\newtheorem{rem}{Remark}
\newtheorem{thm}{Theorem}[section]
\newtheorem{prop}[thm]{Proposition}
\newtheorem{prob}[thm]{Problem}
\newtheorem{cor}[thm]{Corollary}
\newtheorem{conj}[thm]{Conjecture}
\newtheorem{qn}[thm]{Question}
\newtheorem{lem}[thm]{Lemma}
\theoremstyle{definition}
\newtheorem{defn}[thm]{Definition}
\newcommand{\Sym}{\textrm{Sym}}
\newcommand{\supp}{\textrm{supp}}
\newcommand{\Spec}{\textrm{Spec}}
\newcommand{\Sing}{\textrm{Sing}}
\newcommand{\FF}{\mathbb{F}}
\newcommand{\LL}{\mathcal{L}}
\newcommand{\PP}{\mathbb{P}}
\newcommand{\OO}{\mathcal{O}}
\newcommand{\M}{\mathcal{M}}
\newcommand{\A}{\mathbb{A}}
\newcommand{\Lb}{\mathbb{L}}
\newcommand{\ZZ}{\mathbb{Z}}
\newcommand{\QQ}{\mathbb{Q}}
\newcommand{\CC}{\mathbb{C}}
\newcommand{\RR}{\mathbb{R}}
\newcommand{\Fcal}{\mathcal{F}}
\title{Error terms for the motives of discriminant complements and a Cayley-Bacharach theorem}
\author{Ishan Banerjee }
\begin{document}

\maketitle
\begin{abstract}
    In this paper we prove under some simplifying hypotheses questions of Picoco and Levinson-Ullery on Cayley-Bacharach sets. Our results imply that, under suitable hypotheses Cayley-Bacharach sets lie on curves of low degree. We then use these results to estimate error terms to the normalized motive of the space of smooth degree $d$ hypersurfaces in $\PP^n$ as $d$ grows to infinity. The error term can be expressed in terms of a certain `sum over points' on plane cubic curves and the associated Hodge structure can be expressed in terms of  the cohomology of the moduli space of elliptic curves. We also prove convergence of  the motive of degree $d$ hypersurfaces in $\PP^n$ as $n$ grows to infinity as well as other results on discriminant complements of high dimensional varieties.
\end{abstract}
\section{Introduction}
Let $X$ be a smooth projective variety over a field $\FF$. Let $\LL$ be an ample line bundle on $X$. Let $$\Sigma (X,\LL) := \{f \in H^0(X, \LL) | \exists p \in X, f(p) =0, df(p) =0\}.$$ We call $\Sigma(X, \LL)$ the discriminant variety associated to $X$ and $\LL$, it consists of those sections of $\LL$ which define a singular hypersurface in $X$. Let $$U(X, \LL) = H^0(X, \LL) \setminus \Sigma(X, \LL).$$ Theorems of Vakil-Wood and Poonen mply that after some suitable normalization, the motive of $U(X, \LL^j)$ converges (in an appropriate sense) as $j \to \infty$ (see \cite{VW} and \cite{Po}).

In this paper we study the \emph{rate of convergence} of the motive of $U(X, \LL^j)$.
Our main contributions in this regard are:
\begin{enumerate}
    \item An improvement of known rates of convergence for all $X$ and $\LL$ (Theorem \ref{mainrate}). Interestingly enough, our results suggest that the rate of convergence grows faster with the increase in the dimension of $X$, as in the upper bounds  we establish on the `error term' decrease as $\dim X \to \infty$.
    \item  In the special case of $\PP^n$, we obtain a leading error term  (see Theorem \ref{mainPn} ) for the convergence of $U(\PP^n, \OO(d))$ as $d \to \infty$.
    \item We establish upper bounds on the difference between the motive of $U(X, \OO(d))$ and a special value of the motivic zeta function of $X$ for all smooth projective varieties $X$ (see Theorem \ref{maindim}). Our bound depends only on the dimension of $X$. Our bounds improve as the dimension of $X$ increases.
    As a consequence we establish  (for $d \ge 3$ ) that after a suitable normalisation, the motive of $U(\PP^n, \OO(d))$ converges to a certain limit of motivic zeta functions as $n \to \infty$ (see Theorem \ref{maindim}). It is important to note here that we do not assume that $d \to \infty$ or take some kind of limit in which the line bundle $\LL$ becomes more and more ample.

\end{enumerate}
In order to establish the above results, we prove several theorems about Cayley Bacharach sets in $\PP^n$ which are of independent interest (the definition of Cayley-Bacharach sets will be recalled in Section 2). Our results regarding Cayley-Bacharach sets are as follows:
\begin{enumerate}
    \item We show that under certain hypotheses, Cayley-Bacharach subsets of $\PP^n$ are forced to lie on curves of low degree (see Theorem \ref{mainCB} ). This is partial progress on a question of Picoco(see Question 1.1 of \cite{Pi} )
    \item We establish Conjecture 1.2 of \cite{LU} under certain additional hypotheses (Theorem \ref{conjLU} ). The conjecture states that a  subset $Z$ of $\PP^n$ that is Cayley-Bacharach for $\OO(r)$, that satisfies $|Z|\le (d+1) r +1$ lies on a plane configuration of dimension $d$.
\end{enumerate}
\subsection{Grothendieck ring of varieties}
In this section, will briefly discuss the Grothendieck ring of varieties. We largely follow section 1 of \cite{VW} where the Grothendieck ring of varieties is discussed in more detail.

The Grothendieck ring  of varieties $\M$ over the field $\FF$ (also denoted $K_0(Var_{\FF})$) is defined  as follows, in terms of generators and relations.
\begin{enumerate}
    \item Given a finite type $\FF$ scheme $X$, we have an element $[X] \in \M$. Isomorphic schemes define the same element of $\M$.
    \item The collection of all $[X]$ generate $\M$ as an abelian group.
    \item Given any finite type scheme $X$  and a closed subscheme $Y$ with complement $U = X \setminus Y$, we have the relation $$[X] = [U] + [Y].$$
    \item Given $[X], [Y] \in \M$ we define $[X][Y] = [X \times Y]$. This gives $\M$ the structure of a ring.
\end{enumerate}

Let $\Lb = [\A^1].$ Let $\M_{\Lb} = \M [\Lb^{-1}]$. There is an increasing filtration on $\M$, with varieties of dimension $\le \ell$, generating the $\ell$th piece of the filtration. We will call this filtration the dimensional filtration on $\M$. The filtration  extends to a filtration on $\M_{\Lb}$. We denote the completion of $\M_{\Lb}$ with respect to this filtration by $\hat{\M_{\Lb}}$.

\begin{rem}
  In this paper we will often have formulae where an element $x \in \M$ is multiplied by a vector space over the base field $\FF$. These formulae are to be understood in the following sense- we identify a vector space $V$ with $\Lb^{\dim V}$. Any expression of the form $xV$ where $x \in \M$ is simply shorthand for  $x \Lb^{\dim V}$. Similarly an expression of the form $xV^{-1}$ is understood to mean $x \Lb^{- \dim V}$. 
\end{rem}

Let us now assume $\FF = \CC$. Let $\M^{Hdg}$ denote the Grothendieck group of mixed Hodge structures . Let $\Lb^{Hdg} = [H^2_c(\A^1, \QQ)]$. We define $\M_{\Lb}^{Hdg} = \M^{Hdg}[(\Lb^{Hdg})^{-1}].$ There is an increasing filtration on $\M^{Hdg}$ where the $\ell$th piece of the filtration is generated by Hodge structures of weight $\le \ell$. The filtration extends to a filtration on $\M_{\Lb}^{Hdg}$. We denote the completion of $\M_{\Lb}^{Hdg}$  with respect to this filtration $\hat{\M_{\Lb}}^{Hdg}$

 There is a specialisation map $ \phi: \M \to \M^{Hdg}$ defined on smooth proper generators by $[X] \mapsto \sum_k (-1)^k H^k_c(X, \QQ).$ The map extends to a map $\M_{\Lb} \to \M_{\Lb}^{Hdg}$ and a map $\hat{\M_{\Lb}} \to  \hat {\M_{\Lb}}^{Hdg}.$ 

There is a further specialisation of the map  $\hat{\M_{\Lb}} \to \hat{\M_{\Lb}}^{Hdg}$ to a map $$e : \hat{\M_{\Lb}} \to \ZZ [x,y] [[x^{-1}, y^{-1}]]$$ defined on generators by $$e(X) = \sum_{(p,q) \in \mathbb{N}^2}\sum_{k \in \mathbb{N}} (-1)^k h^{p,q} (H^k_c (X, \QQ)) x^py^q.$$ Here $h^{p,q}$  denotes the $(p,q)^{th}$ Hodge number of the mixed Hodge structure $H^k_c (X, \QQ)$. We will call the polynomial $e(X)$ the Serre polynomial of $X$.

\subsubsection*{Motivic zeta function}

Let $X$ be a quasi-projective variety over $\FF$. For $n >0$  we define $\Sym^n X$ to be the $n^{th}$ symmetric power of $X.$ For $n =0$, we set $\Sym^0 X$ to be $\Spec \FF$. 

We define the Kapranov motivic zeta function of $X$, $Z_X(t) \in \M_{\Lb} [[t]]$ by $$Z_X(t) = \sum_{k =0}^{\infty} [\Sym^k X] t^k.$$ We define $$\zeta_X (s) = Z_X (\Lb^{-s}).$$ For $s > \dim X$, $\zeta_x(s) \in \hat \M_{\LL}.$

\subsubsection{A note on brackets}
In this paper we will deal with many large formulae in the Grothendieck ring. As a result we have decided to omit the square brackets in our formulae when referring to the class of a variety, i.e. we will refer to $[X]$ as $X$. This is because keeping the brackets in the notation would lead to a huge number of brackets and be quite difficult to parse.
\subsection{Meta problems for convergence}
In this subsection we describe several meta problems about convergence of a sequence in the completion of a filtered ring. Theorems \ref{mainrate}, \ref{mainPn}, and \ref{maindim} will be instances of these metaproblems. This subsection is in a strict sense not essential to the rest of the paper and the main results of the paper will still be completely understandable without this subsection. However we include this section for motivation and to place some of our results in a wider context.

Let $R$ be a ring with an increasing  bi-infinite filtration $F^l$. Assume that $\cap_{\ell \in \ZZ} F^{\ell} R = 0, \cup_{\ell \in \ZZ} F^{\ell}R =R$. Let $\hat R$ be the completion of $R$ with respect to the filtration. 

Now we define a list of meta problems regarding convergence in $\hat R$ in increasing order of difficulty.
\begin{prob}
Let $((x_n))$ be a sequence in $\hat R$. Show that $\lim _{n \to \infty} x_n$ exists. 
\end{prob}

\begin{prob}
Let $((x_n))$ be a sequence in $\hat R$. Show that $\lim_{n \to \infty} x_n$ exists. Give an explicit description of the limit.
\end{prob}

\begin{prob}
Let $((x_n))$ be a sequence in $\hat R$. Show that $\lim_{n \to \infty} x_n = L$. 
Find $\ell_n \in \ZZ, \ell_n \to \infty$ such that $x_n - L \in F^{-\ell_n} 
\hat R$.
\end{prob}

\begin{prob}
Let $((x_n))$ be a sequence in $\hat R$. Show that $\lim_{n \to \infty} x_n = L$. Find $\ell_n \in \ZZ, \ell_n \to \infty$ such that $x_n - L \in F^{-\ell_n} 
\hat R$ and for $n \gg 0$ $x_n - L \not \in F^{-\ell_n +1} \hat R$.
\end{prob}

One could just as well propose a variant of Problem 4  where we  require $x_n - L \not \in F^{-\ell_n +C} \hat R $ where $C \in \RR$ is some constant not depending on $n$. 
\begin{prob}
Let $((x_n))$ be a sequence in $\hat R$ such that $\lim_{n \to \infty}x_n =L$ Construct a sequence $y_n$ such that $x_n -L = y_n + z_n$, where $z_n$ is much smaller than $y_n$.
\end{prob}

Of course Problem 5 as stated is quite imprecise and one would need to decide what it means for $z_n$ to be much smaller than $y_n$. For example, if the $y_n$ we construct are all units one could ask that, $\lim_{n \to \infty}\frac{z_n}{y_n} =0$.

\subsection{Statement of results on convergence}
We will adopt the following  notation:

\begin{enumerate}
\item Let $X$ be a scheme over a field $\FF$. For a closed subscheme $Y \subseteq X$ we denote the ideal sheaf of $Y$ by $I_Y$.
\item Let $\Fcal$ be a coherent sheaf on $X$. We define $h^i(X, \Fcal):= \dim _{\FF} H^i(X, \Fcal).$
\item We define $N_X^*$ to be the conormal sheaf of $Y$ in $X$ it is the pullback of the sheaf $I_Y$ to $Y$.
\item Given $X$ and $Y$ as above we define $N_X(Y)$ to be the subscheme of $X$ defined by the sheaf $I^2_Y$, i.e. the first infinitesmal neighbourhood of $Y$.

\end{enumerate}

Given $x,y \in \hat \M_{\Lb}$, we will say $x=y$ up to dimension $r$ if $x-y$ lies in the $F^{r}\hat \M_{\Lb}$, where $F^{r}$ refers to the dimension filtration on $\hat \M_{\Lb}$.

Let $X$ be a smooth variety over a field $\FF$, $\LL$ an ample  line bundle bundle on $X$. Then Proposition 3.7 of \cite{VW} states that 

$$ \lim_{d \to \infty} \frac{U(X, \LL^d)}{H^0(X, \LL^d)} = \zeta_X^{-1}(\dim X+1).$$
This can be seen as as an instance of Problem 1. However the proof of Proposition 3.7 in \cite{VW} actually gives more. Let $m(d)$ be the largest integer such that for all zero dimensional reduced subschemes $Z \subseteq X$ of length $ \le m(d)$, the map $H^0(X, \LL^d) \to H^0(N_X(Z) ,\LL^d)$ is surjective. Then the proof of Proposition 3.7 in \cite{VW} actually implies that  $$\frac{U(X, \LL^d)}{H^0(X, \LL^d)} - \zeta_X^{-1}(n+1)$$ lies in $ F^{-m(d)} \hat {\M_{\LL}}$. We improve this result to the following:
\begin{thm}\label{mainrate}
Let $X, \LL, m(d)$ be as above.  Then there exists $C> 0$ such that for all $d \gg 0$, $$\frac{U(X, \LL^d)}{H^0(X, \LL^d)} = \zeta_X^{-1}(n+1)$$ up to dimension  $-  m(d)(\dim X) + C$.
\end{thm}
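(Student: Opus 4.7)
The plan is to refine the Vakil--Wood inclusion--exclusion sieve underlying the proof of their Proposition~3.7, using the Cayley--Bacharach structure theorem (Theorem~\ref{mainCB}) to give a sharper bound on the error contributions from singular loci of large size. Write $V_d = H^0(X, \LL^d)$ and $n = \dim X$, and for a finite reduced subscheme $Z \subset X$ let $W_{Z,d} \subseteq V_d$ be the subspace of sections singular along $Z$, i.e.\ the kernel of $\rho_Z : V_d \to H^0(N_X(Z), \LL^d)$. Following Vakil--Wood, inclusion--exclusion expresses $U(X, \LL^d)/V_d$ as a motivic integral over $\bigsqcup_k \Sym^k X$ with integrand $[W_{Z,d}]/V_d$; when $|Z| = k \le m(d)$ the map $\rho_Z$ is surjective and the integrand is uniformly $\Lb^{-k(n+1)}$, and the formal resummation of these universal contributions over all $k$ produces $\zeta_X^{-1}(n+1)$. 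Hence the discrepancy $\frac{U(X,\LL^d)}{V_d} - \zeta_X^{-1}(n+1)$ is concentrated on the loci $B_k \subseteq \Sym^k X$ with $k > m(d)$ where $\rho_Z$ fails to be surjective.

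The key observation is that failure of surjectivity of $\rho_Z$ is, by duality, precisely the condition that $Z$ be Cayley--Bacharach for $\LL^d$ in the first-order sense appropriate to the singular-locus setup. Applying Theorem~\ref{mainCB} forces such $Z$ to lie on curves in $X$ of bounded degree, whose parameter space has dimension bounded independently of $d$ and $k$. This yields a uniform estimate $\dim B_k \le k + C_0$ with $C_0$ depending only on $X$ and $\LL$. A parallel argument using the same structural input bounds the cokernel dimension $e(Z)$ of $\rho_Z$ by a constant $C_1$, so that $\dim [W_{Z,d}] \le h^0(X, \LL^d) - k(n+1) + C_1$ uniformly on $B_k$.

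Combining the two bounds, the contribution of $B_k$ to the normalised expression has dimension at most $(k + C_0) - k(n+1) + C_1 = -kn + (C_0 + C_1)$. Summing over $k > m(d)$ and completing in the dimension filtration places the total error in $F^{-m(d) n + C}$, which is the claim. The main obstacle is establishing the two uniform estimates $\dim B_k \le k + O(1)$ and $e(Z) \le O(1)$ with constants independent of $d$ and $k$; this relies crucially on the quantitative form of Theorem~\ref{mainCB}, which supplies CB sets lying on curves of a fixed bounded degree whose parameter family has bounded dimension. A weaker structural result (for instance, one whose degree bound grew with $|Z|$) would yield at best $-m(d) n + o(m(d))$ rather than the sharp $-m(d) n + O(1)$ claimed here.
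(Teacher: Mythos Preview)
Your overall strategy---refine the Vakil--Wood sieve using Theorem~\ref{mainCB}---matches the paper, but two of your key estimates are false as stated, and without them the argument does not close.

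First, the claim that the cokernel dimension $e(Z)$ is bounded by a constant $C_1$ independent of $d$ and $|Z|$ is incorrect. If $Z$ consists of $k$ points on a line, then $h^1(I_Z^2(d))$ is of order $2k - d$, which grows linearly with $k$. Second, the claim $\dim B_k \le k + C_0$ is also wrong: failure of surjectivity of $\rho_Z$ does not mean $Z$ itself is Cayley--Bacharach, only that $Z$ \emph{contains} a Cayley--Bacharach subset $Z'$ for $\OO(\lceil d/2\rceil)$ (this is Proposition~\ref{d/2}). Theorem~\ref{mainCB} then forces $Z'$ onto a low-degree curve, but the remaining points $Z\setminus Z'$ move freely in $X$, so $\dim B_k$ is of order $n|Z\setminus Z'| + |Z'| + O(1)$, not $k + O(1)$.

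What actually happens is a trade-off: a large cokernel $l$ forces many points of $Z$ onto a curve, which in turn reduces the dimension of the configuration space. The paper makes this precise by stratifying $w_\lambda$ into pieces $w_\lambda^l$ according to the exact value $l$ of the cokernel, and then bounding $\dim w_\lambda^l \le |\lambda|\,n - (n-1)\psi_{e,d}(l) + K$, where $\psi_{e,d}(l)$ is the minimum number of points on a degree-$e$ curve needed to produce cokernel $l$ (Propositions~\ref{h1bound} and~\ref{boundformainrate}, resting on Lemma~\ref{contcurve}). The fibre of $W_{\ge\lambda}$ over $w_\lambda^l$ has dimension $h^0 - |\lambda|(n+1) + l$, and the point is that the gain of $l$ in the fibre is outweighed by the loss of $(n-1)\psi_{e,d}(l)$ in the base. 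Your proposal replaces this balance by two separate uniform bounds, neither of which holds; the correct argument genuinely requires coupling them.
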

\begin{rem}
  The constant $C$ in the above theorem depends on $X$ and $\LL$, it is related to the dimensions of the  spaces of curves in $X$ such that the line bundle $\LL$ restricts to a low degree. However, it is fairly straightforward to adapt the proof to obtain a constant $C'$, satisfying $C < C'$ where $C'$ depends only on the dimension of $X$.  
\end{rem}

We note that the improvement is greater for varieties of high dimension. This motivates us to study whether there are bounds on the difference 
$$\frac{U(X, \LL^d)}{H^0(X, \LL^d)} - \zeta_X^{-1}(\dim X+1)$$ that improve with the dimension of $X$ and are valid for small values of $d$. 

\begin{thm} \label{maindim}
Let $X$ be a smooth projective variety, let $\LL$ be a \emph{very} ample line bundle. Then for any $d \ge 3$,

$$ \frac{U(X, \LL^d)}{H^0(X, \LL^d)} = \zeta_X^{-1}(\dim X +1) ,$$ up to dimension $\sqrt{\dim X} /3$.
\end{thm}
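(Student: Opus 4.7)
The plan is to revisit the Vakil-Wood sieve expression for the discriminant complement, but to bound the error term using the Cayley-Bacharach results proved earlier in the paper (Theorems \ref{mainCB} and \ref{conjLU}) rather than the coarser regularity parameter $m(d)$. First I would write, by inclusion-exclusion over the loci $\Sigma_p = \{f \in H^0(X, \LL^d) : f(p) = df(p) = 0\}$,
\[
\frac{U(X, \LL^d)}{H^0(X, \LL^d)} = \sum_{k \ge 0} (-1)^k \frac{F_k}{H^0(X, \LL^d)},
\]
where $F_k$ parametrises pairs $(Z, f)$ with $Z$ a reduced length-$k$ subscheme of $X$ and $f \in H^0(X, I_Z^2 \otimes \LL^d)$. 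When the first-order evaluation map $H^0(X, \LL^d) \to H^0(N_X(Z), \LL^d)$ is surjective, the $k^{\text{th}}$ term collapses to $\Sym^k X \cdot \Lb^{-(\dim X +1)k}$, and reassembling these contributions produces the formal identity $\zeta_X^{-1}(\dim X +1)$.

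The discrepancy $\frac{U(X, \LL^d)}{H^0(X, \LL^d)} - \zeta_X^{-1}(\dim X +1)$ is therefore supported on the subloci of $\Sym^k X$ where the evaluation map fails to be surjective. A standard cohomological manipulation identifies the failure of surjectivity with a Cayley-Bacharach condition on $Z$ with respect to $\LL^d$ (up to a twist by $\omega_X$). Because $\LL$ is very ample, the embedding $X \hookrightarrow \PP^N = \PP(H^0(X, \LL)^*)$ lets us transport the $\LL^d$-Cayley-Bacharach condition on $X$ into the $\OO_{\PP^N}(d)$-Cayley-Bacharach condition on $\PP^N$, where Theorem \ref{conjLU} applies: any such $Z$ of length $\le (\delta + 1)d + 1$ is forced to lie on a plane configuration of dimension $\delta$ in $\PP^N$.

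The theorem now reduces to a dimension count. The family of length-$k$ subschemes of $X$ lying on some $\delta$-dimensional plane configuration of $\PP^N$ contributes, in the Grothendieck ring, an element of dimension at most $\delta k$ plus the dimension of the moduli of such configurations (which is of order $\delta N$). Multiplying by the sieve factor $\Lb^{-(\dim X +1)k}$, the contribution to the $k^{\text{th}}$ error lies in dimension at most $(\delta - \dim X - 1) k + O(\delta N)$. The Cayley-Bacharach input of Theorem \ref{conjLU} forces $\delta \gtrsim (k-1)/d - 1$ in the failure regime; with $d \ge 3$ a quick optimisation over $k$ and $\delta$ shows that the worst case occurs near $k \sim \sqrt{\dim X}$, and a careful analysis of the constants yields the bound of order $\sqrt{\dim X}/3$.

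The main obstacle is the last optimisation: one has to track three interrelated parameters (the length $k$, the configuration dimension $\delta$, and the embedding dimension $N$), and show that the moduli of plane configurations meeting $X$ non-trivially, once intersected with $\Sym^k X$, contributes no more than allowed. The hypothesis $d \ge 3$ enters essentially here: it is what guarantees that Theorem \ref{conjLU} delivers a Cayley-Bacharach input strong enough to keep the ratio $k/\delta$ bounded, so that the $1/3$ constant survives the balance. The case $d = 2$ would require the Cayley-Bacharach hypothesis in a stronger form than is currently available and lies outside the scope of this argument.
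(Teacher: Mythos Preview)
Your approach has a genuine gap: you invoke Theorems \ref{mainCB} and \ref{conjLU}, but both of those results carry the hypothesis $d \gg e$ (respectively $d \gg \delta$ in your notation). In Theorem \ref{maindim} the degree $d \ge 3$ is fixed while $\dim X$ grows, so any optimisation that lets the configuration dimension $\delta$ grow with $k \sim \sqrt{\dim X}$ puts you outside the range of validity of those theorems. The paper is explicit about this: the proof of Theorem \ref{maindim} is declared independent of Sections 2 and 3 and uses no Cayley--Bacharach results whatsoever. There is also a secondary issue in your reduction: the failure of surjectivity of $H^0(X,\LL^d)\to H^0(N_X(Z),\LL^d)$ is a condition on $1$-jets, and converting this into a genuine Cayley--Bacharach condition on $Z$ (a $0$-jet statement) is not automatic for general $X$; the paper only does this for $\PP^n$ via \cite{GGP} (Proposition \ref{d/2}), and even then at the cost of halving $d$.

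What the paper actually does is much more elementary. Using the very ample embedding $X\hookrightarrow \PP^N$, it shows by a direct polynomial computation (Propositions \ref{affPn}, \ref{linindPm}, \ref{affX}) that if $Z\subseteq X$ has projective hull of dimension $m$ and $|Z|\ge m+2$, then the $1$-jet conditions at $Z$ impose at least $(m+1)(\dim X+1)+\dim X-1$ independent linear conditions on $H^0(X,\OO(d))$, for any $d\ge 3$. Consequently, any $Z\in w_\lambda^l(X)$ with $l\ge 1$ must have projective hull of dimension $m\le |\lambda|-2$, hence lies in an $m$-plane; the locus of such $Z$ has dimension at most $(m+1)\dim X + (|\lambda|-m-1)m$. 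Plugging this into the Vakil--Wood sieve and optimising over $m$ (Proposition \ref{bounddimbigdim}) gives the bound $-\dim X + 1 + |\lambda|^2$, and cutting off at $|\lambda|\le N=\sqrt{\dim X}/3$ yields the stated error of dimension $\le -\sqrt{\dim X}/3$. The role of $d\ge 3$ is purely to make the polynomial count in Proposition \ref{affPn} go through, not to feed any asymptotic Cayley--Bacharach statement.
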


As a corollary we have:
\begin{cor}\label{dimPn}
Let $X_n$ be a sequence of smooth projective varieties such that $\dim X_n \to \infty$ and $\lim_{n \to \infty}\zeta_X^{-1}(\dim X_n+1)$ exists. Let $\LL_n$ be a very ample line bundle on $X_n$. Let $d_n \ge 3$ be a sequence of positive integers. Then $$\lim_{n \to \infty} \frac{U(X_n, \LL_n^{d_n})}{H^0(X_n, \LL_n^{d_n})} = \lim_{n \to \infty}\zeta_{X_n}^{-1}(\dim X_n+1).$$

In particular $$\lim_{n \to \infty} \frac{U(\PP^n, \OO(d_n))}{H^0(\PP^n, \OO(d_n))} = \prod_{k=1}^{\infty}(1 - \Lb^{-k}).$$
\end{cor}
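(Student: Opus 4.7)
The plan is to obtain this as an essentially immediate consequence of Theorem \ref{maindim}, followed by a direct computation of $\lim_n \zeta_{\PP^n}^{-1}(n+1)$.

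For the first assertion, I would apply Theorem \ref{maindim} to each triple $(X_n, \LL_n, d_n)$. Since $\LL_n$ is very ample and $d_n \ge 3$, the theorem yields
$$\frac{U(X_n, \LL_n^{d_n})}{H^0(X_n, \LL_n^{d_n})} - \zeta_{X_n}^{-1}(\dim X_n + 1) \in F^{-\sqrt{\dim X_n}/3}\hat{\M_{\Lb}}.$$
As $\dim X_n \to \infty$, this error tends to $0$ in the completed ring. Combined with the hypothesis that $\lim_n \zeta_{X_n}^{-1}(\dim X_n + 1)$ exists, the standard fact that ``$a_n - b_n \to 0$ and $b_n \to L$ together imply $a_n \to L$'' gives the first equation.

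For the second assertion I would specialize to $X_n = \PP^n$ and $\LL_n = \OO(1)$, which is very ample, and then identify the limit explicitly. Using Kapranov's product formula
$$Z_{\PP^n}(t) = \prod_{i=0}^n \frac{1}{1 - \Lb^i t}$$
(a standard computation, used already in \cite{VW}), one obtains
$$\zeta_{\PP^n}^{-1}(n+1) = \prod_{i=0}^n \bigl(1 - \Lb^{i - n - 1}\bigr) = \prod_{k=1}^{n+1}\bigl(1 - \Lb^{-k}\bigr)$$
after reindexing by $k = n+1-i$. Each factor $1 - \Lb^{-k}$ differs from $1$ by an element of $F^{-k}\hat{\M_{\Lb}}$, so the partial products are Cauchy in $\hat{\M_{\Lb}}$ and converge to $\prod_{k=1}^{\infty}(1 - \Lb^{-k})$. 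This simultaneously verifies the existence hypothesis needed to invoke the first part and identifies the limit.

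There is no substantive obstacle to this argument: the corollary is essentially a packaging of Theorem \ref{maindim} as a limit statement, together with a routine zeta function computation for projective space. The only input beyond Theorem \ref{maindim} is Kapranov's product formula for $Z_{\PP^n}(t)$, which is classical and can be proved directly from $[\Sym^n \PP^1] = [\PP^n]$ together with standard manipulations in $\hat{\M_{\Lb}}$.
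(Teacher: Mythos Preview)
Your proposal is correct and matches the paper's treatment: the corollary is stated without proof immediately after Theorem \ref{maindim}, so the paper regards it as following exactly by the argument you give (apply Theorem \ref{maindim} termwise and then compute $\zeta_{\PP^n}^{-1}(n+1)$ via Kapranov's product formula). There is nothing to add.
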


Finally we identify the leading error term for the sequence $\frac{U(\PP^n, \OO(d))}{H^0(\PP^n, \OO(d))}$, where $n$ is fixed and $d$  goes to infinity, providing a complete answer to Problem 5 in this instance.

\begin{thm} \label{mainPn}
Let $n \ge 2$.  Then for  $d \gg n$, there exists $\epsilon >0$. 
 $$\frac{U(\PP^n, \OO(d))}{H^0(\PP^n, \OO(d))} - Z_{\PP^n}^{-1}(n+1) = Y(d) + Z(d) $$ where: 
\begin{enumerate}
    \item $\dim Z(d) \le -(\frac{3n}{2} + \epsilon)d $.
    \item For $d \equiv 1 \mod{4}$ , the virtual Hodge structure of $Y(d)$ has highest weight term $E_{k }  (\Lb^{Hdg})^{n^2 - k(n+1) +1 }$ in $K_0(MHS),$ where $E_{d}$ is the highest weight term of the virtual Hodge structure corresponding to $H^1(\M_{1,1}, H_k)$. Here $\M_{1,1}$ is the moduli space of elliptic curves, $H_k$ is the Hodge bundle corresponding to $k^{th}$ tensor power of the relative canonical bundle and $k = \frac{3d+1}{2}$.
\end{enumerate}
 $Y(d)$ is explicitly defined in Section 6.
\end{thm}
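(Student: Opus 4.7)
The plan is to refine the Poonen--Vakil--Wood sieve for $U(\PP^n, \OO(d))$ so as to identify its leading error term rather than merely bound it. Expressing $U(\PP^n, \OO(d))$ via inclusion-exclusion stratified by the singular scheme of a section, the difference $\frac{U(\PP^n,\OO(d))}{H^0(\PP^n,\OO(d))} - Z_{\PP^n}^{-1}(n+1)$ becomes a sum, indexed by finite subschemes $Z \subset \PP^n$, whose terms are nontrivial exactly when the restriction $H^0(\PP^n,\OO(d)) \to H^0(N_{\PP^n}(Z), \OO(d))$ fails to be surjective. This is the Cayley-Bacharach condition driving Theorem \ref{mainrate}; the new input here is that Theorem \ref{conjLU} lets us classify such defective $Z$ by the dimension of the smallest plane configuration containing them.

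I would then rank the strata by increasing plane-dimension of the containing configuration. Subsets on a line or a conic are rational in moduli and so produce only low-weight contributions, which fall into $Z(d)$. The first genuinely Hodge-theoretic stratum is $Z$ lying on a plane cubic $C \subset \PP^n$. The critical cardinality $k = (3d+1)/2$ is exactly the value for which Riemann--Roch on $C$ together with the classical Cayley--Bacharach theorem for plane cubics forces the cokernel of the restriction map to be of the expected positive dimension; the congruence $d \equiv 1 \pmod 4$ is imposed to guarantee both that $k$ is an integer and that it lies in a residue class for which the relevant local system on $\M_{1,1}$ has nonvanishing odd cohomology (other residues either produce vanishing top terms or smaller Hodge pieces absorbed into $Z(d)$).

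Next I would parameterize pairs $(C, Z)$ globally as a fibration over $\M_{1,1}$. A smooth plane cubic in $\PP^n$ is specified by a $\PP^2 \subset \PP^n$, i.e.\ a point of the Grassmannian $G(3, n+1)$, together with a cubic in that $\PP^2$; the isomorphism class of $C$ is then governed by $\M_{1,1}$. The choice of $Z \subset C$ of cardinality $k$ is parameterized by the relative $k$th symmetric power of the universal elliptic curve, which by Abel--Jacobi is motivically a projective bundle whose pushforward to $\M_{1,1}$ assembles out of the Hodge bundles $H_j$ for $j \le k$. Taking the top Hodge-weight piece of this pushforward, a motivic Eichler--Shimura argument singles out $H^1(\M_{1,1}, H_k)$; the $(\Lb^{Hdg})^{n^2 - k(n+1) + 1}$ twist records the sum of the Grassmannian dimension and the codimension $k(n+1)$ of the double-point conditions at $Z$ inside $H^0(\PP^n, \OO(d))$, both of which are constant across the $\M_{1,1}$-fibration. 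This computation defines $Y(d)$ and furnishes the stated top-weight term $E_k (\Lb^{Hdg})^{n^2 - k(n+1) + 1}$.

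The main obstacle is the third step: executing a motivic Eichler--Shimura carefully enough to isolate $H^1(\M_{1,1}, H_k)$ as the top-weight piece of the pushforward of the relative symmetric power, rather than merely as a summand in a larger virtual Hodge structure. Once this is in place, the bound $\dim Z(d) \le -(3n/2 + \epsilon)d$ follows by combining three estimates: the lower plane-dimension strata on lines and conics are rational and contribute at strictly lower weight; the higher plane-dimension strata are controlled by the dimension bound of Theorem \ref{conjLU}; and the remaining pieces of the plane-cubic stratum, coming from non-critical cardinalities, degenerate cubics, or subleading Hodge pieces of $H^*(\M_{1,1}, H_k)$, each fall below the threshold by an amount uniform in $d$, yielding the $\epsilon$.
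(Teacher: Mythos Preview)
Your outline has the right endpoint---the leading error comes from configurations on smooth plane cubics and its top Hodge piece is governed by $H^1(\M_{1,1}, H_k)$---but the route you propose has two genuine gaps.

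First, the stratification. You invoke Theorem~\ref{conjLU} to organize defective $Z$ by the dimension of the containing plane configuration, but that invariant does not separate the relevant strata: a smooth plane cubic, a conic plus a secant line, and three coplanar lines all sit in a $\PP^2$, yet only the first contributes to $Y(d)$. The paper instead uses Theorem~\ref{mainCB} (via Lemma~\ref{contcurve}) to attach to each defective $Z$ a canonical low-degree \emph{curve} $C$ with $h^1(I_Z^2(d))=h^1(I_{Z\cap C}^2(d))$, and then stratifies by the isomorphism type of $C$ inside $\mathrm{Chow}(r)$ for $r\le 3$. This is the decomposition that actually isolates the smooth plane cubics.

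Second, and more seriously, your reason for discarding the line/conic strata is wrong. You say they are ``rational in moduli and so produce only low-weight contributions,'' but the individual terms $w_\lambda^{M,l}\Lb^{l-|\lambda|(n+1)}$ for $M$ a family of lines or conics are \emph{not} of small dimension; on their own they sit at exactly the same filtration level as the plane-cubic terms. What kills them is a cancellation in the alternating sum $\sum_{|\lambda|=k}(-1)^{||\lambda||}$: by Proposition~\ref{pm} this sum computes $H^*_c$ with coefficients in the sign local system, and Vassiliev's lemma gives $H^*_c(w_k(\PP^1),\pm\QQ)=0$ for $k\ge 3$. Because lines, conics, twisted cubics, and all reducible degree-$3$ curves are built from rational components, their contributions factor through such vanishing groups (Proposition~\ref{lowdeg} runs through nine separate cases). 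The smooth plane cubic is singled out precisely because it has genus $1$, so no such vanishing holds; the relevant $\pm\QQ$-cohomology is then $H^*(\M_{1,k_0},\pm\QQ)$, and the paper reads off its top weight from Getzler's computation rather than from an Eichler--Shimura argument. You will also need a separate analysis of the tail terms $W_{\lambda,\ge N+1}$ (Proposition~\ref{geN0}), which your sketch does not address.
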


\begin{rem}
    $Y(d)$ has an explicit description in terms of configurations of points on smooth plane cubic curves a precise description is given in Section 6 (defined just before Proposition \ref{Yd}) and its virtual Hodge structure may be readily computed from that description using the results of \cite{G}.
\end{rem}

\subsection{ Motivation for our results on convergence}

Our motivation for the results proven above comes from two different sources, homological stability and number theory. We will first discuss some motivation related to homological stability.

\subsubsection{Homological stability}
It was established by \cite{T} that the spaces $U(\PP^n ,\OO(d))$ satisfy homological stability. Later on Aumonier in \cite{A} and Das-Howe in \cite{DH} established homological stability for the sequence of spaces $U(X,\LL^d$. However, it was known (and this is even mentioned in \cite{T}) that the range of homological stability given in \cite{T} is not optimal. It was of interest therefore to see where and how homological stability fails and explain the first class in $H^*(U(\PP^n,\OO(d)), \QQ)$ that is not stable. 

Related to the above is the desire to establish `secondary homological stability'-like  results for the family of spaces $U(\PP^n, \OO(d)).$ For instance the paper \cite{GKRW} establishes that.

It would be interesting to establish some analogue of these results in the case of $U(\PP^n, \OO(d)),$ however there are several difficulties in trying to do so, first of all there are no maps between different $U(\PP^n, \OO(d))$ for different values of $d$ and secondly the nature of the spaces $U(\PP^n, \OO(d))$ is very different from the kind of spaces people have established `secondary stability' for; secondary stability results are known in the setting of configuration spaces on manifolds and other more topological and less algebraic geometric spaces as compared to discriminant complements. The techniques used to establish such results are much more topological than the ones in this paper.

In the end we were unable to establish such a `secondary stability' theorem for the homology of the spaces $U(\PP^n,\OO(d))$ and this paper does not really deal with results at the level of homology. 

One may in fact combine the techniques of this paper with those of \cite{T} to conclude that $H^k(U(\PP^n,\OO(d), \OO(d)) \cong H^k(GL_{n+1}(\CC), \QQ),$  for $k \le \frac{nd}{2}$. However we do not include this result as we do not believe this is optimal either and we do not how to obtain the optimal answer. In some sense our failure to get such a result is a consequence of some of the differences between cohomology and the Grothendieck ring, in the Grothendieck ring setting, if a variety $X$ admits a stratification with understandable pieces, we understand the class of $[X]$ in the Grothendieck ring. However if a variety $X$ has a stratification with understandable pieces, it is still not clear what the cohomology of $X$ is, the best we can do is obtain a spectral sequence whose $E_1$ page is related to the cohomology of the strata. However it is often not clear in these situations what the differentials in this spectral sequence are.

As a result we were unable to adapt some of the arguments involved in proving Theorem \ref{mainPn} to a cohomological setting. It still may very well be possible to establish a cohomological analogue of Theorem \ref{mainPn} but it would require somewhat different techniques than we have used/ are aware of.

However it is possible to establish that for fixed $ d \ge 3 $, $H^k(U(\PP^n ,\OO(d))) \cong H^k(GL_{n+1}(\CC))$ for $k \le f(n)$ where $f(n)$ is some function of $n$ that grows to $\infty$. This is a consequence of combining the techniques involved in proving Theorem \ref{maindim} with those of \cite{T}. //

We believe it should be possible to get a cohomological analogue of Theorem \ref{maindim} as well.

While we have not established any secondary stability results for the sequence of spaces $U(\PP^n, \OO(d))$, we have established an analogue of secondary stability in the setting of $K_0(Var_{\FF});$ by the results of \cite{VW}, we have $$ \lim_{d \to \infty} \frac{U(\PP^n, \OO(d))}{ H^0(\PP^n, \OO(d))} \to \frac{1}{Z_{\PP^n}(n+1)}$$ Theorem \ref{mainPn} refines this result to give us an estimate of the difference between the limit and the elements of the sequence. Furthermore the proof of Theorem \ref{mainPn} identifies this leading error term as arising from the failure of the proof of \cite{VW} to give an equality between  $\frac{1}{Z_{\PP^n}(n+1)}$ and $\frac{U(\PP^n, \OO(d))}{ H^0(\PP^n, \OO(d))}$; the proof of Theorem 1.13 in \cite{VW} crucially involves the fact that if $Z$ is a collection of points in $X$ whose length is small compared to $d$, then the vanishing of $1$-jets at the points of $Z$ impose linearly independent conditions. This fails when the length of $Z$ is comparable to $d$  and our error term $Y(d)$ is related to (and in fact defined in terms) of the variety parametrizing collections of points where this linear independence just begins to fail.

\subsubsection{Arithmetic statistics}
A second source of motivation comes from number theory, in particular from arithmetic statistics. In many situations in arithmetic statistics, one is interested in establishing asymptotic expressions for the number of some quantity of arithmetic interest. For instance as a prototype, one has Roberts conjecture (established in  \cite{BST}) which says that the number of degree $3$ extensions of $\QQ$ of discriminant $\le X$ is $aX + bX^{\frac{5}{6}} + o(X^{\frac{5}{6}})$. 

We view Theorem \ref{mainPn} as a Theorem of this kind, we have essentially established that the class of the discriminant complement $U(X,\LL)$ is $$H^0(\PP^n ,\OO(d))Z_{\PP^n}^{-1}(n +1) + H^0(\PP^n ,\OO(d)) Y(d) +\textrm{ lower order terms},$$ where lower order is to be understood in terms of the dimension filtration.

A natural question to ask here is if Theorem \ref{mainPn} and \ref{maindim} have consequences with regard to point counting over finite fields. More precisely, one may ask if one has a result of the following form:
Let $\FF = \FF_q$. Let $X ,\LL, d$ be as in Theorem \ref{maindim}.
Is there a bound on 
$$\frac{\# U(X ,\LL^d)}{\#3H^0(X ,\LL^d)} - \# Z_{X}^{-1}(-n -1)
$$ analogous to Theorem \ref{mainPn}?
Does  $$\lim_{n \\to \infty} \# \frac{U(\PP^n ,\OO(d))}{H^0(\PP^n ,\OO(d))}$$ exist and equal $$\prod_{i =1}^{\infty}(1 - \frac{1}{q^i})?$$

Does one have $$\# \frac{(U(\PP^n ,\OO(d)))}{H^0(\PP^n, \OO(d))} = \# Z_{\PP^n}(-n-1) +\# Y(d) + \textrm{ lower order terms}?$$

We believe that the answer to all the above questions is yes and we are currently working on solving the above questions. The reason why point counting formulae like the above do not immediately follow from Theorems \ref{maindim} and \ref{mainPn} is that the functor $\#$ is not continuous with respect to the dimension filtration, varieties of small dimension can have an arbitrary number of points. In addition to this some of the methods we use are not well adapted to the setting of point counts, e.g. the sum over ordered partitions constructions that we use involve a very large number of terms that may cause problems when added up. However we believe that we may combine our methods with that of \cite{Po} to obtain the above results.
\subsection{Cayley-Bacharach sets}
In this section, we will define what Cayley-Bacharach sets are and prove a few basic propositions about them. This may appear to be quite unrelated to the material of the previous section, however it is not as the results of the previous section rely on our results on Cayley-Bacharach sets.

In this section, we assume $\FF$ to be an algebraically closed field. Let $X$ be a projective variety over $\FF$. Let $\LL$ be a line bundle on $X$.

\begin{defn} 
A  zero dimensional reduced subscheme $Z  \subseteq X$ is said to be Cayley-Bacharach for $\LL$ if given $f \in H^0(X, \LL)$, if $f$ vanishes on all but one point of $Z$ then $f$ vanishes on all of $Z$.
\end{defn}

We will now reformulate the above definition. Given any $Z' \subseteq Z \subseteq X$ we have  restriction maps $H^0(X, \LL) \to H^0(Z, \LL) \to H^0(Z',\LL)$. We hence have a map $$\textrm{Im} (H^0(X, \LL) \to H^0(Z, \LL))\to \textrm{Im}(H^0(X, \LL) \to H^0(Z', \LL)).$$

Given a zero dimensional scheme $Z$, we will use $|Z|$ to denote the length of $Z$.

\begin{prop}
    A zero dimensional reduced subscheme $Z \subseteq X$ is  Cayley-Bacharach for $\LL $ iff for all $Z' \subseteq Z$, $|Z \setminus Z'| =1$, the restriction map 
    \begin{equation}   
        \textrm{Im} (H^0(X, \LL) \to H^0(Z, \LL))\to \textrm{Im}(H^0(X, \LL) \to H^0(Z', \LL))
    \end{equation}
 is an isomorphism. 
\end{prop}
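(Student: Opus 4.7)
The plan is to unpack both sides in terms of the kernel of a restriction map. Set $V := \mathrm{Im}(H^0(X,\LL) \to H^0(Z,\LL))$ and $V' := \mathrm{Im}(H^0(X,\LL) \to H^0(Z',\LL))$, and let $\rho: V \to V'$ be the restriction. First I would observe that $\rho$ is automatically surjective, because $V'$ is by construction the image of $H^0(X,\LL)$ in $H^0(Z',\LL)$, and this map factors through $V$. Consequently $\rho$ is an isomorphism if and only if $\ker \rho = 0$.

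Next I would identify $\ker \rho$ concretely. An element of $\ker \rho$ is the restriction to $Z$ of some $f \in H^0(X,\LL)$ with $f|_{Z'} = 0$, and it is zero in $V$ precisely when $f|_Z = 0$. So $\ker \rho = 0$ is the statement: every $f \in H^0(X,\LL)$ vanishing on $Z'$ also vanishes at the unique point $p \in Z \setminus Z'$.

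Now I would compare this with the Cayley-Bacharach condition. For the ``only if'' direction, assume $Z$ is Cayley-Bacharach and fix $Z' \subseteq Z$ with $|Z \setminus Z'|=1$; then any $f$ vanishing on $Z'$ vanishes on all but (at most) one point of $Z$, so by the definition it vanishes on all of $Z$, giving $\ker \rho = 0$. For the ``if'' direction, assume each such $\rho$ is an isomorphism, and let $f \in H^0(X,\LL)$ vanish on all but one point of $Z$; taking $Z'$ to be the complement of that remaining point shows $f|_{Z'}=0$, hence $f|_Z = 0$ by injectivity of $\rho$, confirming the Cayley-Bacharach property.

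There is no substantial obstacle here; the statement is essentially a reformulation of the definition, and the only subtlety worth flagging is the automatic surjectivity of $\rho$, which is what lets one reduce ``isomorphism'' to ``injectivity''. I would write the argument as a single short paragraph with the two directions handled in parallel once $\ker \rho$ has been described.
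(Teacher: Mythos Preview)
Your proposal is correct and follows essentially the same approach as the paper's own proof: both note that the map (1) is automatically surjective, reduce ``isomorphism'' to ``trivial kernel,'' and then identify the kernel condition with the Cayley--Bacharach defining property in each direction. Your write-up is in fact a bit more explicit about why surjectivity holds (the factorization of $H^0(X,\LL) \to H^0(Z',\LL)$ through $V$), but otherwise there is no substantive difference.
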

\begin{proof}

    If $Z' \subseteq Z$, then the homomorphism in (1) is always surjective. For it to be to an isomorphism is equivalent therefore to it having zero kernel. But if $f$ is in the kernel of (1), it is an element of $H^0(X, \LL)$  that restricts to $0$ on $Z'$. If $Z$ is Cayley-Bacharach  this implies that $f$ rstricts to $0$ on $Z$ and hence the kernel of the (1) is trivial. Conversely if the kernel of (1)  is trivial, then any $f$ restricting to zero on $Z'$ restricts to $0$ on $Z$, and hence if the kernel of (1) is trivial for all choices of $Z'$, then $Z$ is Cayley-Bacharach for $\LL$.
\end{proof}

\subsection{Results on Cayley-Bacharach sets}
In\cite{Pi}, Picoco asks the following question.

\begin{qn}
Let $e,d >0$. Suppose $Z \subseteq \PP^n$ is a reduced set of points and that $|Z| < e(d-e +3) -1$.
Then is it true that if $Z$ is Cayley-Bacharach for $\OO(d)$, it lies on a curve of degree $< e$?
\end{qn}

While we do not know if Picoco's question has an affirmative answer we have made the following partial result.
\begin{thm} \label{mainCB}
Let $d \gg e \ge 1$.  Let $Z$ be a Cayley Bacharach subset of $\PP^n$ for $\OO(d)$. Then there exists a function $f$, depending  on $n$  but not on $d$ such that if $$|Z| < ed - f(e),$$ $Z$ lies on a curve of degree $\le e$.
\end{thm}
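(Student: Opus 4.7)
I would approach Theorem \ref{mainCB} via induction on $e$, leveraging the \emph{residual Cayley-Bacharach property}: if $Z$ is CB for $\OO(d)$ and $F$ is a form of degree $k$, then $Z \setminus V(F)$ is CB for $\OO(d-k)$. This follows immediately from the definition by multiplying a hypothetical degree $d-k$ form vanishing on $(Z \setminus V(F)) \setminus \{p\}$ by $F$ and applying CB for $\OO(d)$. The strategy is either to find a low-degree hypersurface through $Z$ (and iterate, cutting $\PP^n$ down to a curve) or to split $Z$ along such a hypersurface into two pieces, each inductively known to lie on a curve whose degrees sum to at most $e$.

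For the base case $e=1$, I would show that a CB set $Z$ for $\OO(d)$ with $|Z| < d - f(1)$ must be collinear. The contrapositive is the natural route: if $Z$ contains three non-collinear points, then one constructs a degree $d$ form (e.g., a union of $d$ carefully chosen lines passing through pairs of points of $Z$) vanishing on exactly $|Z|-1$ points of $Z$ and missing the last, contradicting CB. The constant $f(1)$ absorbs the overhead required to guarantee such a union can always be assembled.

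For the inductive step, the crux is controlling $h^0(I_Z(e))$, the dimension of degree $e$ forms vanishing on $Z$. Without CB one needs $|Z| < \binom{e+n}{n}$, which is far weaker than the assumed $|Z| < ed - f(e)$ in the regime $d \gg e$. The CB property should provide the extra slack through a Macaulay/Gorenstein-type duality, pairing the Hilbert function of $Z$ in degree $e$ with that in degree $d-e$ and thereby forcing $h^0(I_Z(e)) > 0$. Given a degree $e$ hypersurface $H$ through $Z$ (or, ideally, of some smaller degree $k$), one writes $Z = (Z \cap H) \sqcup (Z \setminus H)$; the residual $Z \setminus H$ is CB for $\OO(d-k)$ with smaller cardinality, and its containing curve of degree $\le e-k$ (by induction, which reduces $e$ and preserves the inequality since $|Z \setminus H| \le |Z| - |Z\cap H|$), together with a curve in $H$ containing $Z \cap H$, combine into a curve of total degree $\le e$ through $Z$.

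The main obstacle I anticipate is ensuring that the base locus of degree $e$ forms through $Z$ really yields a one-dimensional subscheme of degree $\le e$, rather than a higher-dimensional variety or a curve whose degree exceeds $e$. This demands delicate Hilbert function arguments in the Macaulay/Gotzmann framework, and careful bookkeeping of how the cutting degrees multiply (by Bezout) over the induction to guarantee the final curve stays within degree $e$. The function $f(e)$ collects: the base case constant, a cumulative overhead from each induction step, and geometric quantities such as Castelnuovo–Mumford regularity for degree $\le e$ curves in $\PP^n$ and dimensions of spaces of low-degree forms. None of these involve $d$, which is consistent with the hypothesis $d \gg e$ and the stated dependence of $f$ on $n$ alone.
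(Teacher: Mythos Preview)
Your strategy diverges entirely from the paper's, and the obstacle you flag in your final paragraph is not a technicality but the heart of the matter.

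The paper inducts on the ambient dimension $n$, not on $e$. The base case $n=2$ is the Lopez--Pirola result (Proposition~\ref{proplp}). For the step from $\PP^n$ to $\PP^{n+1}$, one projects $Z$ from two generic points $P_1,P_2$ onto a hyperplane $H\cong\PP^n$; each image $\pi_i(Z)$ remains Cayley--Bacharach for $\OO(d)$ (Proposition~\ref{projCB}), so by induction lies on a curve $C_i\subseteq H$ of degree $\le e$. Then $Z$ is trapped in the intersection of the two cones $S_i$ over $C_i$ with apices $P_i$, which by refined B\'ezout is a curve $C$ together with a finite set $\Gamma$, with $\deg C+|\Gamma|\le e^2$. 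One argues separately that $Z$ avoids $\Gamma$ (Proposition~\ref{curveonly}), and finally a Riemann--Roch computation on the normalisation (Proposition~\ref{curveCB}) forces the minimal curve through $Z$ to have degree $\le e$ rather than merely $\le e^2$.

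Your induction on $e$ has no mechanism for reducing dimension. The residual trick handles $Z\setminus V(F)$, but you give no reason that $Z\cap V(F)$ lies on a curve: it is a finite set on an $(n-1)$-fold, and it is not Cayley--Bacharach for any obvious linear system on $V(F)$. If instead you try your first option and intersect $n-1$ hypersurfaces of degree $\le e$ through $Z$, B\'ezout yields a curve of degree $\le e^{n-1}$, and you still owe an argument bringing this down to $e$; that argument is essentially Proposition~\ref{curveCB}, which is the real content. The Gorenstein/Macaulay duality you invoke gives $h_Z(k)+h_Z(d-k)=|Z|$ when it applies, but for $n\ge 3$ Cayley--Bacharach sets need not be arithmetically Gorenstein, and in any case Hilbert-function information does not by itself force the base locus of $|I_Z(e)|$ to be one-dimensional.
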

In \cite{LU}(Conjecture 1.2 in the paper) Levinson and Ullery made the following conjecture 
\begin{conj}\label{conjL1}
Let $d \ge 1$. Suppose $Z \subseteq \PP^n$ is a Cayley-Bacharach set for $\OO(d)$. Suppose $|Z| \le (e+1)d +1$ Then $Z$ is contained in a union of positive dimensional linear subspaces $L_1, \dots L_k$ of $\PP^n$ such that $\sum_{i=1} ^{k}\dim L_i \le e$.
\end{conj}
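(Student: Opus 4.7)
The plan is to proceed by induction on the parameter $e$, using residuation of Cayley-Bacharach sets along hyperplanes as the main reduction step. The base case is classical: for $e=1$, where the size bound reads $|Z| \le 2d+1$, one shows that such a set must lie on a single line by choosing two points $p_1, p_2 \in Z$, spanning a line $L$, and then using the CB property of $Z$ applied to products of $d-1$ linear forms with a linear form vanishing on $L$ to force all remaining points of $Z$ onto $L$. For larger $e$ the argument assumes the conjecture for all $e' < e$.

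For the inductive step, let $Z \subseteq \PP^n$ be Cayley-Bacharach for $\OO(d)$ with $|Z| \le (e+1)d + 1$; after replacing $\PP^n$ by the linear span of $Z$ we may assume $Z$ is nondegenerate. The central tool is the residuation lemma: if $H \subseteq \PP^n$ is a hyperplane cut out by a linear form $h$, then the residual $Z \setminus (Z \cap H)$ is Cayley-Bacharach for $\OO(d-1)$, since for any $(d-1)$-form $f$ vanishing on all but one point of the residual, the $d$-form $hf$ vanishes on all but one point of $Z$ and the CB property of $Z$ supplies the missing vanishing. I would choose $H$ to maximize $|Z \cap H|$. In the favorable regime $|Z \cap H| \ge ed + 1$ the residual has size at most $d$, which by the base case forces it onto a line $L_1$; the inductive hypothesis applied to $Z \cap H$ inside $H \cong \PP^{n-1}$ with parameter $e-1$ then yields the remaining subspaces $L_2, \ldots, L_k$ of total dimension at most $e-1$, producing the desired configuration. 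A subsidiary technical point is verifying that $Z \cap H$ inherits the CB condition for $\OO(d)|_H$; this does not follow purely formally but can be deduced from the CB condition on $Z$ once one knows the residual points impose independent conditions on $(d-1)$-forms, which holds in the favorable regime because the residual is small.

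The main obstacle, and I believe the source of the ``additional hypotheses'' in the paper's Theorem \ref{conjLU}, is the opposite regime in which no hyperplane captures many points of $Z$. There the slicing procedure reduces neither the inductive parameter $e$ nor the size of the residual enough to continue, and a naive induction breaks down. Handling this general-position case requires finer Hilbert-function estimates: one must argue that if $Z$ is in sufficiently general position and simultaneously fails to lie on any plane configuration of total dimension $\le e$, then $h_Z(d)$ must be large enough to force $|Z| > (e+1)d + 1$, contradicting the hypothesis. Controlling these Hilbert-function inequalities uniformly in $n$ and $d$ is the essential unresolved point. The paper presumably circumvents it by restricting to the range $d \gg e$ and $d \gg n$, in which Theorem \ref{mainCB} and related uniform bounds force $Z$ onto curves of low degree and thence onto linear configurations, or by an explicit nondegeneracy hypothesis on $Z$ that rules out pathological arrangements in which every hyperplane section is small.
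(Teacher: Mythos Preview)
Your residuation-and-induction scheme is not the route the paper takes, and even in your ``favorable regime'' the counting does not close. If $|Z\cap H|\ge ed+1$ then the residual does land on a line, but to apply the inductive hypothesis with parameter $e-1$ to $Z\cap H$ you need $|Z\cap H|\le ed+1$; in general you only have $ed+1\le |Z\cap H|\le (e+1)d+1$, so at best you may invoke parameter $e$, which yields subspaces of total dimension $\le e$ for $Z\cap H$ and then $\le e+1$ once you append the residual line---one too many. On top of this, the inheritance of the Cayley--Bacharach property to $Z\cap H$ is, as you flag, genuinely delicate and is not established by the argument you sketch. So the proposal has a real gap already in the favorable case, quite apart from the unresolved general-position case.

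The paper's proof of Theorem~\ref{conjLU} (the conjecture under $d\gg e$) proceeds along an entirely different line. It first invokes Theorem~\ref{mainCB} to place $Z$ on a reduced curve $C$ of degree $\le e+1$. Since an integral curve of degree $r$ spans a linear subspace of dimension $\le r$, with equality only for a rational normal curve, the only obstruction to the desired linear configuration is when $C$ is a union of rational normal curves $C_1,\dots,C_k$ with $\sum\deg C_i=e+1$. One then shows that any two such components meet in at most one point and that the intersection graph is a forest; using that each $C_i\cong\PP^1$, a direct count gives $|Z\cap C_i|+d(C_i)\ge (\deg C_i)d+2$ (here $d(C_i)$ is the vertex degree in the graph), and summing over $i$ together with the Euler-characteristic inequality for forests forces $|Z|\ge (e+1)d+2$, contradicting the hypothesis. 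Your closing guess that the paper leans on $d\gg e$ and Theorem~\ref{mainCB} is right, but the decisive step is this rational-normal-curve/intersection-graph argument, not a Hilbert-function estimate.
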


\begin{thm}\label{conjLU}
Let $d \gg e$. Then Conjecture \ref{conjL1} is true.
\end{thm}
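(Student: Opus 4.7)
The plan is to deduce Theorem \ref{conjLU} from Theorem \ref{mainCB} together with an inductive peeling argument built on the residual Cayley--Bacharach lemma. First, I would apply Theorem \ref{mainCB} with parameter $e+2$ in place of $e$: since $|Z| \le (e+1)d + 1 < (e+2)d - f(e+2)$ whenever $d$ is large enough relative to $e$ (with the $n$-dependence of $f$ absorbed into the hypothesis $d \gg e$ for fixed $n$), we conclude that $Z$ is contained in some curve $C \subseteq \PP^n$ of degree at most $e+1$. This reduces the problem from a general Cayley--Bacharach set to the much more constrained setting of points on a low-degree curve, where component-wise analysis becomes available.

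Then I would argue by induction on $e$, the base case $e = 1$ being the single-line containment result already established by Levinson and Ullery. For the induction step, decompose $C = C_1 \cup \cdots \cup C_m$ into irreducible components of degrees $d_1, \dots, d_m$ with $\sum d_i \le e+1$. The workhorse is the standard residual Cayley--Bacharach lemma: if $Z = Z_1 \sqcup Z_2$ with $Z_1 \subseteq H$ for a hypersurface $H$ of degree $k$, then $Z_2$ is Cayley--Bacharach for $\OO(d-k)$. Applied with $H$ a hyperplane through a chosen component $C_i$, this peels off the points of $Z$ lying in $H$ and reduces the rest to a smaller Cayley--Bacharach problem on the curve $C \setminus C_i$ of strictly smaller degree, to which induction applies; the linear span of $C_i$ then contributes a single $L_j$ to the final cover.

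The main obstacle is handling irreducible components $C_i$ of degree $d_i \ge 2$, i.e.\ ruling out the possibility that a non-linear component of $C$ can absorb too many points without being bundled into a low-dimensional linear subspace. For such a component, one needs a sharp bound on $|Z \cap C_i|$ in terms of $d_i$, the arithmetic genus, and the dimension of the linear span of $C_i$, combined with the observation that extremal configurations saturating such a bound force $C_i$ to be a rational normal curve of degree $d_i$ inside its linear span of dimension $d_i$, which can then be absorbed into the cover as a single $L_j$ of dimension $d_i$. In the regime $d \gg e$, these bounds are strong enough that components of positive genus can only contribute a subdominant number of points, and a careful dimension count---merging collinear, concurrent, or coplanar components into common higher-dimensional subspaces at zero net cost---keeps $\sum \dim L_j \le e$. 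Verifying this bookkeeping in the extremal borderline cases, where $|Z|$ nearly saturates $(e+1)d+1$ and every unit of dimension matters, is precisely where the hypothesis $d \gg e$ is indispensable.
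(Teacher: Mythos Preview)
Your opening move and your identification of the extremal case are the same as the paper's: apply Theorem \ref{mainCB} to force $Z$ onto a curve $C$ of degree at most $e+1$, then observe that an irreducible degree-$r$ curve spans a linear subspace of dimension at most $r$, with equality only for rational normal curves. Where you diverge is in the endgame. You propose to induct on $e$, peeling off one component $C_i$ at a time by choosing a hyperplane $H \supseteq C_i$, invoking the residual Cayley--Bacharach lemma to make $Z \setminus H$ Cayley--Bacharach for $\OO(d-1)$, and covering $Z \cap H$ by the span of $C_i$.

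This inductive scheme has a genuine gap. First, the peeling step presumes a hyperplane containing $C_i$, which fails precisely for nondegenerate components (e.g.\ a rational normal curve of degree $n$ in $\PP^n$), and these are exactly the extremal components you must handle. Second, even when such a hyperplane exists, the induction hypothesis requires $|Z \setminus H| \le (e - \dim\operatorname{span}C_i + 1)(d-1) + 1$, which in turn requires a \emph{lower} bound on $|Z \cap C_i|$ of order roughly $(\dim\operatorname{span}C_i)\cdot d$. You have no a priori such bound: a component could in principle carry very few points of $Z$, and then the residual set is too large for the induction to apply. Your final paragraph acknowledges that the bookkeeping is the crux but does not supply the missing inequality.

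The paper avoids both issues by abandoning induction entirely once it has reduced to a union of rational normal curves $C_1,\dots,C_k$ with $\sum r_i = e+1$. It first argues that pairwise intersections are single points and the intersection graph is a forest (otherwise spans merge and the dimension sum drops to $\le e$). Then it proves directly that $|Z \cap C_i| + \deg_{\text{graph}}(C_i) \ge r_i d + 2$ by constructing, on each $C_i \cong \PP^1$, a section of $\OO_{\PP^1}(r_i d)$ vanishing at the neighbouring intersection points and at $Z \cap C_i^\circ \setminus \{p\}$ but not at $p$, and extending by zero to the other components; this would violate Cayley--Bacharach. Summing over $i$ and using that a forest has Euler characteristic at least $1$ yields $|Z| \ge (e+1)d + 2$, contradicting $|Z| \le (e+1)d + 1$. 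The lower bound on $|Z \cap C_i|$ that your induction needs is thus exactly what the paper's direct argument supplies, but via a global count rather than a peeling recursion.
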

One can replace $d \gg e$ in \ref{conjLU} with $d$ greater than a certain quartic polynomial in $e$, by going through the proof. 
The proof of Theorem \ref{conjLU} (assuming Theorem \ref{mainCB} ) is fairly straightforward and is essentially a consequence of the fact that  an integral curve of degree $e$  lies in an $e$ dimensional linear subspace of projective space.

\subsection{Acknowledgements}
I would like to thank Eduard Looijenga and Madhav Nori for helpful and clarifying discussions. I'd like to thank Jake Levinson, Brooke Ullery and David Stapleton for discussing some of their ideas with me. I'd like to thank Ronno Das, and Aaron Landesmann for suggestion on how to improve the paper. I'd like to thank Jordan Ellenberg for a question that led to Theorem \ref{maindim}. Finally, I would like to thank Joshua Mundinger for extensive comments on several drafts of this paper.
\section{Cayley-Bacharach subsets lie on curves}
In this section we will prove Theorems \ref{mainCB} and \ref{conjLU} . Our strategy to prove Theorem \ref{mainCB} is to induct on the dimension of the ambient space. In the case of $\PP^2$ (the base case of our induction), we have the following theorem of Lopez and Pirola (this is a subcase of Lemma 2.5 of \cite{LP}).
\begin{prop} \label{proplp}
    Let $Z \subseteq \PP^2$ be a Cayley Bacharach set for $\OO(d)$. Let $e \ge 1$. If $|Z| \le e(d- e + 3) - 1$, then $Z$ lies on a curve of degree $\le e-1$.
\end{prop}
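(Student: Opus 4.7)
I would prove the contrapositive by induction on $e$: if $Z$ is Cayley-Bacharach for $\OO(d)$ and does not lie on any curve of degree $\le e - 1$, then $|Z| \ge e(d - e + 3)$.

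For the base case $e = 1$, I need to show that a nonempty Cayley-Bacharach set for $\OO(d)$ satisfies $|Z| \ge d + 2$. Suppose for contradiction that $|Z| \le d + 1$ and fix $p \in Z$. The subspace of $H^0(\PP^2, \OO(d))$ vanishing on $Z \setminus \{p\}$ has codimension at most $|Z| - 1 \le d$, hence dimension at least $\binom{d+2}{2} - d \ge 2$; imposing vanishing at $p$ drops this dimension by at most one, so some section vanishes on $Z \setminus \{p\}$ but not at $p$, contradicting the Cayley-Bacharach property.

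The engine of the inductive step is a residuation lemma, which I would prove first: for any curve $C \subseteq \PP^2$ of degree $k$, setting $W := Z \cap C$ and $Z' := Z \setminus W$, the set $Z'$ is Cayley-Bacharach for $\OO(d - k)$. Indeed, if $D$ is a curve of degree $d - k$ vanishing on $Z' \setminus \{q\}$ for some $q \in Z'$, then $C \cup D$ is a degree $d$ curve vanishing on $Z \setminus \{q\}$; the Cayley-Bacharach property of $Z$ forces $q \in C \cup D$, and since $q \notin C$ we conclude $q \in D$. Applied with $C = L$ a line, this produces a Cayley-Bacharach set $Z'$ for $\OO(d - 1)$ which cannot lie on any curve of degree $\le e - 2$ (otherwise $L$ together with such a curve would contain $Z$ in degree $\le e - 1$). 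The inductive hypothesis applied with parameters $(d - 1, e - 1)$ then yields $|Z'| \ge (e - 1)(d - e + 3)$, and hence $|Z \cap L| \le |Z| - (e - 1)(d - e + 3)$ for every line $L$.

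To conclude $|Z| \ge e(d - e + 3)$ one wants a line $L$ with $|Z \cap L| \ge d - e + 3$, and producing such a line is the main obstacle: a priori $Z$ could be in general position with respect to lines. My plan to overcome this is to iterate the residuation lemma with curves of higher degree $k \le e - 1$; applying the same argument with parameters $(d - k, e - k)$ yields $|Z \setminus (Z \cap C)| \ge (e - k)(d - e + 3)$ for every curve $C$ of degree $k$ not containing $Z$. Taking $C$ of degree $e - 1$ passing through a maximal subset of $Z$, which exists because the Hilbert function of $Z$ in degrees $\le e - 1$ is maximal (as $Z$ does not lie on any such curve), and combining with the base case applied to the nonempty residual $Z \setminus (Z \cap C)$, which is Cayley-Bacharach for $\OO(d - e + 1)$ and hence has at least $d - e + 3$ points, should deliver the required lower bound. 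Making this numerical bookkeeping tight without a one-unit slack is the delicate final step, and I expect it to require a careful choice of $C$ exploiting the extra incidences forced by the pencil of degree-$(e-1)$ curves through any $\binom{e+1}{2} - 1$ points of $Z$.
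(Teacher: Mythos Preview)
The paper does not prove this proposition; it is quoted as a special case of Lemma~2.5 of Lopez--Pirola \cite{LP}, so there is no argument in the paper to compare yours against. The question is whether your sketch stands on its own.

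There is first a minor slip in the base case: from ``the space of forms vanishing on $Z\setminus\{p\}$ has dimension $\ge 2$'' together with ``vanishing at $p$ drops the dimension by at most one'' you cannot conclude that some form vanishes on $Z\setminus\{p\}$ but not at $p$; the Cayley--Bacharach hypothesis says precisely that the drop is zero. The fix is immediate (take a product of $\le d$ lines, one through each point of $Z\setminus\{p\}$ and avoiding $p$), so this is not the real issue.

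The substantive gap is the one you yourself flag, and it is genuine. Your residuation lemma is correct and yields, for every curve $C$ of degree $k\le e-1$, the bound $|Z\setminus C|\ge (e-k)(d-e+3)$. But to conclude $|Z|\ge e(d-e+3)$ you need a matching lower bound $|Z\cap C|\ge k(d-e+3)$ for \emph{some} such $C$, and nothing in your argument produces one. Choosing $C$ of degree $e-1$ through $\binom{e+1}{2}-1$ points of $Z$ guarantees only $|Z\cap C|\ge\binom{e+1}{2}-1$, which falls short of the required $(e-1)(d-e+3)$ by an amount linear in $d$; no ``careful choice of $C$'' or pencil argument in bounded degree will close a gap that grows with $d$. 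The underlying problem is that your scheme uses the Cayley--Bacharach property of $Z$ only through residuation, which transfers it to the complement $Z\setminus C$ but says nothing about how many points of $Z$ must lie on $C$. A complete proof needs further input on the Hilbert function of $Z$ (this is what Lopez--Pirola supply), and that input is not derivable from the residuation lemma alone.
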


Our induction strategy will involve linearly projecting onto smaller dimensional projective spaces. We will need the following propositions in our proof, which deal with projections and Cayley Bacharach sets.

\begin{prop}\label{projCB}
Let $n \ge 2$. Let $Z \subseteq \PP^n$ be a Cayley-Bacharach set for $\OO(d)$. Let $P \in \PP^n \setminus Z$. Let $H \cong \PP^{n-1}$ be a hyperplane not containing $P$. Let $\pi: \PP^n \setminus \{ P\} \to H$ be the projection map. Suppose that the scheme theoretic image of $Z$, $\pi(Z)$  is reduced. Then $\pi(Z)$ is a Cayley-Bacharach set for $\OO(d)$.
\end{prop}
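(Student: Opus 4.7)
The plan is direct: pull back a section from $H$ to $\PP^n$ and apply the Cayley-Bacharach property of $Z$. To set this up, choose homogeneous coordinates on $\PP^n$ so that $P = [0:\cdots:0:1]$ and $H = \{x_n = 0\}$. Then $\pi$ is the map $[x_0:\cdots:x_n] \mapsto [x_0:\cdots:x_{n-1}]$. A section $g \in H^0(H, \OO_H(d))$ is a homogeneous polynomial of degree $d$ in $x_0,\dots,x_{n-1}$; viewing it as a polynomial on $\PP^n$ independent of $x_n$ produces a global section $\pi^{*}g \in H^0(\PP^n, \OO(d))$ satisfying $(\pi^{*}g)(z) = g(\pi(z))$ for every $z \in \PP^n \setminus \{P\}$. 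Since $P \notin Z$, this identity is valid at every point of $Z$.

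Next I would interpret the reducedness hypothesis. The scheme-theoretic image $\pi(Z)$ being reduced is precisely the statement that $\pi|_Z$ is injective on underlying points, i.e. that no two points of $Z$ are collinear with $P$, so that $\pi$ restricts to a bijection $Z \to \pi(Z)$.

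The main argument is then short. Suppose $g \in H^0(H, \OO_H(d))$ vanishes on all but one point $Q$ of $\pi(Z)$, and let $R \in Z$ be the unique preimage of $Q$ under $\pi|_Z$. For every $z \in Z \setminus \{R\}$ we have $\pi(z) \ne Q$, hence $(\pi^{*}g)(z) = g(\pi(z)) = 0$. Therefore $\pi^{*}g \in H^0(\PP^n, \OO(d))$ vanishes on $Z \setminus \{R\}$; by the Cayley-Bacharach property of $Z$ it must vanish on all of $Z$, and in particular $g(Q) = (\pi^{*}g)(R) = 0$. Thus $g$ vanishes on all of $\pi(Z)$, which is the Cayley-Bacharach condition for $\pi(Z)$.

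The only mildly subtle point is translating the scheme-theoretic image hypothesis into the bijectivity of $\pi|_Z$; once this is in hand, the rest of the argument is formal, relying on nothing more than the fact that linear projection pulls degree-$d$ sections back to degree-$d$ sections and respects evaluation at points away from $P$. I do not foresee a serious obstacle.
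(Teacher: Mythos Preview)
Your proof is correct and follows essentially the same argument as the paper: choose coordinates placing $P$ at $[0:\cdots:0:1]$ and $H$ at $\{x_n=0\}$, observe that reducedness of $\pi(Z)$ amounts to $\pi|_Z$ being a bijection, pull back a degree-$d$ section from $H$ to $\PP^n$ as a polynomial independent of $x_n$, and apply the Cayley--Bacharach property of $Z$. There is nothing to add.
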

\begin{proof}
 We note that given a linear projection $\pi$, defined by projecting from a point $P$ and a zero dimensional reduced subscheme $Z$, $\pi(Z)$; the conditions that $P$ does not lie on a line joining any two of the points in $Z$ and that $\pi(Z)$ is reduced are equivalent.
 
Without loss of generality, we may assume $P =[0:0: \dots :0: 1]$ and $H$ is the hyperplane defined by $ X_{n+1} =0.$ Then the projection map $\pi$ is defined by $[x_1: x_2 : \dots: x_n: x_{n+1}] \mapsto [x_1: x_2 : \dots: x_n].$ Let $Z = \{P_1, P_2,\dots, P_{m}\}.$
Let $\pi(Z) = \{\pi(P_1),\pi(P_2), \dots, \pi(P_m)\}$. Let $f \in H^0(\PP^{n-1}, \OO(d))$ be a section vanishing at $\{\pi(P_1), \dots, \pi(P_{m-1})\}. $ Such an $f$ is given by a degree $d$ homogenous polynomial $p$ in the variables  $X_1 \dots, X_n$. However the $p$ can be thought of as a polynomial in the variables $X_1 \dots, X_n, X_{n+1}$ as well, with no dependence on $X_{n+1}$. We observe that for any $Q \in \PP^{n} - \{[0: \dots: 0:1]\}$, $p(Q ) =0$ iff $p(\pi(Q))=0$. Hence $p(P_i) =0$ for $i \in \{1, \dots m-1\}.$ But since $Z$ is Cayley-Bacharach for $\OO(d)$ this implies $p$ vanishes on $P_m$ as well. But this implies $p$ vanishes on $\pi(P_m)$. Hence $\pi(Z)$ is Cayley-Bacharach for $\OO(d)$.
\end{proof}
\begin{prop}\label{intersectioncone}
Let $n \ge 3$. Let $P_1, P_2$ be two distinct points in $  \PP^n$. Let $H$ be a hyperplane in $\PP^n$ such that $P_1, P_2 \not \in H$. Let $C_1, C_2$ be two curves in $H$ such that $\textrm{deg}(C_i) \le e_i$. Let $S_i$ be the cone over $C_i$ with apex $P_i$. Then $S_1 \cap S_2 =  C \cup \Gamma$, where $C$ is a curve and $\Gamma$ is zero dimensional. Furthermore $\deg(C) + |\Gamma| \le e_1e_2$.
\end{prop}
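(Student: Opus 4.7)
The plan is to analyze $S_1 \cap S_2$ via projection from the line $L = \overline{P_1P_2}$. Setting $Q := L \cap H$ (a single point, since $P_1, P_2 \notin H$), let $\tau \colon H \dashrightarrow \PP^{n-2}$ denote projection from $Q$, let $\rho \colon \PP^n \dashrightarrow \PP^{n-2}$ denote projection from $L$, and let $\pi_i \colon \PP^n \setminus \{P_i\} \to H$ denote projection from the apex $P_i$. A direct verification gives $\rho = \tau \circ \pi_i$ away from the relevant indeterminacy loci. Set $D_i := \tau(C_i) \subseteq \PP^{n-2}$; this is a curve (or point) with $\deg D_i \cdot \deg(\tau|_{C_i}) = \deg C_i \le e_i$.

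First I would establish that $\dim(S_1 \cap S_2) \le 1$. Any two-dimensional irreducible component of $S_1 \cap S_2$ would be a common component of the cones $S_1, S_2$; but a two-dimensional cone determines its apex uniquely, so cones with distinct apices $P_1 \ne P_2$ cannot share a two-dimensional component. This yields the decomposition $S_1 \cap S_2 = C \cup \Gamma$. Next I would set up the key correspondence: for $x \in \PP^n \setminus L$, the points $\pi_1(x), \pi_2(x), Q$ all lie on the line $H \cap \langle L, x\rangle$, hence are collinear in $H$. Consequently $x \in S_1 \cap S_2$ is equivalent to $(\pi_1(x), \pi_2(x)) \in W := C_1 \times_{\PP^{n-2}} C_2$, and conversely each pair $(q_1, q_2) \in W$ other than $(Q, Q)$ determines a unique $x$, as the intersection $\overline{P_1 q_1} \cap \overline{P_2 q_2}$ (or as $x = q$ if $q_1 = q_2 = q \ne Q$). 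The pair $(Q, Q)$ itself, if it lies in $W$, corresponds to the entire line $L \subseteq S_1 \cap S_2$. This gives a bijection between $(S_1 \cap S_2) \setminus L$ and $W \setminus \{(Q, Q)\}$.

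Finally I would bound the size of $W$ by the projection formula. When $D_1, D_2$ share no common component, Bezout in $\PP^{n-2}$ gives $|D_1 \cap D_2| \le \deg D_1 \cdot \deg D_2$, each point of $D_1 \cap D_2$ contributes at most $\deg(\tau|_{C_1}) \cdot \deg(\tau|_{C_2})$ elements to $W$, and hence $|W| \le \deg C_1 \cdot \deg C_2 \le e_1 e_2$. When $D_1, D_2$ share a curve component, $W$ becomes one-dimensional and maps onto the corresponding component of $C$ lying off $H$; an analogous degree computation bounds that component's degree. Summing contributions, $\deg(C) + |\Gamma| \le e_1 e_2$, with the line $L$ (in the case $Q \in C_1 \cap C_2$) absorbed by the $(Q, Q)$ term. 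The main obstacle will be the degenerate case where $Q \in C_i$ or where a component of $C_i$ is a line through $Q$, so that $\tau|_{C_i}$ fails to be a morphism; I would resolve this by blowing up $Q$ in $H$ to obtain the $\PP^1$-bundle $\tilde H \to \PP^{n-2}$, replacing each $C_i$ by its strict transform $\tilde C_i$, and running the projection-formula argument there instead.
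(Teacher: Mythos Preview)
Your approach is far more elaborate than necessary. The paper dispatches this in two lines: once one knows $S_1$ and $S_2$ share no two-dimensional component (so $\dim(S_1\cap S_2)\le 1$), the bound $\deg C + |\Gamma| \le \deg S_1 \cdot \deg S_2 = \deg C_1 \cdot \deg C_2 \le e_1 e_2$ is an immediate application of Fulton's generalised B\'ezout inequality (\cite{F}, p.~223). No projections, no fiber products, no case analysis.

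Your projection construction does set up a correct set-theoretic correspondence between $(S_1\cap S_2)\setminus L$ and $W=C_1\times_{\PP^{n-2}}C_2$, but the gap is in converting this into a degree bound. When $W$ is one-dimensional---which you treat as a side case but is in fact the \emph{only} case for $n=3$, since then $\PP^{n-2}=\PP^1$ and $D_1=D_2=\PP^1$ automatically---you assert that ``an analogous degree computation bounds that component's degree,'' yet this is precisely the substantive step. A curve component $C'\subseteq S_1\cap S_2$ corresponds bijectively to a component of $W$, but the degree of $C'$ \emph{as a subvariety of $\PP^n$} is not read off from $W$ without further work; you would need to relate $\OO_{\PP^n}(1)|_{C'}$ to the projection data, and carrying that out amounts to reproving B\'ezout for the surfaces $S_1,S_2$. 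So the argument as written either has a genuine gap or is circular, and invoking Fulton directly is both shorter and cleaner.

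A minor issue (shared with the paper's own proof): the claim that a two-dimensional irreducible cone determines its apex is false for planes. If $C_1$ and $C_2$ share a line component passing through $Q=L\cap H$, then $S_1$ and $S_2$ share that plane and the conclusion fails outright. In the paper's application the $P_i$ are chosen generically so this does not occur, but strictly speaking the proposition needs a mild nondegeneracy hypothesis.
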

\begin{proof}
We begin by noting that a cone over an irreducible curve is irreducible and that cones with  different apexes are distinct subvarieties.  This immediately implies that $S_1$ and $S_2$ have no irreducible components in common and $S_1 \cap S_2$ is a lower dimensional subvariety of $\PP^n$ and hence is of the form $C \cup \Gamma$, where $C$ is a curve and $\Gamma$ is zero dimensional.

We note that $\deg(S_i) = \deg(C_i)$. We now apply the generalised Bezout's theorem of \cite{F} (bottom of page 223), which states that the total degree of $S_1 \cap S_2$ (i.e. $\deg C + |\Gamma|$) is less than or equal to $\deg(S_1) \deg(S_2) \le e_1 e_2$.
\end{proof}

\begin{prop} \label{curveonly}
Let $d \gg e >0$. Let $Z \subseteq \PP^n$ be a Cayley-Bacharach set for $\OO(d)$. Let $C \subseteq \PP^n $ be a reduced curve. Let $\Gamma \subseteq \PP^n$ be a finite set of points disjoint from $C$. Suppose $\deg(C) + |\Gamma| \le e^2$. If $Z \subseteq C \cup \Gamma$,  then $Z \subseteq C$.
\end{prop}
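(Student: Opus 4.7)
The plan is to argue by contradiction: suppose there exists a point $p \in Z \cap \Gamma$. Since $Z \setminus \{p\} \subseteq C \cup (\Gamma \setminus \{p\})$, to contradict the Cayley-Bacharach property of $Z$ it suffices to exhibit a section $f \in H^0(\PP^n, \OO(d))$ that vanishes on $C \cup (\Gamma \setminus \{p\})$ but not at $p$: such an $f$ would vanish on $Z \setminus \{p\}$ yet be nonzero at $p$, forcing a contradiction with the Cayley-Bacharach condition.

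To construct such an $f$, I would build it as a product. For each $q \in \Gamma \setminus \{p\}$, pick a linear form $L_q$ with $L_q(q) = 0$ and $L_q(p) \ne 0$, which exists because $q \ne p$; the product $\prod_q L_q$ then has degree $|\Gamma| - 1 \le e^2 - 1$, vanishes on $\Gamma \setminus \{p\}$, and is nonzero at $p$. Next, I need a polynomial $F$ of some degree $D$ depending only on $e$ and $n$ that vanishes on $C$ but not at $p$. Given such an $F$, I would set $f = F \cdot \prod_{q \in \Gamma \setminus \{p\}} L_q \cdot \ell^{\,d - D - |\Gamma| + 1}$, where $\ell$ is any linear form with $\ell(p) \ne 0$. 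Then $\deg f = d$, the vanishing on $C \cup (\Gamma \setminus \{p\})$ is immediate from the factors $F$ and $L_q$, and $f(p) \ne 0$ since every factor is nonzero at $p$. The exponent $d - D - |\Gamma| + 1$ is nonnegative once $d \gg e$, using $|\Gamma| \le e^2$ and that $D$ depends only on $e$ and $n$.

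The main obstacle is producing the polynomial $F$: I need the linear system of degree-$D$ hypersurfaces containing $C$ to have a member not passing through $p$, for some $D$ bounded in terms of $e$ and $n$; equivalently, $I_C(D)$ should be globally generated at $p$. This is a statement about the Castelnuovo-Mumford regularity of reduced curves of bounded degree in projective space, and it follows from the Gruson-Lazarsfeld-Peskine regularity bound, which supplies a $D = D(e, n)$ for which $I_C$ is $D$-regular; hence $I_C(D)$ is globally generated, and since $p \notin C$, some section of $I_C(D)$ is nonzero at $p$. Alternatively, one may argue more elementarily by projecting $C$ from a general linear center to $\PP^2$, observing that the image is a plane curve of degree $\le e$ cut out by a single polynomial of degree $\le e$, and pulling this polynomial back to $\PP^n$; a sufficiently general choice of projection center ensures that the pullback does not vanish at the fixed point $p \notin C$, which yields the required $F$ with $D \le e$.
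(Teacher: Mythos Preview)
Your argument is correct and reaches the same contradiction as the paper, but by a different mechanism. The paper's proof is a one-line appeal to Serre vanishing: for $d$ large relative to $\deg(C)+|\Gamma|$, the restriction map $H^0(\PP^n,\OO(d))\to H^0(C\cup\Gamma,\OO(d))$ is surjective, so the ``indicator'' section on $C\cup\Gamma$ that is zero on $C\cup(\Gamma\setminus\{p\})$ and nonzero at $p$ lifts to a global degree-$d$ polynomial, immediately violating Cayley--Bacharach. Your route is instead explicitly constructive: you build the witness as a product $F\cdot\prod_q L_q\cdot \ell^{\,d-D-|\Gamma|+1}$, reducing the question to finding a single hypersurface $F$ of bounded degree through $C$ missing $p$. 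The paper's approach is shorter and hides the dependence of $d_0$ on $e$ inside an unspecified Serre-vanishing threshold; yours is more hands-on and, via the projection argument, makes the bound on $D$ effective without invoking any cohomological machinery.

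One small slip: in your projection argument you write that the image plane curve has degree $\le e$, but the hypothesis only gives $\deg(C)\le e^2$, so the bound should read $\le e^2$. This does not affect the logic, since you only need $D$ bounded in terms of $e$ and $n$, and $D\le e^2$ suffices. Also note that the Gruson--Lazarsfeld--Peskine bound is stated for integral curves; for a reducible reduced curve you would either cite an extension or, more simply, take $F$ to be the product of one such hypersurface for each irreducible component (the number of components is at most $\deg(C)\le e^2$, keeping $D$ bounded in terms of $e$). Your projection alternative avoids this issue entirely.
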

\begin{proof} 
For a fixed value of $\deg(C) + |\Gamma|$, there exists $d_0$ such that for $d >d_0$, the map $H^0(\PP^n, \OO(d)) \to H^0(C \cup \Gamma, \OO(d)) $ is surjective.  Suppose $P \in Z \cap \Gamma$, and that $P \not \in C$. Then there exists a section $f \in H^0(C \cup \Gamma, \OO(d))$ that is nonzero at $P$ and vanishes on $C \cup \Gamma \setminus \{P\}$. Since the map $H^0(\PP^n, \OO(d)) \to H^0(C \cup \Gamma, \OO(d)) $ is surjective, we can find a polynomial  $p \in H^0(\PP^n, \OO(d))$ that restricts to $f$. Such a $p$ vanishes on $Z \setminus\{P\}$ and doesn't vanish on $P$. This contradicts the Cayley-Bacharach property of $Z$. Hence no such $p$ exists, i.e. $Z$ is contained in $C$.
\end{proof}

\begin{prop}\label{curveCB}
Let $d \gg e >0$. Let $C \subseteq \PP^n$ be a reduced curve of degree $e$. Let $Z \subseteq C$ be a Cayley-Bacharach set for $\OO(d)$. We assume that $Z$ is not contained in any curve of degree $< e$. Then there is a positive real valued increasing function $f$ not depending on $Z$ or $d$ such that $|Z| \ge ed - f(e)$.
\end{prop}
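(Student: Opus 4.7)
The strategy is to push the Cayley-Bacharach condition down from $\PP^n$ to $C$ (and then to its irreducible components), and then to apply Serre duality on a reduced curve to obtain the desired degree inequality.

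First I would show that for $d$ larger than some uniform Castelnuovo-Mumford regularity bound for reduced curves of degree $e$ in $\PP^n$ (such a bound exists since the Hilbert scheme of degree-$e$ reduced curves in $\PP^n$ is bounded), the restriction $H^0(\PP^n,\OO(d)) \to H^0(C, \OO_C(d))$ is surjective. Lifting sections then transfers the ambient Cayley-Bacharach condition into the statement that $Z$ is Cayley-Bacharach on $C$ for $\OO_C(d)$. If $C = C_1 \cup \cdots \cup C_k$ is the decomposition into irreducible components, the hypothesis that $Z$ lies on no curve of degree $< e$ forces every $C_i$ to contain a point of $Z$ (else deleting $C_i$ yields a smaller curve containing $Z$), and likewise forces $Z_i := Z \cap C_i$ not to lie on any curve of degree $< e_i := \deg C_i$. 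A Mayer-Vietoris argument, again leveraging the uniform regularity bound to solve the section-extension problem over the intersection points $C_i \cap (\cup_{j \neq i} C_j)$ (whose cardinality is bounded by Bezout), shows that $Z_i$ is Cayley-Bacharach on $C_i$ for $\OO_{C_i}(d)$.

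On each irreducible component $C_i$, pick $P \in Z_i$ and tensor the exact sequence $0 \to I_{Z_i} \to I_{Z_i \setminus \{P\}} \to \OO_P \to 0$ with the line bundle $\OO_{C_i}(d)$. The long cohomology sequence, combined with the Cayley-Bacharach equality $h^0(\OO_{C_i}(d) \otimes I_{Z_i}) = h^0(\OO_{C_i}(d) \otimes I_{Z_i \setminus \{P\}})$, forces $h^1(C_i, \OO_{C_i}(d) \otimes I_{Z_i}) \geq 1$. Since a reduced curve is Cohen-Macaulay, Serre duality with the dualizing sheaf $\omega_{C_i}$ produces a nonzero $\OO_{C_i}$-linear map $I_{Z_i} \to \omega_{C_i} \otimes \OO_{C_i}(-d)$. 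As $C_i$ is integral, both sides are rank-$1$ torsion-free sheaves on an integral curve, so the map is automatically injective; comparing Euler-characteristic degrees $\deg I_{Z_i} = -|Z_i|$ and $\deg(\omega_{C_i} \otimes \OO_{C_i}(-d)) = 2 p_a(C_i) - 2 - e_i d$ yields $|Z_i| \geq e_i d - 2 p_a(C_i) + 2$. Summing over components and invoking the classical bound $\sum_i p_a(C_i) \leq \sum_i \binom{e_i-1}{2} \leq \binom{e-1}{2}$ for reduced curves of degree $e$ in projective space, we conclude $|Z| \geq ed - f(e)$ with $f(e) := 2\binom{e-1}{2}$.

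The main obstacle is the descent from Cayley-Bacharach on $C$ to Cayley-Bacharach on each irreducible component: the content is that for $d \gg e$, any section of $\OO_{C_i}(d)$ vanishing on $Z_i \setminus \{P\}$ extends to a section on all of $C$ vanishing on $Z \setminus \{P\}$, which requires a careful Mayer-Vietoris / extension argument using the uniform Castelnuovo-Mumford regularity bound on reduced degree-$e$ curves in $\PP^n$. Without this reduction, the map from Serre duality can be zero on some component (as is easy to see for a disjoint union of lines), ruining the degree comparison. Once one is on an integral curve the Serre-duality computation is a straightforward application of standard techniques, so the reducibility bookkeeping is where the real work lies.
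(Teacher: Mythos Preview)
Your overall strategy is correct and close in spirit to the paper's: both proofs reduce to a Riemann--Roch/Serre-duality inequality on each irreducible component. The paper passes to the normalization $\tilde C_i$ and applies ordinary Riemann--Roch there, whereas you stay on the possibly singular integral curve $C_i$ and use Serre duality with the dualizing sheaf. Your degree comparison $|Z_i| \ge e_i d - 2p_a(C_i) + 2$ via an injection $I_{Z_i} \hookrightarrow \omega_{C_i}(-d)$ is clean and perfectly valid once you know $h^1(C_i, I_{Z_i}(d)) > 0$.

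The genuine gap is the claim that $Z_i = Z \cap C_i$ is Cayley--Bacharach on $C_i$ for $\OO_{C_i}(d)$. This is false in general, and the Mayer--Vietoris/regularity argument you sketch cannot repair it. The extension step requires producing a section on $C' = \bigcup_{j\neq i} C_j$ that vanishes on the \emph{large} set $(Z \cap C') \setminus \{P\}$ while matching $f$ on the intersection locus $S = C_i \cap C'$; uniform regularity of $C'$ gives you nothing here because the vanishing constraint already has size comparable to $h^0(C',\OO(d))$. Concretely, take $C = L_1 \cup L_2$ two lines in $\PP^2$ and let $Z$ be the complete intersection of $L_1 \cup L_2$ with a general curve of degree $d+1$; then $Z$ is Cayley--Bacharach for $\OO(d)$, but $Z_1 = Z \cap L_1$ consists of $d+1$ general points on $L_1 \cong \PP^1$, which is \emph{not} Cayley--Bacharach for $\OO_{\PP^1}(d)$.

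The fix is exactly what the paper does (on the normalization, but it works equally well in your setting): enlarge $Z_i$ to $Z_i \cup S_i$ where $S_i = C_i \cap \bigl(\bigcup_{j\neq i} C_j\bigr)$. For any $P \in Z_i \setminus S_i$ (such a point exists by your minimality argument, since $|S_i|$ is bounded by B\'ezout while $|Z_i|$ must be large), a section of $\OO_{C_i}(d)$ vanishing on $(Z_i \cup S_i)\setminus\{P\}$ extends \emph{by zero} to all of $C$, and the ambient Cayley--Bacharach property forces it to vanish at $P$. This yields $h^1(C_i, I_{Z_i \cup S_i}(d)) \ge 1$, and your Serre-duality computation then gives $|Z_i| \ge e_i d - 2p_a(C_i) + 2 - |S_i|$, which is still $e_i d - O_e(1)$. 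With this correction your proof goes through.
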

\begin{proof}
    The proof of this proposition is essentially Riemann-Roch for curves. However, we first need to do some work to convert the given curve $C$ into a smooth curve to which we can apply the Riemann Roch theorem.

    We first note that a Cayley-Bacharach set must contain at least $d$ points (see for instance \cite{LU}). Furthermore there is a function $\phi_1$, such that any curve $C_0$ of degree $e$ satisfies  $ |\Sing(C_0)|\le \phi_1(e)$ . Hence for $d \gg e$, $Z$ is not contained in $\textrm{Sing}(C)$. Let $C_1, \dots, C_k$ be the irreducible components of $C$. We note that $k \le e$.

    We can similarly conclude that for any irreducible component $C_i$ of $C$, the intersection $Z \cap (C_i \setminus \textrm{Sing}(C)) $ is nonempty. If this were not the case, $Z$ would be a Cayley-Bacharach set contained in the union of a curve $C' = \cup_{j \neq i} C_j$ and a finite set, $\Sing C_i$. But for $d \gg e$,we may apply the argument of Proposition \ref{curveonly} to conclude  $Z \subseteq C'$ which contradicts the fact that $Z$ is not contained in a curve of degree $<e$.

    Let $\pi:\tilde C \to C$ be a resolution of singularities. Let $\tilde Z = \pi^{-1}(Z) \cup \pi^{-1}(\textrm{Sing} C)$.  Note that there is some function $\phi_2$ such that  $|\tilde Z| - |Z| < \phi_2(e)$. Let $P \in Z \setminus \textrm{Sing}(C)$. Let $\tilde P = \pi^{-1}(P)$. Suppose there exists $ \tilde f \in H^0(\tilde C , \OO(d))$ vanishing at $\tilde Z - \tilde P$, such that $ \tilde f(\tilde P) \neq 0$. Then $\tilde f$ is the pullback of a section $f \in H^0(C, \OO(d))$ vanishing at $Z \setminus\{P\}$ and not vanishing at $P$. For $d\gg e$, $H^0(\PP^n, \OO(d))$ surjects onto $H^0(C, \OO(d))$ and hence $f$ is the restriction of some element of $H^0(\PP^n, \OO(d))$. This contradicts the Cayley-Bacharach property of $Z$. Hence no  $f$ and consequently no such  $\tilde f$ can exist. We may now  apply the Riemann Roch theorem to the curve $\tilde C$. 

     Let $\tilde C_1, \dots, \tilde C_k$ be the corresponding components of $\tilde C$.

    Let $ \tilde P \in \pi^{-1}( (Z \cap C_i) \setminus \Sing(C))$. Let $\pi(\tilde P) = P$. We note that any $\tilde f \in H^0(\tilde C_i, \OO(d))$ that vanishes on $\pi^{-1}(\Sing(C)) \cup  \pi^{-1}(Z) \setminus \{P\}$ is automatically the pullback of a section  $f \in H^0(C, \OO(d))$ vanishing at $Z \setminus\{P\}$ and hence $\tilde f$ is forced to vanish at $P$. Let $Y_i = \pi^{-1}(\Sing(C)) \cup  \pi^{-1}(Z) $. We have established that the restriction map  $H^0(\tilde C_i, \OO(d)) \to H^0(Y_i, \OO(d)) $ is not surjective. Let $|Y_i|$ denote the line bundle on $\tilde C_i$ associated with the divisor $Y_i$. Thus,   $$H^1(\tilde C_i, \OO(d) \otimes |Y_i|^{\vee}) \neq 0.$$ By Riemann-Roch this implies that \begin{equation}\label{ineq1}
        d \deg C_i - |\pi^{-1}(Z \cap C_i)| - |\pi^{-1}(\Sing(C) \cap C_i)| \le \textrm{genus(C)}.
    \end{equation}

      If we sum (\ref{ineq1}) over all $i$ we obtain $ d \deg(C) -  \sum_i(|\pi^{-1}(Z \cap C_i)| - |\pi^{-1}(\Sing(C) \cap C_i)|)\le \sum_i \textrm{genus}(C_i) .$ Hence  
      \begin{equation}\label{ineq2}
          \sum_i(|\pi^{-1}(Z \cap C_i)| \ge d \deg(C) - \sum_i \textrm{genus}(C_i)  - \sum |\pi^{-1}(\Sing(C) \cap C_i)|).
      \end{equation}

  We  note that $|\pi^{-1}(Z \cap C_i)| - |Z \cap C_i| \le \phi_2(e)$, with $\phi_2$ as defined above. 
      Hence, $|Z \cap C_i| \ge d(\deg C_i) - \phi_2(e) - \phi_1(e)$.
      Furthermore $\sum_i |Z \cap C_i| -|Z| \le k \Sing(C) \le e \phi_1(e).$

   Combining this with \ref{ineq2} we have
   $$|Z| \ge d \deg(C) - \sum_i(\textrm{genus}(C_i)) -  \sum_i |\pi^{-1}(\Sing(C) \cap C_i)|) - e \phi_2(e)$$ $$ \ge  d \deg(C) - \sum_i(\textrm{genus}(C_i)) - 2 e \phi_2(e) .$$

   Now we note that the genus of each individual $C_i$ is bounded in terms of $e$ and $n$ by the Castelnuovo bound. Let us call this bound $\phi_3(e)$. This gives us 

   $$|Z| \ge d\deg(C) - e(\phi_3(e)+ 2 \phi_2(e)).$$ We let $f(e)= \max_{j \le e}(1, j(\phi_3(j)+ 2 \phi_2(j)) )$. This establishes the result.
\end{proof}

We now prove Theorem \ref{mainCB}.

\begin{proof}[Proof of Theorem \ref{mainCB}]
    We will prove the theorem by induction on $n$. For $n =2$, the Theorem follows from Proposition \ref{proplp}.  Let us assume the Theorem for Cayley-Bacharach sets in  $\PP^{n}$ for some $n\ge 2$. Let $Z \subseteq{\PP^{n+1}}$ be a Cayley Bacharach set for $\OO(d)$, with $|Z| < ed -f(e)$. Let $P_1, P_2$ be two distinct points disjoint from $Z$ such that no line passing through two of the points of $Z$ contains $P_1$ or $P_2$. Let $H$ be a hyperplane in $\PP^{n+1}$ not containing $P_1$ or $P_2$. We project $Z$ linearly from $P_i$  onto $H$ to obtain a  set $Z_i$. The set $Z_i$ is Cayley-Bacharach by Proposition \ref{projCB}. By the induction hypothesis $Z_i$ lies on a curve $C_i$ of degree $e_i \le e$. Therefore $Z$ lies on the intersection of  $S_1$ and $S_2$, where $S_i$ is the cone over $C_i$ with apex $P_i$. By  Proposition \ref{intersectioncone} $S_1 \cap S_2 = C \cup \Gamma$ where $\deg(C) + |\Gamma| \le e_1 e_2$.  By  Proposition \ref{curveonly}, $ Z \subseteq C$.  Let us replace $C$ by the curve of minimal degree containing $Z$.  By Proposition \ref{curveCB}, $|Z| \ge \deg(C) d - f(\deg (C))$.  But our assumption implies that $|Z| \le  e d - f(e)$.
    Since $d \gg e$, this implies $\deg(C) \le e$.
    
\end{proof}

We now move on to the proof of Theorem \ref{conjLU}.

\begin{proof} [Proof of Theorem \ref{conjLU}]
    Let $Z$ be the given Cayley-Bacharach set. By Theorem \ref{mainCB}, for $d \gg e$ and  $(e+1)d +1 \le ed - f(e)$ $ Z$ is contained in a curve $C$ of degree at most $e+1$.  We remind the reader that  an integral curve of degree $e$ is contained in an $e$ dimensional linear subspace and an integral curve of degree $e$ that is not contained in an $e-1$ dimensional subspace is a rational normal curve.
   
    As a result the  only curves of degree $e +1$ that are not contained in a union of linear subspaces whose dimensions sum to  $\le e$ are unions of rational normal curves. 
     Hence for our purposes we may assume $Z$ is a Cayley- Bacharach set for $\OO(d)$ that is contained in a union of $k$ rational normal curves $C_i$ with $\deg(C_i) = r_i$ and $\sum r_i = e+1$, as the result follows in all other cases.

     .
    Suppose two rational normal curves $C_i$ and $C_j$ intersect at $k$ points. Then by the properties of rational normal curves one may conclude that their union $C_i \cup C_j$ is contained in a subspace of dimension $\deg C_i + \deg C_j + 1 -k$. So if $k>1$ for any pair $(i,j)$ we may replace $P_i, P_j$ by the span of $P_i$ and $P_j$ to obtain a new sequence of linear subspaces whose dimensions sum to $\le e$. 

Hence we may assume $|C_i \cap C_j| \le 1$ as the result follows in all other cases.

Furthermore we claim that the intersection graph of the $C_i$ is a forest. This is established as follows:

If $$|C_i \cap C_j| = |C_j\cap C_k| = |C_i \cap C_k| = 1$$, then $\dim \textrm{span} P_i \cup P_j \cup P_k < \dim P_i + \dim P_j + \dim P_k$ unless $|C_i \cap C_j \cap C_k| \neq 0$.
A similar argument implies that there are no  cycles in the intersection graph, and hence it is a forest.

Let $d(C_i)$ denote the degree of the vertex $C_i$ in the intersection graph. We claim that  $|Z \cap C_i| + d(C_i) \ge r_i d +2$. For suppose that $|Z \cap C_i| + d(C_i) < r_i d +2$.  Let $S$ be the set of points in $C_i$ that are also in some $C_j$ for $j \neq i$. Let $C_i^{\circ} = C_i \setminus S$, i.e. the interior of $C_i$ in $C$. Let $p \in Z \cap C_i^{\circ} $. We note that $|Z \cap C_i^{\circ} \setminus{p}| + |S| \le |Z\cap C_i| - 1 + d(C_i) \le r_i d$, by assumption. However, given any set of $ \le r_id$ points not containing $p$ in $C_i$, there is a section of $f \in H^0(C_i, \OO(d))$ vanishing at those points and not at $p$ (Here we use the fact that $C_i$ is isomorphic to 
$\PP^1$). We may extend $f$ from $C_i$ to $C$, by defining it to be zero outside $C_i$. Then $f$ vanishes on $Z \setminus \{p\}$ but not at $p$. This contradicts the Cayley-Bacharach property of $Z$. Hence 
  $|Z \cap C_i^{\circ}| + d(C_i) \ge r_i d +2$. 

  By summing this inequality over $i$, we obtain $$|Z| + \sum_i(d(C_i)) \ge \sum_i |Z \cap C_i^{\circ}| +  \sum_i d(C_i) \ge  \sum_i r_id + 2k = (e+1)d + 2k.$$  By the above inequality, to establish that $|Z| \ge d(e+1)$ suffices to establish that $2k - \sum_i d(C_i) \ge 1$. We note that 
$\sum_i d(C_i)$ is twice the number of edges in the intersection graph. As a result, $2k - \sum_i d(C_i)$ is twice the Euler characteristic of the intersection graph. Since the intersection graph is a forest, the Euler characteristic is always $\ge 1$.

\end{proof}
\section{Preparatory lemmas}
The purpose of this section is to state and prove Lemma \ref{contcurve}  which will be used later on in the paper to prove our theorems on convergence of the motives of discriminant complements. It is essentially a corollary of Theorem \ref{mainCB} .

\begin{lem}\label{contcurve}
Let $d \gg e >0$. Let $f$ be as in Theorem \ref{mainCB} Let $Z \subseteq \PP^n$ be a reduced zero dimensional subscheme.  Suppose $2|Z| \le ed - f(e) $.  Suppose $h^1(\PP^n ,I^2_Z(d)) \neq 0$. Then there is a function $f$  such that if $2|Z| \le ed - f(e)$, then there is a subset $Z' \subseteq Z$ and a curve $C$ of degree $\le e$ such that $h^1(\PP^n ,I^2_Z(d)) = h^1(\PP^n, I^2_{Z'}(d)) =h^1(N_{\PP^n}(C), I^2_{Z'}(d))$. Furthermore the curve of minimal degree satisfying the above conditions is unique.

\end{lem}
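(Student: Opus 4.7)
The strategy is to isolate within $Z$ a minimal subset $Z'$ retaining the entire $h^1$ contribution, show $Z'$ is Cayley-Bacharach for $\OO(\lfloor d/2\rfloor)$, and apply Theorem~\ref{mainCB} at that degree to place $Z'$ on a curve of degree at most $e$; the factor of $2$ in the hypothesis $2|Z|\le ed-f(e)$ is precisely what accommodates this halving. First, iteratively remove from $Z$ any point whose deletion preserves $h^1(\PP^n,I^2_{\cdot}(d))$, obtaining $Z'\subseteq Z$ with $h^1(\PP^n,I^2_{Z'}(d))=h^1(\PP^n,I^2_Z(d))$ and such that deleting any $p\in Z'$ strictly decreases this quantity. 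From the long exact sequence associated to $0\to I^2_{Z'}\to I^2_{Z'\setminus\{p\}}\to Q_p\to 0$, with $Q_p$ the length-$(n+1)$ skyscraper at $p$ recording the $1$-jet, this minimality is equivalent to the jet-evaluation map $H^0(\PP^n,I^2_{Z'\setminus\{p\}}(d))\to H^0(Q_p(d))$ being non-surjective for every $p\in Z'$.

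The heart of the proof is showing $Z'$ is Cayley-Bacharach for $\OO(\lfloor d/2\rfloor)$. Suppose otherwise: there exist $p\in Z'$ and $F\in H^0(\PP^n,\OO(\lfloor d/2\rfloor))$ with $F|_{Z'\setminus\{p\}}=0$ and $F(p)\neq 0$. For any $G\in H^0(I_{Z'\setminus\{p\}}(\lceil d/2\rceil))$, the product $FG$ lies in $H^0(I^2_{Z'\setminus\{p\}}(d))$ since both factors vanish on $Z'\setminus\{p\}$, and its $1$-jet at $p$ equals $(F(p)G(p),\,F(p)\,dG(p)+G(p)\,dF(p))$. As a linear map from $(G(p),dG(p))\in\mathbb{F}^{n+1}$ to $1$-jets at $p$, this has determinant $F(p)^{n+1}\neq 0$ and hence is an isomorphism, so if $(G(p),dG(p))$ ranges over all of $\mathbb{F}^{n+1}$ as $G$ varies---which follows via Castelnuovo--Mumford regularity from the bound $|Z'|\le(ed-f(e))/2$ being much smaller than $\binom{\lceil d/2\rceil+n}{n}$ for $d\gg e$---the resulting $FG$'s exhaust every $1$-jet at $p$, making the jet map surjective and contradicting minimality. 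With $Z'$ thus Cayley-Bacharach for $\OO(\lfloor d/2\rfloor)$ and $|Z'|\le(ed-f(e))/2<e\lfloor d/2\rfloor-\tilde f(e)$ for a suitably enlarged function $\tilde f$, Theorem~\ref{mainCB} yields a curve $C\subseteq\PP^n$ of degree at most $e$ containing $Z'$.

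For the equality $h^1(\PP^n,I^2_{Z'}(d))=h^1(N(C),I^2_{Z'}(d))$, consider the restriction sequence $0\to\mathcal{K}\to I^2_{Z'}\to (I^2_{Z'})|_{N(C)}\to 0$ with $\mathcal{K}=I^2_{Z'}\cdot I_C^2$; since $I_C^2/\mathcal{K}$ is $0$-dimensional and supported on $Z'$, Castelnuovo--Mumford regularity for $d\gg e$ gives $H^i(\PP^n,\mathcal{K}(d))=0$ for $i=1,2$, which collapses the long exact sequence to the desired isomorphism. For uniqueness, if $C_1\neq C_2$ were two minimal-degree curves satisfying the conclusion then $Z'\subseteq C_1\cap C_2$ and Bezout force $|Z'|\le (\deg C_1)(\deg C_2)\le e^2$, contradicting the lower bound $|Z'|\gtrsim ed/2$ implied by $h^1(I^2_{Z'}(d))\neq 0$ for $Z'$ on a curve of degree at most $e$ (via Riemann--Roch on the normalization). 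The central obstacle is the middle paragraph: the polynomial product $FG$ converts Cayley-Bacharach failure at degree $\lfloor d/2\rfloor$ into $1$-jet-map surjectivity at degree $d$, and the factor of $2$ in the hypothesis is what makes this halving of degree feasible.
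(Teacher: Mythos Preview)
Your overall strategy---reduce to a minimal $Z'$, show it is Cayley--Bacharach for degree roughly $d/2$, then invoke Theorem~\ref{mainCB}---is different from the paper's route, which instead cites the Geramita--Gimigliano--Pitteloud theorem (if $h^1(I_Z^2(d))\neq 0$ then $h^1(I_Z(\lceil d/2\rceil))\neq 0$) to produce a Cayley--Bacharach subset, places it on a curve $C$, sets $Z'=Z\cap C$, and then \emph{iterates}: if $h^1(I_Z^2(d))>h^1(I_{Z'}^2(d))$ one shows the complement $Z\setminus Z'$ again contains a Cayley--Bacharach subset (Proposition~\ref{CBdec}) and enlarges $C$. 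Your approach would avoid this iteration, which is attractive, but the execution has two genuine gaps.

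\textbf{The Cayley--Bacharach step.} Your justification that $(G(p),dG(p))$ ranges over all of $\FF^{n+1}$---``Castelnuovo--Mumford regularity from the bound $|Z'|\le (ed-f(e))/2$ being much smaller than $\binom{\lceil d/2\rceil+n}{n}$''---is incorrect. Comparing the length of a zero-dimensional scheme to $h^0(\OO(m))$ never by itself forces $m$-regularity: $m+2$ collinear points in $\PP^2$ fail to impose independent conditions on $\OO(m)$ regardless of how large $h^0(\OO(m))$ is. The very hypothesis $h^1(I^2_Z(d))\neq 0$ says your points are in special position, so this kind of dimension count is exactly what you cannot use. When $d$ is odd you can repair the argument by taking $G=F\cdot L$ for linear $L$, but when $d$ is even you only have $F$ itself in degree $d/2$ and the products $FG$ need not fill out the $1$-jet space at $p$. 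Proving they do (or that some other element of $H^0(I^2_{Z'\setminus\{p\}}(d))$ completes the span) is essentially a pointwise refinement of the GGP theorem and is not elementary; the paper sidesteps this entirely by quoting GGP as a black box.

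\textbf{Uniqueness.} Your argument assumes that if $C_1,C_2$ are two minimal-degree curves satisfying the conclusion then both contain your particular $Z'$, and then applies B\'ezout. But the statement only asserts the existence of \emph{some} $Z'\subseteq Z$ with the required $h^1$ equality for each curve; different curves can come with different subsets. The paper's uniqueness proof accordingly works with $Z_i=Z\cap C_i$ and shows via a linear-algebra argument that $h^1(I^2_{Z_1\cap Z_2}(d))$ must already equal $h^1(I^2_Z(d))$, forcing $C_1\cap C_2$ to contain a curve of smaller degree. Even granting a common $Z'$, your B\'ezout bound $|Z'|\le e^2$ only holds when $C_1,C_2$ share no component, which you have not arranged.
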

Before embarking on the proof of lemma, we will need  a few propositions.
\begin{prop}\label{contCB}
    Assume that  $\FF$  is algebraically closed. Let $Z \subseteq \PP^n$ be a reduced zero dimensional subscheme. Suppose that the restriction map   $H^0(\PP^n, \OO(d)) \to H^0(Z, \OO(d))$ is not surjective.  Then $Z$ has a subset $Z'$ that is Cayley-Bacharach for $\OO (d)$.
\end{prop}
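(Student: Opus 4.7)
The plan is to build the Cayley-Bacharach subset by a minimality argument, exploiting the equivalence between failure of surjectivity and the non-vanishing of $h^1$ of an ideal sheaf.

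First I would reformulate the hypothesis cohomologically. Since $Z$ is a zero-dimensional reduced subscheme, the short exact sequence $0 \to I_Z(d) \to \OO_{\PP^n}(d) \to \OO_Z(d) \to 0$ combined with the vanishing $h^1(\PP^n, \OO_{\PP^n}(d)) = 0$ shows that the restriction $H^0(\PP^n,\OO(d)) \to H^0(Z,\OO(d))$ is surjective if and only if $h^1(\PP^n, I_Z(d)) = 0$. So the hypothesis is just that $h^1(\PP^n, I_Z(d)) > 0$.

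The key step is to choose $Z' \subseteq Z$ \emph{minimal} (with respect to inclusion) among subsets satisfying $h^1(\PP^n, I_{Z'}(d)) > 0$. Such a $Z'$ exists and is nonempty because $Z$ itself is a candidate, $Z$ is finite, and the empty set is never a candidate (its ideal sheaf is $\OO_{\PP^n}$). I would then claim $Z'$ is Cayley-Bacharach for $\OO(d)$.

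To verify the Cayley-Bacharach property, pick any $p \in Z'$ and consider the short exact sequence of sheaves
\[
0 \to I_{Z'}(d) \to I_{Z' \setminus \{p\}}(d) \to \OO_p \to 0,
\]
which gives the cohomology fragment
\[
H^0(\PP^n, I_{Z' \setminus \{p\}}(d)) \to \FF \to H^1(\PP^n, I_{Z'}(d)) \to H^1(\PP^n, I_{Z' \setminus \{p\}}(d)) \to 0.
\]
By minimality of $Z'$, the rightmost term vanishes, while $H^1(\PP^n, I_{Z'}(d))$ is nonzero. This forces the connecting map $\FF \to H^1(\PP^n, I_{Z'}(d))$ to be injective, hence the map $H^0(\PP^n, I_{Z' \setminus \{p\}}(d)) \to \FF$ is zero. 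Unraveling, every section of $\OO(d)$ that vanishes on $Z' \setminus \{p\}$ also vanishes at $p$. Since $p \in Z'$ was arbitrary, this is exactly the Cayley-Bacharach condition for $Z'$.

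The argument is essentially a one-line diagram chase once minimality is invoked, so I do not anticipate a hard step; the only thing to be careful about is confirming that the minimal subset is nonempty and that the hypothesis "reduced" is used correctly so that $\OO_p$ really is a skyscraper of length one in the exact sequence (this is automatic since $Z$ is reduced and zero-dimensional, so removing a closed point yields another reduced subscheme with quotient $\OO_p$).
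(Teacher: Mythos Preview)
Your proof is correct and follows essentially the same minimality approach as the paper: both pick a minimal $Z' \subseteq Z$ for which the restriction map fails to surject and then verify Cayley--Bacharach pointwise. The only cosmetic difference is that the paper argues by a direct dimension count (the image of $H^0(\PP^n,\OO(d))$ in $H^0(Z',\OO(d))$ has dimension $<|Z'|$ yet surjects onto each $H^0(Z'',\OO(d))$ of dimension $|Z'|-1$), whereas you phrase the same step via the long exact sequence for $0 \to I_{Z'}(d) \to I_{Z'\setminus\{p\}}(d) \to \OO_p \to 0$.
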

\begin{proof}
Let $Z' \subseteq Z$ be such that $$H^0(\PP^n, \OO(d)) \to H^0(Z', \OO(d))$$ is not surjective and for any proper subset $Z'' \subseteq Z'$, the restriction map  $$H^0(\PP^n, \OO(d)) \to H^0(Z'', \OO(d))$$ is surjective. Clearly, such a $Z'$ exists as the restriction map is surjective for singleton sets. Let us now take any subset $Z'' \subseteq Z'$ such that $|Z' \setminus Z''| =1$. 

Since $H^0(\PP^n, \OO(d)) \to H^0(Z', \OO(d))$ is  not surjective,  $$ \dim Im(H^0(\PP^n, \OO(d)) \to H^0(Z', \OO(d))) < \dim H^0(Z', \OO(d))) = |Z'|.$$
However $Im(H^0(\PP^n, \OO(d)) \to H^0(Z', \OO(d)))$ surjects onto $$Im(H^0(\PP^n, \OO(d)) \to H^0(Z'', \OO(d))) = H^0(Z'', \OO(d))$$ which is of dimension $|Z''| =|Z'| -1$. However, since the dimension of the source  is at most $|Z'| -1$, the restrcition map is forced to be an isomorphism, which establishes that $Z'$
is Cayley Bacharach.

\end{proof}

\begin{prop}\label{d/2}
Assume that $\FF$  is algebraically closed. 
Let $Z \subseteq \PP^n$ be a reduced zero dimensional subscheme. Suppose   $h^1(\PP^n ,I^2_Z(d)) \neq 0$. Then $Z$ has a subset $Z'$ that is Cayley-Bacharach for $\OO (\lceil(d/2)\rceil)$.

\end{prop}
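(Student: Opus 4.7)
The plan is to reduce to Proposition \ref{contCB} by showing that the hypothesis $h^1(\PP^n, I_Z^2(d)) \neq 0$ forces the restriction map at the lower degree $e := \lceil d/2 \rceil$ to be non-surjective. Once $h^1(I_Z(e)) \neq 0$ is established, the restriction $H^0(\PP^n, \OO(e)) \to H^0(Z, \OO(e))$ fails to be surjective, and Proposition \ref{contCB} yields a Cayley--Bacharach subset of $Z$ for $\OO(e)$.

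I will prove the contrapositive of the key reduction: assume $h^1(I_Z(e)) = 0$, and derive $h^1(I_Z^2(d)) = 0$. First I would observe that $h^1(I_Z(k))$ is weakly decreasing in $k$: intersecting with a general hyperplane $H$ avoiding $Z$ gives the short exact sequence $0 \to I_Z(k) \to I_Z(k+1) \to \OO_H(k+1) \to 0$, and the vanishing $H^1(\OO_H(k+1)) = 0$ forces surjectivity of $H^1(I_Z(k)) \to H^1(I_Z(k+1))$. Since $d \geq e$, this yields $h^1(I_Z(d)) = 0$. Tensoring the conormal short exact sequence $0 \to I_Z^2 \to I_Z \to I_Z/I_Z^2 \to 0$ with $\OO(d)$ and using that $I_Z/I_Z^2 \cong \bigoplus_{p \in Z} T_p^*\PP^n$ for a reduced zero-dimensional subscheme, the associated long exact sequence becomes
$$H^0(I_Z(d)) \xrightarrow{\delta} \bigoplus_{p \in Z} T_p^*\PP^n \to H^1(I_Z^2(d)) \to 0,$$
so the task reduces to showing that the derivative map $\delta: F \mapsto (dF(p))_{p \in Z}$ is surjective.

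For each $p \in Z$, the hypothesis provides a section $f_p \in H^0(\OO(e))$ with $f_p(p) = 1$ and $f_p|_{Z \setminus p} = 0$. The key construction is then to form $F_{p,v} = f_p \cdot h_{p,v}$ for $h_{p,v} \in H^0(I_Z(d - e))$ chosen with $dh_{p,v}(p) = v$. Since $f_p$ vanishes on $Z \setminus p$ and $h_{p,v}$ vanishes on all of $Z$, the product lies in $H^0(I_Z(d))$, vanishes to order two at every $q \in Z \setminus p$ (so $dF_{p,v}(q) = 0$), and has derivative $f_p(p) dh_{p,v}(p) = v$ at $p$. Summing such products over $p \in Z$ and linear combinations over $v$ exhibits surjectivity of $\delta$, contradicting $h^1(I_Z^2(d)) \neq 0$.

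The main obstacle will be producing $h_{p,v}$: one needs the derivative map $H^0(I_Z(d-e)) \to T_p^*\PP^n$ to be surjective at every $p$, equivalently $h^1(I_{Z \cup N(p)}(d - e)) = 0$ for the first infinitesimal thickening of $Z$ at $p$. The hard part is deducing this from the hypothesis $h^1(I_Z(e)) = 0$ alone; the rest of the argument is essentially linear algebra via the product map. I would handle this either by (i) iterating the product construction, forming sections $f_p \cdot L$ for linear forms $L$ vanishing at $p$ and using that $L$ enlarges the derivative image at $p$ by an arbitrary element of $T_p^*$, combined with monotonicity of $h^1$ to pass between the relevant degrees, or (ii) by a separate dual analysis: any nonzero functional $\xi = \sum_p (a_p \delta_p + v_p \cdot D^p)$ in the cokernel of $\delta$ can be tested on products $g_1 g_2$ with $g_i \in H^0(\OO(e))$, and the resulting identities force both the ``derivative part'' $v_p$ to lie in the annihilator of $\{dh(p) : h \in H^0(I_Z(e))\}$ and the ``value part'' $(a_p)$ to give a nontrivial value relation — contradicting $h^1(I_Z(e)) = 0$ in the case the $v_p$ all vanish, and otherwise reducing to a lower-dimensional problem which can be finished by induction.
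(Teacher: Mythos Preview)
Your overall structure matches the paper exactly: reduce to Proposition~\ref{contCB} by establishing that $h^1(I_Z(\lceil d/2\rceil))\neq 0$, then extract a Cayley--Bacharach subset. The difference is that the paper handles the key implication
\[
h^1(\PP^n,I_Z^2(d))\neq 0 \ \Longrightarrow\ h^1(\PP^n,I_Z(\lceil d/2\rceil))\neq 0
\]
by a direct citation of Theorem~1.1 of \cite{GGP}, whereas you attempt to reprove it from scratch via the conormal sequence and products $f_p\cdot h_{p,v}$.

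The gap you flag is real and is not closed by either of your suggested fixes. The construction needs, for each $p\in Z$, a section $h\in H^0(I_Z(d-e))$ with prescribed $dh(p)$; equivalently, surjectivity of the derivative map $H^0(I_Z(d-e))\to T_p^*\PP^n$. This is \emph{not} a consequence of $h^1(I_Z(e))=0$: note $d-e=\lfloor d/2\rfloor\le e$, so monotonicity of $h^1(I_Z(k))$ goes the wrong way, and even at degree $e$ the vanishing $h^1(I_Z(e))=0$ says nothing about derivatives. Your fix~(i) produces $f_p\cdot L$ in degree $e+1$, not $d-e$, and in any case $d(f_pL)(q)=L(q)\,df_p(q)$ is typically nonzero at $q\neq p$, so these are not ``delta'' sections. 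Fix~(ii) is only a plan; when you test the annihilating functional on products $g_1g_2$, the resulting relations show that the cokernel of $\delta_e$ in degree $e$ is nontrivial, which is \emph{not} the same as $h^1(I_Z(e))\neq 0$ and gives no contradiction.

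What you are trying to prove is precisely the content of the Geramita--Gimigliano--Pitteloud/Chandler bound on the regularity of $I_Z^2$ in terms of that of $I_Z$. It is a genuine theorem with its own argument (typically via a careful analysis of the multiplication map $H^0(I_Z(a))\otimes H^0(\OO(b))\to H^0(I_Z(a+b))$ together with a filtration or Koszul-type input), and your sketch does not reproduce it. Either cite \cite{GGP} as the paper does, or expect to do substantially more work at this step.
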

\begin{proof}
    We observe that Theorem 1.1 of \cite{GGP} of implies that if $h^1(\PP^n ,I^2_Z(d)) \neq 0$, then $h^1(\PP^n ,I_Z(\lceil(d/2)\rceil) \neq 0$. Associate to the short exact sequence  $$0 \to I_Z(\lceil(d/2)\rceil) \to \OO(\lceil(d/2)\rceil) \to \OO_Z(\lceil(d/2)\rceil)\to 0$$, i we have a long exact sequence in cohomology, part of which is as follows:
   $$ H^0(\PP^n, \OO(\lceil(d/2)\rceil)) \to H^0(Z, \OO(\lceil(d/2)\rceil)) \to H^1(\PP^n, I_Z(\lceil(d/2)\rceil)) \to H^1(\PP^n, \OO(\lceil(d/2)\rceil)))$$ 
   But $H^1(\PP^n, \OO(\lceil(d/2)\rceil))) \neq 0,$.
   As a result the fact that  $h^1(\PP^n ,I_Z(\lceil(d/2)\rceil) \neq 0$implies that the map $H^0(\PP^n \OO(\lceil(d/2)\rceil)) \to H^0(Z \OO(\lceil(d/2)\rceil))$ can not be surjective.
    
      We then apply Proposition \ref{contCB} to conclude that $Z$ has a subset $Z'$ that is Cayley-Bacharach for $\OO (\lceil(d/2)\rceil)$.
\end{proof}
\begin{prop}\label{CBdec}
Let $e, n>0$. Let $Z \subseteq \PP^n$ be a reduced set of points.  Let $C$ be a curve of degree $e$. Let $d \gg e,n$. Suppose   $h^1(\PP^n ,I^2_Z(d)) \neq 0$. Let $Z' = Z \cap C$. Suppose $$h^1(\PP^n, I^2_Z(d)) > h^1(\PP^n, I^2_{Z'}(d)).$$ Let $Z'' = Z \setminus Z'$. Then $H^0(\PP^n, \OO(d-2e)) \to H^0(N(Z''), \OO(d-2e))$ is not surjective.
\end{prop}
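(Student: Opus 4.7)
The plan is to combine a short exact sequence in cohomology with the construction of a specific degree-$e$ form vanishing on $C$ and nonzero on $Z''$.

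First, from the short exact sequence
$$0 \to I^2_Z(d) \to I^2_{Z'}(d) \to \OO_{N(Z'')}(d) \to 0$$
(the quotient is $\OO_{N(Z'')}$: at every $p \in Z'$ the sheaves $I_{Z'}$ and $I_Z$ have identical stalks, while at each $p \in Z''$ the stalk of $I^2_{Z'}$ is the whole local ring and that of $I^2_Z$ is $\mathfrak{m}_p^2$), and using $H^1(N(Z''), \OO(d))=0$, the associated long exact sequence translates the hypothesis $h^1(I^2_Z(d))>h^1(I^2_{Z'}(d))$ into the non-surjectivity of the restriction $H^0(\PP^n, I^2_{Z'}(d)) \to H^0(N(Z''), \OO(d))$.

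Next I would produce a form $F \in H^0(\PP^n, I_C(e))$ that is nonzero at every point of $Z''$. For $n = 2$ one takes $F$ to be a defining equation of the plane curve $C$. For $n \geq 3$, let $S_p$ denote the cone over $C$ with apex $p \in Z''$, a surface in $\PP^n$; choose a generic $(n-3)$-plane $L$ disjoint from $C \cup Z''$ and from each $S_p$ (such $L$ exists by a dimension count, since a generic $(n-3)$-plane misses a surface in $\PP^n$). Let $\pi : \PP^n \dashrightarrow \PP^2$ be projection from $L$. Then $\pi(C)$ is a plane curve of degree $e$ (for generic $L$), cut out by a single polynomial $F_0$ of degree $e$, and $F = \pi^* F_0$ is a degree-$e$ form on $\PP^n$ vanishing on $C$. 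Since the avoidance of $S_p$ prevents any $p \in Z''$ from sharing a $\pi$-fiber with a point of $C$, one has $\pi(Z'') \cap \pi(C) = \varnothing$, so $F$ does not vanish at $Z''$. Should $\deg \pi(C)$ drop below $e$, one multiplies $F_0$ by a linear form not vanishing at $\pi(Z'')$.

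To finish I would argue by contradiction: suppose $H^0(\PP^n, \OO(d-2e)) \to H^0(N(Z''), \OO(d-2e))$ is surjective. Since $F^2 \in I_C^2(2e) \subseteq I^2_{Z'}(2e)$, multiplication by $F^2$ carries $H^0(\PP^n, \OO(d-2e))$ into $H^0(\PP^n, I^2_{Z'}(d))$; because the maximal ideal at each $p \in Z''$ squares to zero in $\OO_{N(Z''), p}$ and $F(p) \neq 0$, $F^2$ is a unit there, so multiplication by $F^2$ gives an isomorphism $H^0(N(Z''), \OO(d-2e)) \xrightarrow{\sim} H^0(N(Z''), \OO(d))$. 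The composition $H^0(\PP^n, \OO(d-2e)) \to H^0(\PP^n, I^2_{Z'}(d)) \to H^0(N(Z''), \OO(d))$ is therefore surjective, contradicting the first step. The main (mild) obstacle is securing the form $F$ of degree exactly $e$ nonvanishing on $Z''$, which the generic projection construction handles.
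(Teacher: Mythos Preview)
Your proof is correct and follows essentially the same approach as the paper: both argue by contradiction, multiplying $H^0(\PP^n,\OO(d-2e))$ by the square of a degree-$e$ form vanishing on $C$ but nonvanishing on $Z''$ to force $h^1(I^2_Z(d))=h^1(I^2_{Z'}(d))$. The paper simply asserts the existence of such a divisor $D\supseteq C$ avoiding $Z''$ and uses a direct-sum decomposition of $H^0(N(Z),\OO(d))$, whereas you supply the explicit projection construction of $F$ and phrase the linear algebra via the short exact sequence $0\to I^2_Z(d)\to I^2_{Z'}(d)\to \OO_{N(Z'')}(d)\to 0$; these are cosmetic differences, and your version in fact fills in a detail (why such an $F$ of degree exactly $e$ exists) that the paper leaves implicit.
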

\begin{proof}

    Let $D \subseteq \PP^n$ be a divisor containing $C$  not containing any point of $Z''$. Let $|D|$ denote the degree of $D$.  
    Suppose for the sake of contradiction that $H^0(\PP^n, \OO(d - 2|D|)$ surjects onto $H^0(N(Z''), \OO(d-2|D|)$. Let $f \in H^0(\PP^n, |D|)$ be a section whose zero locus is $D$.  We have a subspace $$f^2 \otimes H^0(\PP^n, \OO(d-2|D|) \subseteq H^0(\PP^n, \OO(d))$$ that surjects onto $H^0(Z'', \OO(d))$ and maps to $0$ in $H^0(N(Z'), \OO(d))$. We note that $H^0(N(Z), \OO(d)) \cong H^0(N(Z'), \OO(d)) \oplus H^0(N(Z''), \OO(d)).$ Some elementary linear algebra then implies that the codimension of the image of $H^0(\PP^n, \OO(d))$ in $H^0(N(Z), \OO(d))$ is equal to the codimension of the image of $H^0(\PP^n, \OO(d))$ in $H^0(N(Z'), \OO(d))$, since the map surjects onto the other factor. This contradicts the assumption that $$h^1(\PP^n, I^2_Z(d)) > h^1(\PP^n, I^2_{Z'}(d)).$$
\end{proof}

We now proceed to prove Lemma \ref{contcurve}

\begin{proof}[Proof of Lemma \ref{contcurve}]
Let us first prove the lemma under the assumption that our base field $\FF$ is algebraically closed.
We begin by applying Proposition \ref{contCB} to conclude that $Z$ contains a subset $Z_0$ that is Cayley Bacharach for $\OO(\lceil d/2 \rceil)$. We then apply Theorem \ref{mainCB} to conclude that there is a curve $C$ that contains $Z_0$. Let $Z' = Z \cap C$. If $h^1(\PP^n , I^2_Z(d)) = h^1(\PP^n, I^2_{Z'}(d)) $ we are done and $C$ is our required curve. If not, we proceed as follows:
\begin{enumerate}
    \item By Proposition \ref{CBdec} $Z'' = Z \setminus Z'$ must satisfy $h^1(\PP^n, I^2_{Z''}(d-2e) >0$. Hence it must contain a Cayley Bacharach subset $Z'''$ for $\OO(\lceil d/2\rceil-e)$, which is contained on a curve $C'$.
    \item We replace our original curve $C$ by $C \cup C'$ and replace $Z'$ by $Z \cap (C \cup C')$. 
    \item We repeat steps 1 and 2 until $h^1(\PP^n , I^2_Z(d)) = h^1(\PP^n, I^2_{Z'}(d)) $. Since $|Z'|$ increases every time we perform step $2$ and  $|Z'| < |Z|$ this process terminates.
\end{enumerate}
We then have our required curve $C$ and subset $Z'$.

If our base field $\FF $ is not algebraically closed, we may pass to  the algebraic closure $\bar \FF$ and then find a curve $C$ defined over $\bar\FF$ containing $Z$ and of degree $\le e$. We will prove that $C$ is defined over $\FF$ which will complete the proof of the lemma. Let $g \in Gal(\bar \FF, \FF)$. We claim that $g(C) = C$. If $g(C) \neq C$, there is some irreducible component $C_i \subseteq C$ such that $g(C_i) \neq C_i$. But $ Z \cap C_i \subseteq |g(C_i) \cap C_i | \le \deg (C_i)^2 \le e^2.$ However, $Z \cap C_i \ge e^2 $ (see proof of Proposition \ref{curveonly}). Hence for $d \gg e$, $g(C_i) = C_i$ and hence $C$ is defined over $\FF$. This completes the proof of existence.

Let us now turn to proving uniqueness. It suffices to prove uniqueness over an algebraically closed field. Suppose that $C_1, C_2$ are curves of the same degree, with $\deg (C_i) \le e \ll d$, such that $h^1(\PP^n ,I^2_Z(d)) = h^1(\PP^n, I^2_{Z_i}(d)) = l$ where $Z_i = Z \cap C_i$ and no curve of strictly smaller degree satisfies the conditions of the lemma. Let $W = Z_1 \cap Z_2$.  Suppose $h^1(\PP^n, I^2_{W}(d)) < h^1(\PP^n, I^2_Z(d)) =l$. We will show that this implies that $h^1(\PP^n, I^2_Z(d)) >h^1(\PP^n, I^2_{Z_i}(d))$ which contradicts our assumption. Let $Z_i' = Z_i \setminus W$. Let $U \subseteq H^0(Z_1' \cup Z_2' \cup W, \OO(d))$ be the image of $H^0(\PP^n ,\OO(d))$ which is of codimension $l$. Let $\pi_i: H^0(Z, \OO(d)) \to H^0(Z_i, \OO(d))$ be the restriction map. Let $\pi: H^0(Z, \OO(d)) \to H^0(W, \OO(d))$ be the restriction map. By our assumption, $ \pi_i(U)$ is of codimension $l$ in $H^0(Z_i, \OO(d))$. Therefore $U \subseteq \pi_i^{-1}(\pi_i)(U)$ and both are of codimension $l$ in $H^0(Z, \OO(d))$. Hence we have $$U = \pi_1^{-1}(\pi_1(U)) = \pi_2^{-1}(\pi_2(U)).$$

The only way the above equation can hold is if $U = \pi^{-1}(\pi(U))$ and as a result

$\pi(U)$ is also of codimension $l$, which contradicts $h^1(\PP^n, I^2_{Z_0}(d)) < h^1(\PP^n, I^2_Z(d)) =l$.

Hence $h^1(\PP^n, I^2_{Z_0}(d)) = h^1(\PP^n, I^2_Z(d)) =l > 0$. However in this case we note that $Z_0  \subseteq C_1 \cap C_2$. Let $C_1 \cap C_2 = C \cup \Gamma$ where $C$ is a reduced curve and $\Gamma$ is a finite set of points. Either $h^1(\PP^n, I^2_{Z \cap C}(d)) = h^1(\PP^n, I^2_Z(d))$, in which case $C$ is a curve of smaller degree than $C_1$ or $C_2$ satisfying the conditions of the lemma. Otherwise $h^1(\PP^n, I^2_{Z \cap C}(d)) < h^1(\PP^n, I^2_{Z_0}(d))$ However, this implies that $Z_0 \setminus (Z \cap C) \subseteq \Gamma$ contains a Cayley Bacharach set for $\OO(d - 2 \deg C)$ by the combination of Proposition \ref{CBdec} and Proposition \ref{d/2}. But $|\Gamma | \le e^2$ and a Cayley Bacharach set for $\OO(d - 2 \deg C)$ must contain at least $d-2e $  points. Since $d \gg e$, we are done.

\end{proof}
\section{Proof of Theorem \ref{mainrate}}

In this section we prove Theorem \ref{mainrate}. 

We will begin by reviewing some material in section 2 of \cite{VW}.

\begin{defn}
    Let $S$ be a nonempty finite set. An ordered partition $\lambda$ of the set $S$ is a tuple $(A_1, \dots ,A_m)$, where the $A_i$ are nonempty disjoint subsets of $S$ such that $A_1 \cup  \dots \cup A_m =S$. 
    The empty set $\varnothing$ has a single ordered partition called the empty partition and  also denoted $\varnothing$.  We will denote the ordered partition of $\{1, \dots, n\}$ with only one set by $[n]$.
\end{defn}

We can associate two numbers to an ordered partition $\lambda$. If $\lambda = (A_1, A_2, \dots. A_m)$ is an ordered partition of a nonempty finite set $S $, we define $|\lambda| =|S|$ and $||\lambda|| =m$. For the empty partition $\varnothing$, we define $||\varnothing||= |\varnothing| =0$.

In what follows we will have to consider sums over ordered partitions of a certain length. By $\sum_{|\lambda| =k} f(\lambda)$ we mean that we sum the values of $f(\lambda)$ as $\lambda$ ranges over all ordered partitions of $\{1, \dots, k\}$.

\begin{defn}
    . Let $X$ be a quasiprojective variety over $\FF$. Let $S$ be a nonempty finite set. Let $\lambda = (A_1, A_2, \dots, A_m) $ be an ordered partition of $S$.  We define $\Sym^{\lambda} (X) := \prod_{i=1}^m \Sym^{|A_i|}(X)$. 
     
     There is a quotient map $\pi: X^{|S|} \to \Sym^{\lambda}(X)$
     We define the $\lambda$ configuration space of $X$,  $w_{\lambda}(X) \subseteq \Sym^{\lambda}(X)$ to be $\pi(X^{|S|}\setminus \Delta)$, where $\Delta$ is the big diagonal.

     We define $\Sym^{\varnothing}(X) =w_{\varnothing}(X)= \Spec\textrm{ } \FF$.
\end{defn}

If $\lambda$ is an ordered partition and $X$ is a quasiprojective variety, any $Z \in w_{\lambda}(X)$ defines a reduced zero dimensional subscheme of $X$ which we will denote $\supp(Z)$. For any subvariety $Y \subseteq X$, we say $Z \subseteq Y$ if $\supp(Z) \subseteq Y$.

There is a forgetful covering map $w_{\lambda}(X) \to w_{|\lambda|}(X)$ defined by $Z \mapsto \supp(Z).$

\begin{defn}
    Let $X$ be a quasi projective variety over $\FF$. Let $\LL$ be a line bundle on $X$. We define $$W_{\ge \lambda}(X) = \{(f,Z) \in H^0(X,\LL) \times w_{\lambda}(X)| Z \subseteq \textrm{Sing}(f)\}.$$
    For $N \ge 0$, we define $$W_{\lambda, \ge N}(X):= \{(f,Z) \in W_{\ge \lambda}(X)| |\textrm{Sing}(f)| \ge N\}.$$

    We define $$W_{ \lambda}(X) := \{(f,Z) \in W_{\ge \lambda}(X)| |\lambda|=|\textrm{Sing}(f)|\}.$$

\end{defn}
   Our notation does not involve the line bundle $\LL$. We do not believe this will cause confusion as the line bundle $\LL$ will be fixed throughout. We note that $W_{\varnothing}(X) = U(X, \LL)$ the discriminant complement.
\begin{defn}
    Let $S_1 \subseteq S_2$ be two nonempty finite sets. Let $\lambda_i = (A_1^i, A_2^i, \dots, A^i_{m_i})$ be ordered partitions of $S_i$. We say $\lambda_1$ is a subpartition of $\lambda_2$, denoted $\lambda_1 \subseteq \lambda_2$, if there is an increasing function $f:\{1, \dots, m_1\} \to \{1, \dots, m_2\}$ such that $A_j^1 \subseteq A_{f(j)}^2$ for all $j \in \{1, \dots, m_1\}$. If this is the case we may form an ordered partition of $S_3= S_1 \setminus S_2$ as follows. We define $B_i = A_i^2 - A_j^1$ if $i = f(j)$ for $j \in \{1, 2 \dots, m_1\}$, otherwise we define $B_i = A_i$. We define $\lambda_3 $ to be the partition corresponding to the tuple $(B_1, \dots B_{m_2})$ after deleting empty sets.
\end{defn}
In what follows we will need to sum over all subpartitions of a given partition $\lambda$. By $\sum _{\alpha \subseteq \lambda} f(\alpha)$ we mean the sum of $f(\alpha)$ where $\alpha$ ranges over all subpartitions of $\lambda$.

\begin{defn}
    We assume $X$ is a quasi projective variety and $\LL$ is a very ample line bundle on it.
    We define the \emph{forced positive dimensional singularities} subset of $w_{\lambda}(X)$ denoted $w_{\lambda}^p(X)$ by $$w_{\lambda}^p(X) =\{ Z \in w_{\lambda}(X) | \textrm{if } f \in H^0(X, \LL), Z \subseteq \textrm{Sing}(f), \textrm{ then }\dim \textrm{Sing}(f) \ge 1  \}.$$ Let $w_{\lambda}^n(X) = w_{\lambda}(X) \setminus w_{\lambda}^p(X).$

    Let $ W_{\ge \lambda}^p(X) = \{(f,Z) \in W_{\ge \lambda}(X) | Z \in w_{\lambda}^p(X)\}.$   Let $ W_{\ge \lambda}^n(X) =  W_{\ge \lambda}(X) \setminus W_{\ge \lambda}^p(X).$ Let $ W_{ \lambda, \ge N}^p(X) = \{(f,Z) \in W_{ \lambda ,\ge N}(X) | Z \in w_{\lambda}^p(X)\}.$ Let $ W_{ \lambda, \ge N}^n(X) =  W_{ \lambda, \ge N}(X) \setminus W_{ \lambda, \ge N}^p(X).$
\end{defn}

 \begin{prop}
     Let $X, \lambda$ be as above. The set $w_\lambda^p(X)$ is a constructible subset of $w_{\lambda}(X).$
 \end{prop}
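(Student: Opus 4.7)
My plan is to show that the complement $w_\lambda^n(X) = w_\lambda(X) \setminus w_\lambda^p(X)$ is constructible; since constructible sets form a Boolean algebra, this will imply $w_\lambda^p(X)$ is constructible as well. The strategy is to realize $w_\lambda^n(X)$ as the image of a locally closed subset of $W_{\ge \lambda}(X)$ under a projection of finite-type $\FF$-schemes, and then invoke Chevalley's theorem.

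First, I would introduce the universal singular locus $\Sigma \subseteq H^0(X,\LL) \times X$ defined by $\Sigma = \{(f,x) : f(x) = 0,\ df(x) = 0\}$. This is a closed subscheme, and the fiber of the projection $p_1 : \Sigma \to H^0(X,\LL)$ over $f$ is exactly $\Sing(V(f))$. By the upper semicontinuity of fiber dimension for a finite type morphism of Noetherian schemes, the subset
\[
U := \{f \in H^0(X,\LL) : \dim \Sing(V(f)) \le 0\}
\]
is open in $H^0(X,\LL)$. Note that $H^0(X,\LL)$ is a finite dimensional $\FF$-vector space (hence finite type) because $X$ is projective and $\LL$ is coherent.

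Next, I would form the locally closed subset
\[
T := W_{\ge \lambda}(X) \cap \bigl(U \times w_\lambda(X)\bigr)
\subseteq H^0(X,\LL) \times w_\lambda(X),
\]
which is constructible since $W_{\ge \lambda}(X)$ is closed in $H^0(X,\LL) \times w_\lambda(X)$ and $w_\lambda(X)$ is itself constructible (as the image $\pi(X^{|S|} \setminus \Delta)$). Let $\pi_2$ denote the projection to $w_\lambda(X)$. Unwinding the definitions, a point $Z \in w_\lambda(X)$ lies in $\pi_2(T)$ if and only if there exists $f \in H^0(X,\LL)$ with $Z \subseteq \Sing(f)$ and $\dim \Sing(f) \le 0$, which is exactly the definition of $w_\lambda^n(X)$. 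Applying Chevalley's theorem to the finite type morphism $\pi_2$ between Noetherian $\FF$-schemes, $\pi_2(T) = w_\lambda^n(X)$ is constructible, and hence so is its complement $w_\lambda^p(X)$.

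There is no real obstacle here beyond bookkeeping; the only subtle point is matching the condition ``$Z \subseteq \Sing(f)$'' used to define $W_{\ge \lambda}(X)$ (which refers to $\supp(Z)$) with the condition used in the definition of $w_\lambda^p(X)$, but these agree by construction. The key inputs are exactly the two standard facts cited: upper semicontinuity of fiber dimension and Chevalley's theorem on images of constructible sets.
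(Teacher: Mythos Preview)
Your argument is correct and in fact cleaner than the paper's. The paper works on the $w_\lambda^p$ side directly: it introduces, for each $j$, the space $w_{\lambda,j}(X)$ of tuples $(Z,f_1,\dots,f_j)$ with each $f_i$ singular along $Z$, isolates the locus where the $f_i$ form a basis of $H^0(X,I_Z^2\otimes\LL)$ and the common singular locus $\cap_i \Sing(f_i)$ is positive dimensional, shows that locus is constructible via a flattening stratification of the incidence variety $E_{\lambda,j}(X)\to w_{\lambda,j}(X)$, and then takes the union of images over $j$. You instead pass to the complement $w_\lambda^n(X)$ and exhibit it in one step as the image under $\pi_2$ of $W_{\ge\lambda}(X)\cap(U\times w_\lambda(X))$, where $U$ is the open locus of sections with zero-dimensional singular locus; Chevalley then finishes. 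Your route avoids the stratification by $j$ and the flattening argument entirely, at the cost of nothing: the existential quantifier over a single section is exactly what the complement records, so there is no need to parametrize full bases. Both proofs ultimately rest on semicontinuity of fiber dimension and Chevalley's theorem, but yours deploys them more economically.
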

\begin{proof}
    Let $j$ be a positive integer. Let $$w_{\lambda,j}(X) =\{(Z, f_1, \dots, f_j)| Z \in w_{\lambda}(X), f_i \in H^0(X,\LL), Z \subseteq \Sing(f_i)\}.$$ There is a natural forgetful map $w_{\lambda,j}(X) \to w_{\lambda}(X)$. Let $$w_{\lambda,j}^p(X) = \{(Z, f_1 ,\dots f_j)| f_i \textrm{form a basis for } H^0(X, I^2_Z\otimes \LL)\}.$$ Clearly $w_{\lambda}^p(X)$ is the union of the images of $w_{\lambda,j}^p(X)$. Hence it suffices to establish that the $w_{\lambda,j}^p(X)$ are constructible. 

    Let $E_{\lambda,j}(X) = \{(P, Z, f_1, \dots, f_j) \in X \times w_{\lambda^j}(X)| P \in \cap \Sing(f_i)\}.$ There is an obvious forgetful map $\pi: E_{\lambda_j}(X) \to w_{\lambda,j}(X)$. Consider a flattening stratification of $S_1 ,\dots S_k$ $w_{\lambda,j}$ with respect to $\pi.$ We note that $(Z, f_1, \dots, f_j) \in w_{\lambda,j}^p(X)$ iff the $f_i$ form a basis of $H^0(X, I^2_Z\otimes \LL)$ and if $\pi^{-1}((Z, f_1, \dots, f_j))$ is positive dimensional. However the dimension of $\pi^{-1}((Z, f_1, \dots, f_j))$ is constant on each stratum. Hence $w_{\lambda,j}^p(X)$ is just the union of a collection of strata, intersected with the set of $(Z, f_1 ,\dots, f_j)$ where the $f_i$ form a basis of $H^0(X, I^2_Z\otimes \LL)$. However the latter is clearly a constructible set as is each stratum. Hence $w_{\lambda,j}^p(X)$ is the union of an intersection of constructible sets and is hence constructible. 
    \end{proof}

\begin{rem}
    The p and n in the notation $w_{\lambda}^p, w_{\lambda^n}$ is intended to mean positive singularities and not positive dimensional singularities. These variants of the $w_{\lambda}$ are largely used in the final section of this paper and are necessary because we will often want to disregard elements of $w_{\lambda}^p$ for various technical reasons. The necessity of using the $w_\lambda^n$ is almost exclusive to Proposition \ref{geN0} and the reader is advised to not pay too much attention to it at a first reading.
    \end{rem}

\begin{prop}\label{propVW} Let $N$ be a positive integer.
The following equations are true, where both sides are treated as part of $\M_{\Lb}$
    $$W_{\varnothing} = \sum_{|\lambda| \le N} (-1)^{||\lambda||}W_{\ge \lambda} - \sum_{|\lambda| \le N} (-1)^{||\lambda||} W_{\lambda, \ge N+ 1}.$$ 

    $$W_{\varnothing} = \sum_{|\lambda| \le N} (-1)^{||\lambda||}W^n_{\ge \lambda} - \sum_{|\lambda| \le N} (-1)^{||\lambda||} W^n_{\lambda, \ge N + 1}.$$ 
    
\end{prop}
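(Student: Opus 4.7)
The plan is induction on $N$. For the base case $N=0$, the only ordered partition with $|\lambda|\le 0$ is $\varnothing$, and one has $W_{\ge\varnothing}=H^0(X,\LL)$, $W_{\varnothing,\ge 1}=H^0(X,\LL)\setminus U(X,\LL)$, so the identity reduces to $[U(X,\LL)]=[H^0(X,\LL)]-[\Sigma(X,\LL)]$, the defining relation in the Grothendieck ring. An entirely analogous computation handles the base case of the second identity.

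For the inductive step, I would subtract the identity at level $N-1$ from the claimed identity at level $N$. Using additivity together with $[W_{\lambda,\ge N}]-[W_{\lambda,\ge N+1}]=[W_{\lambda,=N}]$, where $W_{\lambda,=N}:=\{(f,Z)\in W_{\ge\lambda}:|\Sing(f)|=N\}$, the problem reduces to showing $\sum_{|\lambda|\le N}(-1)^{||\lambda||}[W_{\lambda,=N}]=0$. Ordered partitions with $|\lambda|>N$ contribute nothing since $|Z|=|\lambda|>N=|\Sing(f)|$ is incompatible with $Z\subseteq\Sing(f)$, so one may freely extend the sum. To prove the vanishing I would stratify $H^0_N:=\{f:|\Sing(f)|=N\}$ finely enough that on each resulting stratum $S$ the universal singular scheme $\{(f,p):p\in\Sing(f)\}$ is a trivial degree-$N$ étale cover of $S$. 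Over such $S$, each $W_{\lambda,=N}|_S$ is then a trivial finite cover of degree $c_{\lambda,N}:=\#\{Z\in w_\lambda(X):Z\subseteq P\}$ for any fixed $N$-point set $P$, giving $[W_{\lambda,=N}|_S]=c_{\lambda,N}\cdot[S]$. The identity is thereby reduced to the combinatorial statement $\sum_{|\lambda|\le N}(-1)^{||\lambda||}c_{\lambda,N}=0$ for $N\ge 1$, which follows from an exponential generating function argument: grouping by the number of parts $m$, the EGF of $c_{\cdot,N}$ is $e^z(e^z-1)^m$ (distribute $N$ labeled points into $m$ ordered nonempty blocks plus a set of unused points), and $\sum_{m\ge 0}(-1)^m(e^z-1)^m=1/e^z$ gives total EGF $e^z\cdot e^{-z}=1$, whose $z^N/N!$ coefficient vanishes for $N\ge 1$.

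Sections $f$ with $\dim\Sing(f)\ge 1$ cause no trouble: such $f$ automatically satisfy $|\Sing(f)|\ge N+1$, so they contribute identically to $[W_{\ge\lambda}]$ and $[W_{\lambda,\ge N+1}]$ and cancel in the difference $[W_{\ge\lambda}]-[W_{\lambda,\ge N+1}]$, never entering the stratification analysis above. The second equation, involving $W^n$ and $w_\lambda^n$, is proved by the identical argument after restricting each configuration space throughout to the open constructible subset $w_\lambda^n(X)\subset w_\lambda(X)$; the combinatorial identity is preserved because it only reflects degrees of trivial covers. The main technical obstacle is the stratification step — arranging a single refinement of $H^0_N$ that simultaneously trivializes the finite étale covers $W_{\lambda,=N}\to H^0_N$ for all $|\lambda|\le N$ (and possibly refining further so that $\Sing(f)$ is scheme-theoretically reduced, to make $c_{\lambda,N}$ a true constant) — but since only finitely many $\lambda$ are involved, a common refinement always exists, after which additivity in $\M_\Lb$ concludes the argument.
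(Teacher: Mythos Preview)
Your argument is correct and amounts to a self-contained reconstruction of the inclusion--exclusion identity that the paper simply attributes to Vakil--Wood: the paper's proof of the first equation is nothing more than a citation to \cite{VW}. Your inductive reduction to the vanishing of $\sum_{|\lambda|\le N}(-1)^{\|\lambda\|}[W_{\lambda,=N}]$, followed by stratification of $\{f:|\Sing f|=N\}$ and the EGF computation, is precisely the combinatorics underlying that result, so in substance you have reproved what the paper imports.

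Where your approach differs is in the second equation. You propose to rerun the entire induction with $w_\lambda^n$ in place of $w_\lambda$. The paper instead observes that for each fixed $\lambda$ one has the equality of classes
\[
W_{\ge\lambda}-W_{\lambda,\ge N+1}\;=\;W^n_{\ge\lambda}-W^n_{\lambda,\ge N+1},
\]
since both sides parametrize pairs $(f,Z)$ with $|\Sing(f)|\le N$; this forces $\dim\Sing(f)=0$, so $f$ itself witnesses $Z\notin w_\lambda^p$, i.e.\ $Z\in w_\lambda^n$ automatically. Hence the second equation follows from the first termwise, with no need to repeat the stratification or the combinatorics. You essentially touch on this in your remark that sections with positive-dimensional singular locus ``cancel in the difference''; pushing that observation one step further yields the paper's one-line shortcut and saves you the parallel run of the argument.
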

\begin{proof}
    The first equation follows from the proof of Proposition 3.7 in \cite{VW}.
    The second follows immediately from the first and the fact that 
    $$W_{\ge \lambda} - W_{\lambda, \ge N- |\lambda| + 1} = W^n_{\ge \lambda} - W^n_{\lambda, \ge N + 1}.$$
\end{proof}

\begin{prop}\label{pm}
    Let $Z$ be a constructible subset of $w_{[n]}(X)$. Let $\pi_{\lambda}: w_{\lambda}(X) \to w_{[|\lambda|]}(X)$ be the covering map. Let $\phi : \M \to \M_{Hdg}$ denote the  specialisation homomorphism.
    Then $$\phi(\sum_{|\lambda| = n}(-1)^{||\lambda||}(\pi_{\lambda}^{-1}(Z))) = H^*_c(Z , \pm \QQ)$$, where $\pm \QQ$ is the restriction of the alternating sheaf on $w_{[n]}(X)$.

    Also, for $k >3$, $H^*_c(w_{k}(\PP^1), \pm \QQ) =H^*_c(w_{k}(\A^1), \pm \QQ) =0.$
\end{prop}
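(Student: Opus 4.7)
Proof plan. Let $q\colon X^n \setminus \Delta \to w_{[n]}(X)$ denote the $S_n$-Galois cover by the ordered configuration space, where $S_n$ acts by permuting coordinates. Each map $\pi_\lambda$ arises as the composition of $q$ with the quotient $X^n \setminus \Delta \to w_\lambda(X)$ by the Young subgroup $S_\lambda \subseteq S_n$ corresponding to $\lambda$. Under the standard equivalence between $\QQ$-local systems on $w_{[n]}(X)$ and finite-dimensional $\QQ[S_n]$-modules, the pushforward $(\pi_\lambda)_\ast \QQ$ corresponds to the induced representation $\mathrm{Ind}_{S_\lambda}^{S_n} \QQ$, while the alternating sheaf $\pm \QQ$ corresponds to the sign representation. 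The first assertion therefore reduces to an identity in the representation ring of $S_n$ expressing $[\mathrm{sgn}_{S_n}]$ as the signed sum $\sum_{|\lambda|=n}(-1)^{||\lambda||}[\mathrm{Ind}_{S_\lambda}^{S_n}\QQ]$; under the Frobenius characteristic map, this is the Jacobi--Trudi expansion of the elementary symmetric function $e_n$ in the complete homogeneous basis $h_\mu$. The formula of the proposition now follows from additivity of $H^\ast_c$ in coefficient sheaves together with the identity $\phi([\pi_\lambda^{-1}(Z)]) = [H^\ast_c(Z, (\pi_\lambda)_\ast \QQ)]$ in $\M^{Hdg}$ for the finite étale cover $\pi_\lambda$.

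For the vanishing statement, pass to the $S_k$-Galois cover $\mathrm{Conf}_k(\A^1) \to w_k(\A^1)$, which identifies $H^\ast_c(w_k(\A^1), \pm \QQ)$ with the sign isotypic component of $H^\ast_c(\mathrm{Conf}_k(\CC), \QQ)$. By Arnold's theorem, $H^\ast(\mathrm{Conf}_k(\CC), \QQ)$ is the Orlik--Solomon algebra of the braid arrangement; the Lehrer--Solomon decomposition writes each graded piece as a direct sum of representations induced from centralizers of specific permutations, and for each summand the inner product with $\mathrm{sgn}_{S_k}$ can be computed via Frobenius reciprocity. In top degree, $H^{k-1}(\mathrm{Conf}_k(\CC),\QQ) \cong \mathrm{sgn}_{S_k}\otimes\mathrm{Lie}(k)$, and Klyachko's theorem identifies $\mathrm{Lie}(k)$ with $\mathrm{Ind}_{C_k}^{S_k}\zeta$ for a primitive character $\zeta$, forcing $\mathrm{Lie}(k)^{S_k}=0$ for $k\ge 2$; an analogous argument for each lower Lehrer--Solomon summand shows its sign isotypic part vanishes in the stated range. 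Poincaré duality on the smooth complex $k$-manifold $\mathrm{Conf}_k(\CC)$ then transports the vanishing to $H^\ast_c$.

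The $\PP^1$ case follows by stratifying $w_k(\PP^1) = w_k(\A^1) \sqcup w_{k-1}(\A^1)$ according to whether the configuration contains $\infty$; since $\mathrm{sgn}_{S_k}|_{S_{k-1}} = \mathrm{sgn}_{S_{k-1}}$, the alternating sheaf restricts to the alternating sheaf on each stratum, and the vanishing on the two affine strata transfers to $\PP^1$. The chief technical obstacle is establishing the sign-isotypic vanishing uniformly across all graded pieces of the Orlik--Solomon algebra: the top degree is handled cleanly by Klyachko, but the lower degrees require the explicit Lehrer--Solomon decomposition together with a case analysis of the restriction of $\mathrm{sgn}_{S_k}$ to the various cyclic centralizers that appear.
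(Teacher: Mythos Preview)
Your treatment of the first assertion is essentially identical to the paper's: both arguments reduce to the identity $\sum_{|\lambda|=n}(-1)^{||\lambda||}(\pi_\lambda)_*\QQ = \pm\QQ$ in the Grothendieck group of local systems on $w_{[n]}(X)$, together with $H^*_c(\pi_\lambda^{-1}(Z),\QQ)=H^*_c(Z,(\pi_\lambda)_*\QQ)$. The paper simply cites Das--Howe for the virtual-representation identity, whereas you unpack it as the Jacobi--Trudi expansion $e_n=\sum_\mu(-1)^{n-\ell(\mu)}h_\mu$ under Frobenius characteristic; these are the same argument at different levels of detail.

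For the vanishing statement your route diverges from the paper's. The paper invokes Vassiliev's Lemma~2 directly, which disposes of both the $\A^1$ and $\PP^1$ cases in one stroke by a short geometric argument. You instead propose to compute the sign-isotypic part of $H^*(\mathrm{Conf}_k(\CC),\QQ)$ via the Lehrer--Solomon decomposition. This is a legitimate alternative and your top-degree step (Klyachko's identification of $\mathrm{Lie}(k)$ and Frobenius reciprocity) is correct, as is the stratification passing from $\A^1$ to $\PP^1$. The cost is that the ``analogous argument for each lower Lehrer--Solomon summand'' is not a one-liner: the centralizer $Z(\mu)$ of cycle type $\mu=(1^{m_1}2^{m_2}\cdots)$ is $\prod_i C_i\wr S_{m_i}$, the Lehrer--Solomon character $\xi_\mu$ is a specific product of characters on these wreath factors, and checking $\langle\xi_\mu,\mathrm{sgn}|_{Z(\mu)}\rangle=0$ genuinely requires tracking those characters against the restriction of $\mathrm{sgn}$ factor by factor. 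You flag this as the chief technical obstacle but do not carry it out, so as written the argument is a plan rather than a proof. If you want a self-contained replacement for the Vassiliev citation, it is cleaner to argue via the motivic identity $Z_{\A^1}(t)^{-1}=1-\Lb t$ (only two terms) combined with the first part of the proposition, or to quote Lehrer's computation of the sign character in the Orlik--Solomon algebra directly rather than rederiving it.
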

\begin{proof}
    Let $S_{\lambda}$ be the local system $(\pi_{\lambda})_* \QQ$. Then by \cite{DH}  $\sum_{|\lambda| = n} (-1)^{||\lambda||} S_ {\lambda} = \pm \QQ$ as a virtual representation. Thus, $\sum_{|\lambda|= n} (-1)^{||\lambda||}H^*_c(Z, S_{\lambda}) = H^*_c(Z , \pm \QQ)$. However $H^*_c(Z, S_{\lambda}) = H^*_c(\pi_{\lambda}^{-1}(Z), \QQ)$. This completes the proof.
    The equality $H^*_c(w_{k}(\PP^1), \pm \QQ) =H^*(w_{k}(\A^1), \pm \QQ) =0,$ is established in Lemma 2 of \cite{V}.
\end{proof}

We define $$w_{\lambda}^l(X) = \{Z \in w_{\lambda}(X) | h^0(X, I^2_Z(d)) + h^0(N(Z), \OO(d)) = l + h^0(X, \OO(d)) \}.$$

We note that the $w_{\lambda}^l(X)$ are constructible, this follows immediately from the fact that $\{Z \in w_{\lambda}(X)| h^0(X, I^2_Z(d)) + h^0(N(Z), \OO(d)) \ge l + h^0(X, \OO(d))\}$ is closed, which is a consequence of semicontinuity of cohomology.

\begin{prop}
   Let $X$ be a smooth projective variety of dimension $n$. Let $\lambda$ be an ordered partition. Then,
   $$[W_{\ge \lambda}(X)] = H^0(X, \LL)\sum_{l= 0}^{\infty} [w_{\lambda}^l(X)] \Lb^{l-|\lambda|(n +1)}.$$

\end{prop}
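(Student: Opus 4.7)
The plan is to analyze the forgetful projection $\pi: W_{\ge \lambda}(X) \to w_\lambda(X)$. By definition, the fiber of $\pi$ over a point $Z \in w_\lambda(X)$ is the linear subspace of sections $f \in H^0(X,\LL)$ with $Z \subseteq \Sing(f)$, which is precisely $H^0(X, I^2_Z \otimes \LL)$, since $Z \subseteq \Sing(f)$ is equivalent to $f$ vanishing to order at least two at every point of $\supp(Z)$. I will then stratify $w_\lambda(X) = \bigsqcup_l w_\lambda^l(X)$ and analyze $\pi$ over each stratum.

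First I compute the fiber dimension on a stratum. On $w_\lambda^l(X)$ the quantity
$$h^0(X, I^2_Z \otimes \LL) + h^0(N(Z), \LL) - h^0(X, \LL) = l$$
is constant by definition. The essential computation is that $h^0(N(Z), \LL)$ is in fact constant on all of $w_\lambda(X)$: since $Z$ is reduced of size $|\lambda|$ and $X$ is smooth of dimension $n$, the first infinitesimal neighborhood $N_X(Z)$ is a disjoint union of copies of $\Spec \OO_{X,P}/\mathfrak{m}_P^2$, each of length $n+1$, and any line bundle restricted to a zero-dimensional scheme $W$ has $h^0 = \operatorname{length}(W)$. Hence $h^0(N(Z), \LL) = (n+1)|\lambda|$, and therefore on $w_\lambda^l(X)$ the fiber of $\pi$ has constant dimension $h^0(X, \LL) + l - (n+1)|\lambda|$.

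Next I address the motivic class of $\pi^{-1}(w_\lambda^l(X))$. Let $\mathcal{I}$ be the universal ideal sheaf on $X \times w_\lambda(X)$ and let $p_1, p_2$ be the projections; then $W_{\ge \lambda}(X)$ is the total space (inside the trivial bundle $H^0(X,\LL) \times w_\lambda(X)$) of the family of subspaces cut out by $p_{2*}(\mathcal{I}^2 \otimes p_1^* \LL)$. Over the stratum $w_\lambda^l(X)$ the fiber dimension is constant by the previous paragraph; on this reduced constructible set, constancy of rank forces this coherent sheaf to be locally free, so $\pi$ restricts to a Zariski-locally trivial vector bundle of rank $h^0(X,\LL) + l - (n+1)|\lambda|$. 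Hence
$$[\pi^{-1}(w_\lambda^l(X))] = [w_\lambda^l(X)] \cdot \Lb^{h^0(X,\LL) + l - (n+1)|\lambda|}$$
in $\M_\Lb$. Summing over $l$ and factoring out $\Lb^{h^0(X,\LL)} = H^0(X,\LL)$ yields the claimed identity. The main subtle point is justifying the Zariski-local triviality on each stratum, but this is a standard consequence of constancy of fiber dimension for the family of linear subspaces; the dimension count of $N(Z)$ is the essential input, and uses smoothness of $X$ and reducedness of $Z$ (both built into $w_\lambda(X)$).
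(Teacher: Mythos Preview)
Your proposal is correct and follows essentially the same approach as the paper: stratify $w_\lambda(X)$ by the $w_\lambda^l(X)$, observe that $\pi$ is a vector bundle of constant rank $h^0(X,\LL)+l-(n+1)|\lambda|$ over each stratum, and sum. The paper simply cites Grauert's theorem for the vector-bundle step, whereas you spell out the computation $h^0(N(Z),\LL)=(n+1)|\lambda|$ and the local-freeness argument; these are the same underlying idea with slightly more detail supplied on your end.
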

\begin{proof}
    Let $\pi: W_{\lambda}(X) \to w_{\lambda}(X)$ be the projection map. We note that $w_{\lambda}(X)$ is the disjoint union of $w_{\lambda}^l(X)$. Therefore $W_{\lambda}(X)$  is the disjoint union of $\pi^{-1}(w_{\lambda}^l(X)),$ and hence $[W_{\lambda}(X)]= \sum_{l=0}^{\infty} [\pi^{-1}(w_{\lambda}^l(X))].$

    The map $\pi: \pi^{-1}(w_{\lambda}^l(X)) \to w_{\lambda}^l(X) $ is a vector bundle of dimension $h^0(X, \LL) - |\lambda|(n+1) + l$    by Grauert's theorem (see for instance p. 288 Cor. 12.9 \cite{H} ). Hence $$[\pi^{-1}(w_{\lambda}^l(X))] = H^0(X, \LL)[w_{\lambda}^l(X) ]  \Lb^{l-|\lambda|(n +1)}.$$ 
\end{proof}
\begin{prop}\label{h1bound}
Let $X$ be a smooth projective variety of dimension $n$. Let $C \subseteq X$ be a reduced curve of degree $e$. Let $Z \subseteq C$ be a reduced set of points. Let $d \gg \deg C$. Then $$h^1(X, I^2_Z(d)) \le \max(2 |Z | -ed , 0) + (n-1)\max( |Z| -ed, 0) - k.$$
\end{prop}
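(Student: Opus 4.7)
The plan is to bound $h^1(X, I_Z^2(d))$ by decomposing the first-order neighbourhood $N(Z)$ along the tangential and normal directions to $C$. Since $d \gg e$, Serre vanishing gives $H^1(X, \OO(d)) = H^1(X, I_C(d)) = H^1(X, I_C^2(d)) = 0$, so the long exact sequence arising from $0 \to I_Z^2(d) \to \OO(d) \to \OO(d)|_{N(Z)} \to 0$ identifies $h^1(X, I_Z^2(d))$ with the cokernel of the restriction map $H^0(X, \OO(d)) \to H^0(N(Z), \OO(d))$.

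Because $Z \subseteq C$ and $Z$ avoids $\Sing(C)$ (since $d \gg e$ forces $|Z|$ to exceed the bounded singular locus of any degree-$e$ curve), we obtain a chain $Z \subseteq N_C(Z) \subseteq N(Z)$, where $N_C(Z)$ is the first-order neighbourhood of $Z$ inside $C$, cut out by $I_C + I_Z^2 \subseteq \OO_X$. The quotient $\OO_{N(Z)}/\OO_{N_C(Z)}$ is canonically the conormal sheaf $N^*_{C/X}|_Z$, a rank $n-1$ vector bundle on $Z$. Applying the snake lemma to the map from $H^0(X, \OO(d))$ and the short exact sequence $0 \to N^*_{C/X}|_Z(d) \to \OO(d)|_{N(Z)} \to \OO(d)|_{N_C(Z)} \to 0$, I would derive
\[
h^1(X, I_Z^2(d)) \le h^1_{\mathrm{tan}} + h^1_{\mathrm{nor}},
\]
where $h^1_{\mathrm{tan}} := \dim \mathrm{coker}(H^0(X, \OO(d)) \to H^0(N_C(Z), \OO(d)))$ and $h^1_{\mathrm{nor}} := \dim \mathrm{coker}(H^0(X, (I_C + I_Z^2)(d)) \to H^0(Z, N^*_{C/X}|_Z(d)))$.

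For the tangential term, $H^1(X, I_C(d)) = 0$ identifies $h^1_{\mathrm{tan}}$ with $h^1(C, I_{Z,C}^2(d))$; pulling back to the normalization $\pi : \tilde C \to C$, whose arithmetic genus is bounded in $e, n$ by Castelnuovo, Serre duality applied to the resulting line bundle of degree $ed - 2|Z|$ yields $h^1_{\mathrm{tan}} \le \max(2|Z| - ed, 0) + O_{e,n}(1)$. For the normal term, shrinking the source subspace to $H^0(X, I_C(d))$ only enlarges the cokernel, which then becomes $H^1(X, I_Z I_C (d))$; using $I_C^2 \subseteq I_Z I_C$ and $H^1(X, I_C^2(d)) = 0$, this embeds into $H^1(C, I_{Z,C} \otimes N^*_{C/X}(d))$, a rank $n-1$ sheaf on $C$. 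A Harder--Narasimhan filtration of $\pi^* N^*_{C/X}$ on $\tilde C$, whose slopes are bounded in $e, n$ via adjunction, reduces the estimate to $n-1$ line bundle summands of degree approximately $ed - |Z|$, and Serre duality on each gives $h^1_{\mathrm{nor}} \le (n-1)\max(|Z| - ed, 0) + O_{e,n}(1)$. Summation yields the stated bound.

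The principal technical obstacle is the vector-bundle Riemann--Roch in the normal step: the Harder--Narasimhan slopes of $N^*_{C/X}$ must be controlled uniformly in $e, n$ (via adjunction, componentwise on $\tilde C$, together with the Castelnuovo bound), after which line-bundle Riemann--Roch applies summand-wise. The reducible and singular cases force the entire argument to be carried out componentwise on $\tilde C$, with conductor contributions at the nodes absorbed into the $O_{e,n}(1)$ error; this bounded correction is presumably what the constant $k$ in the displayed inequality encodes (a correction tracking the genus, the number of components, or an analogous Castelnuovo-type invariant).
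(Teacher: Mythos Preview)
The paper's own proof is a one-line citation to Corollary~1.1.25 of Lazarsfeld's \emph{Positivity in Algebraic Geometry I}. Your route---splitting the first-order neighbourhood $N(Z)$ into its tangential piece along $C$ and its normal piece $N^*_{C/X}|_Z$, then bounding each cokernel separately via Riemann--Roch on the normalization---is a genuinely different and more explicit argument that makes the shape of the bound (one ``$2|Z|-ed$'' term from doubling along $C$, and $n-1$ copies of ``$|Z|-ed$'' from the normal directions) geometrically transparent. Your reading of the trailing ``$-k$'' as an $O_{e,n}(1)$ correction is surely what is intended.

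Two points deserve tightening. The parenthetical ``$d\gg e$ forces $|Z|$ to exceed the bounded singular locus'' is simply false: there is no lower bound on $|Z|$ in the hypotheses, and $Z$ may well meet $\Sing(C)$. You already note later that conductor contributions on $\tilde C$ are bounded in $e,n$; just work on $\tilde C$ from the outset and absorb the singular contributions there, rather than claiming $Z\cap\Sing(C)=\varnothing$. Second, for the normal term on a general smooth projective $X$ (as opposed to $\PP^n$), adjunction alone only controls $\deg\det N^*_{C/X}$, not the individual Harder--Narasimhan slopes you invoke. The missing input is an upper bound on the maximal slope of $N^*_{C/X}$, which follows from the conormal inclusion $N^*_{C/X}\hookrightarrow\Omega^1_X|_C$: any destabilising line subbundle of $N^*_{C/X}$ sits in $\Omega^1_X|_C$, whose line-subbundle degrees are bounded in terms of $e$ and invariants of $(X,\LL)$. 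The resulting constant therefore depends on $X$ and not only on $(e,n)$, but this is consistent with how the proposition is used downstream (the constant is absorbed into $K$ in Proposition~\ref{boundformainrate}).
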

\begin{proof}
    This follows immediately from Corollary 1.1.25 in \cite{La}.
\end{proof}
 Let $\psi_{e,d}(l)$ denote the minimum $m$ such that $l \ge  \max(2 m -ed , 0) + (n-1)\max( m -ed, 0) - k.$
\begin{prop}\label{boundformainrate}
    Let $X$ be a smooth projective variety of dimension $n \ge 2$. Let $l \ge 1$.   Let $d \gg e >0$.
    Let $k$ be as in Proposition \ref{h1bound}.

    Let $\lambda$ be an ordered partition such that $e \ge |\lambda| /d$. Then $$\dim w_{\lambda}^l(X) \le \max_{e> 0}(|\lambda|(n) - (n-1)\psi_{e,d}(l) ) +K$$ where $K = \max_{r \le e} \dim Chow_r(X)$ where $Chow_r(X)$  is the Chow variety of reduced curves of degree $r$ in $X$. Let $r_0$ denote the minimal degree of any curve in $X$.

    Let $(nr_0+1)d> N > r_0n d $ Then, there exists $C$ such that $$\dim W_{\lambda, \ge N }^l(X) \le h^0(X, \OO(d))  - r_0nd  +K .$$
\end{prop}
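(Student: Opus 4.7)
The plan is to derive both estimates by coupling Lemma~\ref{contcurve} with Proposition~\ref{h1bound} via a dimension count over a suitable stratification. From the short exact sequence $0 \to I^2_Z(d) \to \OO_X(d) \to \OO_{N(Z)}(d) \to 0$, the condition $Z \in w_\lambda^l(X)$ is equivalent to $h^1(X, I^2_Z(d)) = l$. For $l \ge 1$ and $|\lambda|$ small compared to $ed$, Lemma~\ref{contcurve} produces a unique minimal-degree curve $C_Z \subseteq X$ of degree at most $e$, together with a subset $Z' \subseteq Z$ lying on $C_Z$ such that $h^1(X, I^2_{Z'}(d)) = l$.

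For the first inequality I would stratify $w_\lambda^l(X)$ by the pair $(e', m) = (\deg C_Z, |Z'|)$. On the $(e', m)$-stratum the choice of $C_Z$ costs at most $\dim \mathrm{Chow}_{e'}(X) \le K$ parameters, the $m$ points $Z' \subseteq C_Z$ on the one-dimensional curve cost at most $m$ parameters, and the remaining $|\lambda|-m$ points of $Z \setminus Z'$ in $X$ cost at most $n(|\lambda|-m)$ parameters. Proposition~\ref{h1bound} applied to $Z'$ on $C_Z$ forces $m \ge \psi_{e',d}(l)$, so the stratum has dimension at most $K + n|\lambda| - (n-1)m \le K + n|\lambda| - (n-1)\psi_{e',d}(l)$. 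Taking the maximum over $e' \le e$ gives the first claim.

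For the second inequality I would enlarge the partition so as to absorb the extra singularities forced by $|\mathrm{Sing}(f)| \ge N$. Given $(f, Z) \in W_{\lambda, \ge N}^l(X)$, by picking an ordered structure on a length-$(\ge N - |\lambda|)$ portion of $\mathrm{Sing}(f) \setminus Z$, and, in the case $\dim \mathrm{Sing}(f) \ge 1$, using that each curve component of $\mathrm{Sing}(f)$ has degree at least $r_0$, one realizes $W_{\lambda, \ge N}^l(X)$ as contained, up to a generically finite forgetful map, in the preimage of $w_{\lambda'}^l$ inside $W_{\ge \lambda'}(X)$ for some partition $\lambda'$ with $|\lambda'| \ge N$. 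The fiber of $W_{\ge \lambda'} \to w_{\lambda'}$ over $Z'' \in w_{\lambda'}^l$ is $H^0(X, I^2_{Z''}(d))$, of dimension $h^0(X, \OO(d)) - |\lambda'|(n+1) + l$. Combining this with the first bound applied to $\lambda'$ yields an upper bound of
\[
K + h^0(X, \OO(d)) - |\lambda'| - (n-1)\psi_{e',d}(l) + l,
\]
and the conclusion follows from $|\lambda'| \ge N > r_0 nd$ together with the inequality $(n-1)\psi_{e',d}(l) \ge l$, which is forced by the explicit form of $\psi$ in Proposition~\ref{h1bound} since in the relevant range $\psi_{e',d}(l) \ge (l + ne'd + k)/(n+1)$.

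I expect the main technical obstacle to lie in the positive-dimensional singular locus case for the second bound: when $\mathrm{Sing}(f)$ contains a curve, one must choose the extension $\lambda'$ carefully so that the forgetful map to $W_{\lambda, \ge N}^l$ remains generically finite, and so that the $r_0 nd$ saving is realized by the fiber codimension $|\lambda'|(n+1)$ rather than being lost to overcounting. The narrow range $r_0 nd < N < (nr_0 + 1)d$ appears to be present precisely to prevent one from having to absorb more than a single minimal-degree curve's worth of additional singularities at once.
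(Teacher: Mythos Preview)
Your argument for the first inequality is exactly the paper's: invoke Lemma~\ref{contcurve} to produce the curve $C$ and the subset $Z' = Z\cap C$ with $h^1(X,I^2_{Z'}(d))=l$, apply Proposition~\ref{h1bound} to force $|Z'|\ge \psi_{e',d}(l)$, and count dimensions as $K + m + n(|\lambda|-m) \le K + n|\lambda| - (n-1)\psi_{e',d}(l)$. The paper's proof is in fact just this paragraph; it does not write out the second inequality at all.

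Your treatment of the second inequality therefore goes beyond what the paper supplies, and the enlargement-of-partition idea is the natural one. Two small corrections are worth making. First, when you enlarge $Z$ to $Z''$ with $|Z''|\ge N$, the new configuration need not lie in $w_{\lambda'}^l$: adding points can only raise $h^1$, so $Z''\in w_{\lambda'}^{l'}$ for some $l'\ge l$. This does not break the argument, since the combined bound $K + h^0(X,\OO(d)) - |\lambda'| - (n-1)\psi_{e',d}(l') + l'$ is controlled uniformly in $l'$ by exactly the inequality $(n-1)\psi_{e',d}(l')\ge l'$ you isolate. Second, the phrase ``generically finite forgetful map'' is attached to the wrong arrow. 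What you need is that the correspondence $\{(f,Z,Z'')\}$ surjects onto $W^l_{\lambda,\ge N}(X)$ via $(f,Z,Z'')\mapsto (f,Z)$, while the other projection $(f,Z,Z'')\mapsto (f,Z'')$ into $W_{\ge\lambda'}(X)$ has finite fibers (because $Z\subseteq Z''$ is a finite choice). That pair of facts gives $\dim W^l_{\lambda,\ge N}(X)\le \dim\{(f,Z,Z'')\}\le \dim(\text{image in }W_{\ge\lambda'})$, which is what you then bound. With these adjustments your completion of the second half is sound.
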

\begin{proof}
    By Lemma \ref{contcurve} any $Z \in w_{\lambda}^l(X)$ must be of the form $Z' \cup Z''$ where $Z'= Z \cap C$ for some curve $C$ of degree $e'\le e$. Furthermore $l =h^1( X , I^2_Z(d)) = h^1(X, I^2_{Z'}(d))$. By Proposition \ref{h1bound}, this implies that $|Z'| \ge \psi_{e,d}(l) $.  
    
As a result, $$\dim w_{\lambda}^l(X) \le \max_{|\lambda|/d > e> 0}(|\lambda|(n) - (n-1)\psi_{e,d}(l) ) +\max_{|\lambda|/d > e> 0} \dim Chow(e).$$
    
\end{proof}

We now prove Theorem \ref{mainrate}, The essential idea is as follows- we use the techniques of \cite{VW} to argue that the difference between $\frac{U(X,\LL)}{H^0(X, \LL)} - \zeta^{-1}(\dim X +1)$ is  a certain sum of contributions corresponding to configurations of points contained in $w_{\lambda}^l(X)$ where $l\ge 1$. We then use Proposition \ref{boundformainrate} to bound this contribution.
\begin{proof}[Proof of Theorem \ref{mainrate}]
    By Proposition \ref{propVW} $$U(X, \LL) = W_{\varnothing}(X) =  \sum (-1)^{||\lambda||}(W_{\ge \lambda}(X) - W_{\lambda, \ge N +1 }).$$ By Proposition \ref{boundformainrate}  the terms $W_{\lambda, \ge N  +1 }) = 0$ up to dimension $-  m(d)(\dim X) + C$. Also for $ ||\lambda|| \le N , l \ge 1$, $w_{\lambda}^l$ lies in $F$. Hence we may apply Proposition and conclude that $W_{\ge \lambda} = w_{\lambda}\Lb^{-|\lambda|(n+1)} $ upto dimension  $-  m(d)(\dim X) + C$.

    This implies that $W_{\varnothing}(X) = \sum_{\lambda} (-1)^{||\lambda||} w_{\lambda}(X)\Lb$, up to  dimension $-  m(d)(\dim X) + C$.. But by Proposition 3.7 of \cite{VW} the right hand side of this last equation equal $Z_X^{-1}(\dim X +1)$.

\end{proof}
\section{Proof of Theorem \ref{maindim}}
In this section we will prove Theorem \ref{maindim}. The proof of Theorem \ref{maindim} is similar to that of Theorem \ref{mainrate} in that we use Proposition \ref{propVW}  and provide bounds on the dimensions of $w^l_\lambda(X)$. However our assumptions are different. This section is strictly speaking independent of Sections 2 and 3 , and we do not use any results on Cayley-Bacharach sets.

We define the projective hull of a subscheme $Y \subseteq \PP^n$, denoted $phull(Y)$ to be the intersection of all projective subspaces of $\PP^n$ containing $Y$.

\begin{prop}\label{affPn}
   Let $n, m \ge 0$. Let $Z \subseteq \PP^n$ be a reduced zero dimensional subscheme. SUppose $|Z| \ge m+2$. Suppose that $phull(Z)$ in $\PP^n$ is $m$ dimensional.  Let $d \ge 3$. Then the kernel of the restriction map $H^0(\PP^n, \OO(d)) \to H^0(N(Z),\OO(d)) $ is of codimension $\ge (m +1)(n+1) + n- 1$. 
\end{prop}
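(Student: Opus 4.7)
The plan is to reduce to an $(m+2)$-point subset $W \subseteq Z$ and lower-bound the image of the restriction map by splitting the target into tangent and normal directions along $phull(Z)$. Since $Z$ spans $\PP^m = phull(Z)$ and $|Z| \ge m+2$, pick $m+1$ points of $Z$ spanning $\PP^m$ together with one additional point $p \in Z$; let $W$ denote this size-$(m+2)$ subset. Because $N(W) \subseteq N(Z)$, the codimension bound for $W$ implies the one for $Z$. After a linear change of coordinates, take the spanning points to be $e_0, \dots, e_m$ and $\PP^m = \{x_{m+1} = \cdots = x_n = 0\}$. At each $w \in W$ the $1$-jet in $\PP^n$ splits into a tangent part inside $\PP^m$ (dimension $m+1$) and a normal part spanned by $\partial_{m+1}, \dots, \partial_n$ (dimension $n-m$), giving $H^0(N(W), \OO(d)) = T \oplus N$. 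Writing $\Phi^T, \Phi^N$ for the corresponding projections of the restriction map $\Phi$, the claim will follow from (A) $\Phi^N$ is surjective and (B) $\Phi^T(\ker \Phi^N)$ has dimension at least $(m+1)(m+2) - 2$, since then
\[
\dim \operatorname{Im} \Phi \;=\; \dim N + \dim \Phi^T(\ker \Phi^N) \;\ge\; (n-m)(m+2) + (m+1)(m+2) - 2 \;=\; (m+1)(n+1) + n - 1.
\]

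For (A), I would factor $\Phi^N$ as $f \mapsto (\partial_{m+1} f|_{\PP^m}, \dots, \partial_n f|_{\PP^m}) \in H^0(\PP^m, \OO(d-1))^{n-m}$, followed by evaluation at $W$. The first map is surjective via the ansatz $f = \sum_{j > m} x_j g_j$ with $g_j$ pulled back from $\PP^m$. The evaluation is surjective provided $W$ imposes independent conditions on $H^0(\PP^m, \OO(d-1))$, which for $d-1 \ge 2$ is easy: separate each $e_i$ by $\{x_i = 0\}$, multiplied if necessary by a hyperplane through $p$ avoiding $e_i$, and separate $p$ by a quadric $x_i x_j$ with $p_i, p_j \ne 0$ (such indices exist because $p \ne e_k$ for any $k$).

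For (B), any $f_0 \in H^0(\PP^m, \OO(d))$ lifts to an element of $\ker \Phi^N$ by viewing $f_0$ as a polynomial on $\PP^n$ independent of $x_{m+1}, \dots, x_n$, so it suffices to lower-bound the image of $H^0(\PP^m, \OO(d)) \to H^0(N_{\PP^m}(W), \OO(d))$. The points $e_0, \dots, e_m$ alone contribute $(m+1)^2$ independent conditions for $d \ge 3$ (the monomials $\{x_i^d, x_i^{d-1} x_j\}_{i \ne j}$ in $x_0, \dots, x_m$ form a dual basis to their $1$-jets), so $p$ must contribute at least $m - 1$ further. After reindexing so $p_0, p_1 \ne 0$ and normalizing $p_0 = 1$, for each $i \in \{2, \dots, m\}$ define
\[
h_i \;=\; \begin{cases} x_0 x_1 x_i^{d-2} & \text{if } p_i \ne 0, \\ x_0 x_i x_1^{d-2} & \text{if } p_i = 0. \end{cases}
\]
All exponents are at most $d-2$ (using $d \ge 3$), so each $h_i$ vanishes to order at least $2$ at every $e_j$; a direct computation gives $\partial_i h_i(p) \ne 0$ and $\partial_j h_i(p) = 0$ for $j \in \{2, \dots, m\}\setminus\{i\}$, so the $1$-jets of $h_2, \dots, h_m$ at $p$ are linearly independent. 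The main obstacle is this construction, which must produce polynomials doubly vanishing at every $e_j$ with controllable $1$-jet data at $p$; the case split above is what makes the family robust to every possible support pattern for the coordinates of $p$, after which the numerical combination displayed in the first paragraph closes out the bound.
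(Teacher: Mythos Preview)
Your argument is correct, and it takes a genuinely different route from the paper's proof. Both begin by passing to a subset $W=\{e_0,\dots,e_m,p\}$ with $p\in\PP^m=phull(Z)$, but then diverge. The paper does \emph{not} keep $p$ general: it uses a one-parameter degeneration $p_t=[x_0:x_1:tx_2:\dots:tx_m:0:\dots:0]$ together with semicontinuity of $h^0(\PP^n,I^2_{Z_t}(d))$ to reduce to $p=[1:x:0:\dots:0]$ lying on the line through $e_0,e_1$. In that special position the count is immediate: the $(m+1)(n+1)$ monomial coefficients of $X_i^d$ and $X_jX_i^{d-1}$ handle the $e_i$, and for $k=2,\dots,n$ the single relation $\sum_r x^r a_{k,r}=0$ (with $a_{k,r}$ the coefficient of $X_kX_1^rX_0^{d-1-r}$) is visibly independent of the previous ones once $d\ge 3$, giving $n-1$ further conditions.

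Your approach instead keeps $p$ arbitrary and splits the target along $\PP^m$: the normal block $N$ is hit surjectively via the ansatz $f=\sum_{j>m}x_jg_j$ and the elementary fact that $m+2$ points (with $m+1$ spanning) impose independent conditions on $\OO_{\PP^m}(d-1)$ for $d\ge 3$; the tangent block is handled by the explicit family $h_i$, whose case split on $p_i$ is what replaces the paper's degeneration. The trade-off is clear: the paper's semicontinuity trick makes the final computation a one-line monomial check and directly yields the auxiliary statement recorded there as a separate proposition (linear independence of the specified jet conditions, later reused for subvarieties $X\subseteq\PP^n$), whereas your decomposition is more structural, avoids any limiting argument, and makes transparent that the extra $n-1$ conditions split as $(n-m)$ normal plus $(m-1)$ tangent contributions. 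Your wording in (A) (``separate each $e_i$ by $\{x_i=0\}$'') is slightly garbled, but the intended construction is standard and correct.
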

\begin{proof}

    We claim that it suffices to prove the Proposition in the special case when $Z = \{P_0, \dots, P_m, P\}$ where the projective hull of the $P_i$ is $m$ dimensional, and $P$ is in $phull(\{P_0 , \dots, P_m\}).$ To see this, first note that if $Z' \subseteq Z$ the  kernel of $H^0(\PP^n, \OO(d)) \to H^0(N(Z),\OO(d)) $  is contained in the kernel of $H^0(\PP^n ,\OO(d)) \to H^0(N(Z'), \OO(d)).$ As a result, to establish the Proposition for $Z$, it suffices to establish it for any subset $Z' \subseteq Z$. We then note that given any $Z$ whose projective hull is $m$ dimensional, we can find a $Z_{ind} \subseteq Z$ such that $|Z_{ind}| = m+1$ and $phull(Z_{ind}) = phull(Z)$. We now consider any point $P  \in  Z \setminus Z_{ind}$ and note that $Z' = Z_{ind} \cup \{P\}$ is a subset of $Z$ satisfying the conditions of the claim and that to prove the Proposition for $Z$ it suffices to prove it for $Z'$.

    In light of the above paragraph, we now assume $Z = \{P_0, \dots, P_m, P\},$ where $phull(P_0, \dots, P_m)$ is $m$ dimensional and contains $P$. We may further assume that $P_i = [0: 0 \dots: 0: 1: 0 \dots 0],$ here the $1$ is in the $i+1^{st}$ position and $P = [x_0: \dots x_m : 0 \dots : 0].$ This is because there is an element  $g \in PGL_{n+1}(\FF)$ satisfying $g(P_i) =  [0: 0 \dots: 0: 1: 0 \dots 0].$

    We now claim that it suffices to prove the proposition in the case where $P = [1 :x : 0 \dots, : 0]$. to see this, note that we may assume without loss of generality that if $P = [x_0: x_1: \dots, x_m: 0 \dots : 0],$ $x_0 \neq 0$ and $x_1 \neq 0$. For $t \in \A^1$, let $Z_t$ be the reduced zero dimensional subscheme $\{P_1, \dots P_m. P_t\}$, where $P_t = [x_0: x_1: t x_2 ,\dots tx_m: 0 \dots 0].$ The $Z_t$ form a family of subschemes over $\A^1,$ with the original subscheme $Z$ being equal to $Z_1$. Furthermore, given $t \neq 0$, there is a $g \in PGL_{n+1}(\FF)$ such that $g(Z_t) = Z_1$. This has the consequence that the dimension of the kernel of the restriction of $H^0(\PP^n,\OO(d)) \to H^0(N(Z_t) , \OO(d))$ is independent of $t$ as long as $t \neq 0$. The kernel  of  $H^0(\PP^n,\OO(d)) \to H^0(N(Z_t) , \OO(d))$  is isomorphic to $H^0(\PP^n , I^2_{Z_t}(d)).$ Semicontinuity now implies that $h^0(\PP^n , I^2_{Z_0}(d)) \ge h^0(\PP^n , I^2_{Z_1}(d)).$ As a result it suffices to prove the proposition for $Z_0$. However $Z_0$ is of the required form.

    To complete the proof, we directly compute  the bound on the codimension of the kernel when $Z$ is of the form $\{P_0, \dots, P_m, P\}$ where $P_i = [0: 0 \dots: 0: 1: 0 \dots 0],$  and $P = [1: x:0 \dots: 0]$. Let $f$ be a section of $H^0(\PP^n ,\OO(d))$, we may think of $f$ as a homogenous polynomial. Then  requiring $f$ to vanish to second order at $P_i$ implies that the coefficients of $X_i^d$ and $X_jX_i^{d-1}$ are zero. Let $a_{k,r}$ be the coefficient to $X_kX_1^r X_0 {d - 1 -r}$. Requiring that the partial derivatives of  $f$ vanish at $P$ is equivalent to requiring that for $k \in \{2, \dots n\}$, $$
    \sum_{r=0}^{d} x^ra_{k,r} = 0.$$
    We note that the above linear relations are linearly independent and hence the kernel of $H^0(\PP^, \OO(d)) \to H^0(N(Z), \OO(d))$ is of codimension at least $(m+1)(n+1) + n-1$.

\end{proof}

The above proof gives us a little more information than the statement of Proposition \ref{affPn}. We record this information in the following Proposition.

\begin{prop}\label{linindPm}
    Let $m,n \ge 1$. Let $d \ge 3$
   Let $P_i  = [0: \dots 1: : 0 \dots : 0] \in \PP^n$, where the $1$ is in the $i+1^{th}$ position. Let $P \in phull\{P_0 ,\dots, P_m\}$ Let $V_i^j$ be any basis of $T_{P_i}(\PP^n)$. Let $V^j$ be a basis of $T_P \PP^n$.

Then requiring a polynomial $f \in H^0(\PP^n,\OO(d))$ to satisfy $$f(P_i) =0, \frac{\partial}{\partial V_i^j}(f)(P_i) = 0, \frac{\partial}{\partial v^j}f(p) =0 ,$$ imposes $(m+1)(n+1) + n-1$ linearly independent conditions on $f$. 
\end{prop}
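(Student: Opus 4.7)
The plan is to deduce Proposition \ref{linindPm} rather directly from the proof of Proposition \ref{affPn}.

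First, I observe that the number of linearly independent conditions in the collection $\{f(P_i)=0,\ \partial_{V_i^j}f(P_i)=0,\ \partial_{V^j}f(P)=0\}$ depends only on the spans of the derivative functionals at each point, not on the choice of bases: changing $V_i^j$ or $V^j$ acts by an invertible linear transformation on the corresponding block of functionals and therefore preserves the total rank. Without loss of generality I may then use any convenient bases, in particular the standard coordinate bases $\partial/\partial y_j$ in the affine charts around $P_i$ and $P$.

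Next, by the same $PGL_{n+1}$-equivariance and semicontinuity argument used in the proof of Proposition \ref{affPn}---the diagonal rescalings $X_i \mapsto X_i/t$ for $i = 2, \ldots, m$ fix every $P_i$ pointwise while sliding $P$ along a one-parameter family whose $t = 0$ limit lies in $phull(P_0, P_1)$, and the dimension of the kernel of the restriction map is upper-semicontinuous in the family---it suffices to establish the codimension bound in the special case $P = [1 : x : 0 : \cdots : 0]$.

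In that special case, the proof of Proposition \ref{affPn} already exhibits $(m+1)(n+1) + n - 1$ linearly independent conditions: the $(m+1)(n+1)$ coefficient-vanishings coming from the conditions at $P_0, \ldots, P_m$, together with the $n - 1$ relations $\sum_{r=0}^{d} x^r a_{k,r} = 0$ arising from $\partial f/\partial y_k (P) = 0$ for $k = 2, \ldots, n$. Crucially, this subcollection never invokes the functional $f(P)$, so it sits entirely inside the list of conditions appearing in Proposition \ref{linindPm}. The rank of the collection in Proposition \ref{linindPm} is therefore at least $(m+1)(n+1) + n - 1$, as required.

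The only step that genuinely requires thought is the basis-independence reduction; once it is granted, the result is just a repackaging of the computation already carried out for Proposition \ref{affPn}, with $f(P) = 0$ dropped (it was never used to produce the lower bound in the first place).
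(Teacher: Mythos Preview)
Your proposal is correct and follows essentially the same route as the paper, which simply records that the argument is ``already present in the proof of Proposition \ref{affPn}.'' You have just made explicit the two ingredients implicit in that remark --- basis-independence (so one may use coordinate partials) and the $PGL_{n+1}$/semicontinuity reduction to $P=[1:x:0:\cdots:0]$ --- and then correctly observed that the explicit linear relations written down in the proof of Proposition \ref{affPn} never use the functional $f(P)$, so they already sit inside the list of conditions appearing here.
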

\begin{proof}
    We omit the proof as this is already present in the proof of Proposition \ref{affPn}.
\end{proof}

\begin{prop} \label{affX}
   Let $n ,m\ge 0$. Let $Z \subseteq X \subseteq \PP^n$ be a reduced set of points, $m+1 <|Z| < \dim X $.  Let $d\ge 3$. Let $\dim phull(Z) = m$. Then the kernel of the restriction map $H^0(X, \OO(d)) \to H^0(N(Z),\OO(d))$ is of codimension $\ge (m +1)(\dim X +1) + \dim X -1$.
\end{prop}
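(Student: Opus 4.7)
My plan is to adapt the proof of Proposition \ref{affPn} to the setting of $X$. First, as in that proof, I would reduce to the case $Z = \{P_0, \ldots, P_m, P\}$ where $P_0, \ldots, P_m$ span $phull(Z)$ and $P$ is an additional point of $Z$ in $phull(Z)$; this uses the monotonicity $H^0(X, I_Z^2 \otimes \OO(d)) \subseteq H^0(X, I_{Z'}^2 \otimes \OO(d))$ for $Z' \subseteq Z$, so a codimension bound for $Z'$ implies one for $Z$. By acting on $\PP^n$ by an element of $PGL_{n+1}(\FF)$ (which moves $X$ and $Z$ compatibly without changing the quantities involved), arrange that $P_i = e_i$ for $i = 0, \ldots, m$ and $P = [1 : x : 0 : \ldots : 0]$ for some $x \ne 0$; note that the hypothesis $|Z| > m+1$ forces $m \ge 1$.

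Next, I would factor the restriction map through $\PP^n$: since the image of the composite
$$H^0(\PP^n, \OO(d)) \to H^0(X, \OO(d)) \to H^0(N_X(Z), \OO(d))$$
is contained in the image of the second map, it suffices to bound the codimension in $H^0(\PP^n, \OO(d))$ of the kernel of the composite from below by $N := (m+1)(\dim X + 1) + \dim X - 1$. The kernel consists of degree-$d$ polynomials $F$ satisfying $F(P_i) = 0$ and $\partial_v F(P_i) = 0$ for $v \in T_{P_i} X$, as well as $F(P) = 0$ and $\partial_w F(P) = 0$ for $w \in T_P X$. I would exhibit $N$ linearly independent conditions among these: at each $P_i$, the $(\dim X + 1)$ conditions $F(P_i) = 0$ (vanishing of the coefficient of $X_i^d$) and $\partial_v F(P_i) = 0$ for $v$ in a basis of $T_{P_i} X$ ($\dim X$ linearly independent combinations of the coefficients of $X_j X_i^{d-1}$ for $j \ne i$); and at $P$, the $(\dim X - 1)$ conditions $\partial_w F(P) = 0$ for $w$ in a basis of a $(\dim X - 1)$-dimensional subspace $U \subseteq T_P X$ chosen so that $U \cap \FF \partial/\partial X_1 = 0$ (a complement of $\partial/\partial X_1$ in $T_P X$ if that direction lies in $T_P X$; otherwise any $(\dim X -1)$-dimensional subspace).

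The conditions at different $P_i$'s involve disjoint monomials, and within each $P_i$ the monomial $X_i^d$ is disjoint from $X_j X_i^{d-1}$, so these contribute $(m+1)(\dim X + 1)$ independent conditions. For the conditions at $P$, the computation of Proposition \ref{affPn} expresses $\partial_w F(P)$ as a linear combination of the coefficients of $X_0^{d-r} X_1^r$ (weighted by $r x^{r-1} w_1$) and of $X_0^{d-1-r} X_1^r X_k$ for $k \ge 2$ (weighted by $x^r w_k$, where $w_k$ is the $\partial/\partial X_k$ component of $w$). For $d \ge 3$, the coefficient $a_{k,1}$ of $X_0^{d-2} X_1 X_k$ (for $k \ge 2$) is not constrained by the conditions at any $P_i$, and it appears in $\partial_w F(P)$ with coefficient $x w_k$. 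The condition $U \cap \FF \partial/\partial X_1 = 0$ makes the projection $w \mapsto (w_2, \ldots, w_n)$ injective on $U$, so the $\dim X - 1$ conditions at $P$ are linearly independent among themselves and, by involving the free coefficients $a_{k,1}$, jointly independent of the $P_i$-conditions, yielding the required total of $N$.

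The main obstacle will be controlling the ``boundary'' coefficients $c_{d-1, 1}, c_{1, d-1}, a_{k, 0}, a_{k, d-1}$ that appear in $\partial_w F(P)$ and overlap with the (b)-constraints at $P_0$ and $P_1$: in the proof of Proposition \ref{affPn} the full second-order vanishing in $\PP^n$ sets these coefficients individually to zero, while here they only satisfy $\dim X$ linear relations, leaving genuine freedom in the boundary directions. The key observation that bypasses any delicate analysis of the boundary is that the middle coefficients $a_{k, 1}$ are wholly outside the span of (a) and (b); the transversality of $U$ to $\partial/\partial X_1$ is chosen precisely to guarantee that the contribution of these middle coefficients to the $P$-conditions is nonzero and of full rank $\dim X - 1$, mirroring the role of the ``$n - 1$ normal directions'' in Proposition \ref{affPn}.
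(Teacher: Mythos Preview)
Your reduction step has a genuine gap. You claim that by a single element of $PGL_{n+1}(\FF)$ you can arrange $P_i = e_i$ \emph{and} $P = [1:x:0:\ldots:0]$. For $m \geq 2$ this is impossible: the target point $[1:x:0:\ldots:0]$ lies on the line $\overline{e_0e_1}$, while a generic $P \in phull\{P_0,\ldots,P_m\} = \PP^m$ does not lie on any line $\overline{P_iP_j}$, and $PGL_{n+1}$ preserves such incidences. The degeneration-plus-semicontinuity trick from the proof of Proposition~\ref{affPn} that handles this in the $\PP^n$ case does not transplant here: as $P_t \to [1:x:0:\ldots:0]$, the variety $X$ does not follow along, so there is no canonical limit of the subspace $T_{P_t}X \subseteq T_{P_t}\PP^n$ to which you could apply semicontinuity. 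Your explicit ``middle coefficient $a_{k,1}$'' argument, while correct when $P$ really does lie on $\overline{e_0e_1}$, depends essentially on that special position and does not obviously generalise to arbitrary $P \in \PP^m$.

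The paper avoids this entirely by \emph{not} specialising $P$. After reducing to $|Z|=m+2$ and $P_i = e_i$ (which \emph{is} achievable by $PGL_{n+1}$), it extends the chosen bases of $T_{P_i}X$ and $T_PX$ to full bases of $T_{P_i}\PP^n$ and $T_P\PP^n$. Proposition~\ref{linindPm} (valid for general $P \in \PP^m$, via the degeneration argument carried out once and for all in the $\PP^n$ setting where the full tangent spaces are canonical) says the resulting $(m+1)(n+1)+n$ linear functionals on $H^0(\PP^n,\OO(d))$ span a space of dimension at least $(m+1)(n+1)+n-1$, i.e.\ they satisfy at most one linear relation. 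Any subset of such a family also satisfies at most one relation, so the $(m+1)(\dim X+1)+\dim X$ functionals coming from the $X$-tangent directions span a space of dimension at least $(m+1)(\dim X+1)+\dim X-1$, which is exactly the bound. This ``at most one relation passes to subsets'' observation is the clean replacement for your hands-on computation and is what lets the argument go through for arbitrary $P$.
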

\begin{proof}
    This proof is similar to that of Proposition \ref{affPn} .
    It suffices to prove the proposition in the case where $Z = \{P_0, \dots, P_m, P\}$ where $P_i= [0: \dots : 0 :1:  \dots: 0]$ where there is a $1$ in the $i+1$th position. This is by an argument identical to that in Proposition \ref{affPn}.

    Let $V_i^j$ be a basis of $T_{P_i}X$. Let $v^j$ be a basis of $T_P X$.It suffices to prove that requiring a polynomial $f$ to satisfy at $f(P_i) =0, $ for all $i$, $f(P_i)=0$ and that $\partial_{v_i^j}(f)(P_i)$, $\partial_{v_i}(f)(P)$ impose linearly independent conditions on $f$. However, this follows from Proposition \ref{linindPm}, since in that proof we show that a larger set of linear equations impose linearly independent conditions on $f$.
\end{proof}

\begin{prop}\label{bounddimbigdim}
    Let $n \ge 0$. Let $X$ be a smooth projective variety of dimension $n$. Let $\lambda$ be an ordered partition, $|\lambda| \le \sqrt{n}/3 $. Let $l \ge 1$. Then $$\dim w_{\lambda}^l(X) + l - (n + 1) (|\lambda|) \le -\sqrt{n}/{3}$$ and $$\dim W_{\lambda, \ge N}^l(X) \le h^0(X, \OO(d)) - \sqrt{n}/{3}.$$
    In addition we have $\dim W_{\lambda, \ge N}^0(X) \le h^0(X, \OO(d)) - \sqrt{n}/{3}.$
\end{prop}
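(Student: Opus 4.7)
The plan is to combine Proposition \ref{affX} --- which controls the image of the first-order restriction $H^0(X,\OO(d)) \to H^0(N(Z),\OO(d))$ in terms of the projective hull of $Z$ --- with an incidence-style bound on configurations whose projective hull has small dimension. This produces the bound on $\dim w_\lambda^l(X)$; the remaining two inequalities then follow by fiber-bundle arguments, supplemented in the $l=0$ case by an additional map to configurations of $N$ singular points.

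For the first inequality, fix $Z \in w_\lambda^l(X)$ with $l \geq 1$ and let $m := \dim phull(Z)$. Unpacking the definition, the restriction image has codimension exactly $l$ in $H^0(N(Z),\OO(d))$, while Proposition \ref{affX} (when $m+1<|\lambda|$) gives image dimension at least $(m+1)(n+1) + n - 1$. Combining these yields
\begin{equation*}
l \le (n+1)(|\lambda| - m - 1) - (n-1),
\end{equation*}
so setting $k := \lceil (l+n-1)/(n+1) \rceil$ we have $m \le |\lambda| - 1 - k$; for $l \ge 1$ this gives $k \ge 1$ and in particular justifies the hypothesis $m+1<|\lambda|$. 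Next I bound the dimension of the locus of configurations whose projective hull has dimension at most $M$. Choosing ordered representatives, at most $M+1$ points can enlarge the hull (each contributing $n$ degrees of freedom in $X$), while every remaining point must lie in the determined $M$-plane intersected with $X$ and so contributes at most $M$ (since $M \le |\lambda| - 2 < n$ in our range). This gives the bound $(M+1) n + (|\lambda| - M - 1) M$. Specialising to $M = |\lambda| - 1 - k$ and using $l \le (n+1)k - (n-1)$, a short computation yields
\begin{equation*}
\dim w_\lambda^l(X) + l - (n+1)|\lambda| \le |\lambda|(k-1) - k^2 - n + 1.
\end{equation*}
Since $|\lambda|, k \le \sqrt{n}/3$, the right-hand side is at most $-\sqrt{n}/3$: equivalently, the quadratic $k^2 - (\sqrt{n}/3)k + (n-1)$ has negative discriminant $-35n/9 + 4$ for $n$ large and is therefore positive.

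The second inequality is immediate from the first. The forgetful map $W_{\ge \lambda}^l(X) := \{(f,Z) \in W_{\ge \lambda}(X) : Z \in w_\lambda^l(X)\} \to w_\lambda^l(X)$ has fibers $H^0(X, I^2_Z \otimes \LL)$ of dimension $h^0(X,\OO(d)) - (n+1)|\lambda| + l$ by Grauert's theorem, and $W_{\lambda,\ge N}^l(X) \subseteq W_{\ge \lambda}^l(X)$ trivially. Hence $\dim W_{\lambda,\ge N}^l(X) \le \dim w_\lambda^l(X) + h^0(X,\OO(d)) - (n+1)|\lambda| + l \le h^0(X,\OO(d)) - \sqrt{n}/3$.

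For the $l=0$ clause, the plain fiber-bundle bound gives only $\dim W_{\ge \lambda}^0(X) = h^0(X,\OO(d)) - |\lambda|$, which is too weak when $|\lambda| < \sqrt{n}/3$. Instead I exploit the constraint $|\Sing(f)|\ge N$: for any $(f,Z) \in W_{\lambda,\ge N}^0(X)$, choosing an $N$-element subset $Z' \subseteq \Sing(f)$ containing $Z$ defines a finite-to-one map into $W_{\ge [N]}(X)$. Stratifying the target by the codimension $l'$ of the restriction image at $Z'$, the $l'=0$ stratum has dimension at most $h^0(X,\OO(d)) - N$ by the fiber-bundle count, while the $l' \ge 1$ strata are dominated by the first inequality applied with $[N]$ replacing $\lambda$ (which requires $N \le \sqrt{n}/3$). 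The main obstacle is precisely that both bounds deliver $\le h^0(X,\OO(d)) - \sqrt{n}/3$ only when $N$ is of order $\sqrt{n}/3$; this is implicit in the statement and is the range used when the proposition is applied in the proof of Theorem \ref{maindim}.
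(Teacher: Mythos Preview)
Your proof is correct and follows essentially the same approach as the paper's: constrain $m=\dim phull(Z)$ via Proposition~\ref{affX}, combine with the incidence bound $(m+1)n+(|\lambda|-m-1)m$ on configurations with small projective hull, and conclude by an elementary estimate (you reparametrize by $k=\lceil(l+n-1)/(n+1)\rceil$ and check a discriminant, whereas the paper bounds $(|\lambda|-m-1)m\le|\lambda|^2\le n/9$ directly, but these are equivalent manipulations). Your treatment of the $l=0$ clause---passing through the correspondence with $W_{\ge[N]}(X)$ and noting this requires $N$ of order $\sqrt{n}/3$---is more explicit than the paper's one-word ``immediate,'' and is exactly the argument implicitly needed; one tiny remark is that your justification of the hypothesis $m+1<|\lambda|$ via $k\ge1$ is circular as written (you deduce it from the conclusion of Proposition~\ref{affX}), but this is easily patched by observing separately that $m=|\lambda|-1$ forces $l=0$.
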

\begin{proof}
Let $ l\ge 1$.
Let $Z \in w_{\lambda}^l(X)$. Then by definition, $h^0(X, I^2_Z(d)) = h^0(X, \OO(d)) - |Z|(\dim X +1) +l$. Let $\dim phull(Z) = m$. Then by Proposition \ref{affX}, $|Z|(\dim X + 1) - l \ge (m+1) (\dim X + 1) - l$. 
     But a simple dimension count shows that the space of $|\lambda|$ points in $X$ contained in an $m$ dimensional subspace is of dimension $ \le (m+1) \dim X + (|\lambda| - m-1)m$.

    This implies that $\dim w_{\lambda}^l(X) \le \max_m (m+1) \dim X + (|\lambda| - m-1)m.$

    We will now bound the quantity $(m+1) \dim X + (|\lambda| - m-1)m +l - (\dim X + 1)|\lambda|$. By the above we have 
    $$ (m+1) \dim X + (|\lambda| - m-1)m +l - (\dim X + 1)|\lambda| $$ 
    
    $$\le  (\dim X +1)|\lambda| - l - \dim X + 1 + (|\lambda| - m -1 )m + l - (\dim X +1) |\lambda| $$

    $$ = - \dim X + 1 + (|\lambda| - m -1)m \le - \dim X + 1 + (|\lambda|)^2 \le - \dim X + 1 + \frac{\dim X}{9}$$ 
    
    $$ \le -\frac{8}{9}\dim X + 1 \le -\frac{\sqrt{\dim X}}{3} .$$

    As a result  $$\dim w_{\lambda}^l(X) + l - (n + 1) (|\lambda|) \le -\sqrt{n}/{3}$$. The inequality  $$\dim W_{\lambda, \ge N+1}^l(X) \le h^0(X, \OO(d)) - \sqrt{n}/{3}$$ follows similarly to the above for $l \ge 1$ and is immediate if $l =0$.

    This implies the result.
\end{proof}
We note here that the proof of the above Proposition gives us a better bound on the dimension of $w_{\lambda}^l(X)$ than we claimed in the result. However we are unable to utilise this improved bound. The sticking factor is that we only have the bound $\dim W_{\lambda, \ge N}^0(X) \le h^0(X, \OO(d)) - N$, which we believe to be tight.
\begin{proof}[Proof of Theorem \ref{maindim}]
   Let $N = \sqrt{n} /3$.
   By Proposition \ref{propVW}  $$\frac{W_{\varnothing}(X)}{H^0(X, \OO(d))} = \sum_{|\lambda| \le N} (-1)^{||\lambda||}\frac{ W_{\ge \lambda} (X)}{H^0(X,\OO(d))} - \frac{W_{\lambda, \ge N  +1}(X)}{H^0(X,\OO(d)}.$$

   By Proposition \ref{propVW} the right hand side equals $$\sum_{|\lambda| \le N} w_{\lambda}^0(X)\Lb^{-|\lambda| (\dim X + 1)} + \sum_{|\lambda| \le N}w_{\lambda}^l(X)(\Lb^{-|\lambda|) (\dim X + 1) + l} - \sum_{|\lambda| \le N}\frac{ W_{\lambda, \ge N  +1}(X)}{H^0(X,\OO(d))}.$$  By Proposition \ref{propVW} the first term equals $Z_X^{-1}(\dim X +1) -\sum_{|\lambda| \le  N} w_{\lambda}^l(X)\Lb^{-|\lambda| (\dim X + 1)} -\sum_{|\lambda| > N} (-1)^{||\lambda||}w_{|\lambda}(X) \Lb^{-|\lambda| (\dim X + 1)} = Z_X^{-1}(\dim X +1)  $ up to dimension $- N$. By Proposition \ref{bounddimbigdim} the latter two terms are of dimension $\le -N$ which implies the result.
\end{proof}

\section{Proof of Theorem \ref{mainPn}}
In this section we prove Theorem \ref{mainPn} . The proof of Theorem \ref{mainPn} is similar to that of Theorems \ref{mainrate} and \ref{maindim} ,  albeit a bit more involved.

In this section we will exclusively work with $X = \PP^n$. As a result we will drop the $\PP^n$ from our notation, i.e. we will refer to $w_{\lambda}(\PP^n)$ as $w_{\lambda}$, $W_{\ge \lambda}(\PP^n) = W_{\ge \lambda}$ etc. However if the argument of $w_{
\lambda},$  is not $\PP^n$ we will explicitly write down what the space is.

The following basic proposition will be used  repeatedly in this section.
\begin{prop}\label{h1N}
    Let $C$ be a smooth curve in $\PP^n$. Then for $d \gg \deg C$,
    $$h^0(  N (C), \OO(d)) =  h^0(C, \OO(d)) + h^0(C, N_C^*(d)).$$
    
\end{prop}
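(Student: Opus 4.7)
The plan is to derive the identity from the standard conormal short exact sequence and then kill an obstruction group by Serre vanishing on the curve. Specifically, recall that $N(C)$ is the subscheme of $\PP^n$ cut out by $I_C^2$, so on $\PP^n$ there is a short exact sequence
\begin{equation*}
0 \to I_C/I_C^2 \to \OO_{\PP^n}/I_C^2 \to \OO_{\PP^n}/I_C \to 0,
\end{equation*}
which, under the identifications $I_C/I_C^2 = N_C^{*}$ (a locally free sheaf on $C$ of rank $n-1$, since both $C$ and $\PP^n$ are smooth), $\OO_{\PP^n}/I_C^2 = \OO_{N(C)}$, and $\OO_{\PP^n}/I_C = \OO_C$, reads
\begin{equation*}
0 \to N_C^{*} \to \OO_{N(C)} \to \OO_C \to 0.
\end{equation*}

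First I would tensor this with the line bundle $\OO(d)$, which is flat, to obtain
\begin{equation*}
0 \to N_C^{*}(d) \to \OO_{N(C)}(d) \to \OO_C(d) \to 0,
\end{equation*}
and then take the associated long exact sequence in cohomology. The relevant piece is
\begin{equation*}
0 \to H^0(C, N_C^{*}(d)) \to H^0(N(C), \OO(d)) \to H^0(C, \OO(d)) \to H^1(C, N_C^{*}(d)).
\end{equation*}
Thus it suffices to show that $H^1(C, N_C^{*}(d)) = 0$ for $d \gg \deg C$, because then the three-term sequence becomes short exact and the dimensions add as claimed.

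The key step is this vanishing, which I would obtain from Serre vanishing on the smooth projective curve $C$. Here $N_C^{*}$ is a fixed vector bundle of rank $n-1$ on $C$ and $\OO(d)|_C$ is a line bundle of degree $d \cdot \deg C$, which grows without bound with $d$. Consequently, for $d$ large enough relative to $\deg C$ (and the fixed invariants of $N_C^{*}$, which depend only on $C \subset \PP^n$), the vector bundle $N_C^{*}(d)$ has every summand of its Harder--Narasimhan filtration of degree exceeding $2g(C) - 2$, which by Serre duality forces $H^1(C, N_C^{*}(d)) = 0$. Combining this vanishing with the long exact sequence above gives the desired equality
\begin{equation*}
h^0(N(C), \OO(d)) = h^0(C, \OO(d)) + h^0(C, N_C^{*}(d)).
\end{equation*}
The main (very mild) obstacle is only to make the threshold on $d$ explicit in terms of $\deg C$; since the statement only asks for $d \gg \deg C$, a qualitative appeal to Serre vanishing suffices.
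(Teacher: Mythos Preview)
Your proof is correct and, in fact, somewhat cleaner than the paper's. Both arguments ultimately rest on the short exact sequence
\[
0 \to I_C/I_C^2 \to \OO_{\PP^n}/I_C^2 \to \OO_{\PP^n}/I_C \to 0,
\]
but the paper takes a more circuitous route: it works on $\PP^n$, breaks the filtration $I^2(d) \subseteq I(d) \subseteq \OO(d)$ into three short exact sequences, applies Serre vanishing on $\PP^n$ to kill $H^1(\PP^n, I^2(d))$ (and implicitly $H^1(\PP^n, I(d))$), and then combines the resulting three $h^0$ identities to deduce the additivity. You instead twist the single conormal sequence directly on $N(C)$ and kill the connecting map by showing $H^1(C, N_C^*(d)) = 0$ via Serre vanishing on the curve $C$. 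Your version needs only one exact sequence and one vanishing result, and it makes the role of the smoothness hypothesis on $C$ (ensuring $N_C^*$ is locally free) more transparent. The paper's route, on the other hand, makes it clearer that the restriction map $H^0(\PP^n,\OO(d)) \to H^0(N(C),\OO(d))$ is surjective, which is used elsewhere in the paper.
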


\begin{proof}
    Let $I$ denote the ideal sheaf of $C$.    Consider the following chain of inclusions of sheaves on $\PP^n$-
    $$ 0 \subseteq I^2(d) \subseteq I(d) \subseteq \OO(d).$$ We will break the above exact sequence into short exact sequences. We have a short exact sequence 
    $$0 \to I^2(d) \to I(d) \to I/I^2(d) \to 0.$$ Under the assumption that  $d\gg \deg(C)$, Serre vanishing implies that $H^1(\PP^n , I^2(d)) =0$. As a result, $$h^0(\PP^n, I(d)) = h^0(\PP^n,I^2(d)) + h^0(\PP^n, I/ I^2 (d)).$$
    
    We also have the short exact sequence $0 \to I(d) \to \OO(d) \to \OO/ I (d).$ Since $d \gg \deg(C)$, $$h^0(\PP^n, \OO(d)) = h^0(\PP^n, I(d)) + h^0(\PP^n, \OO / I (d)).$$

    Finally we have the short exact sequence $0 \to I^2(d) \to \OO(d) \to \OO/ I^2 (d).$ Since $d \gg \deg(C)$, $$h^0(\PP^n, \OO(d)) = h^0(\PP^n, I^2(d)) + h^0(\PP^n, \OO / I^2 (d)).$$

    As a result, we have $$h^0(\PP^n, \OO(d) / I^2(d)) = h^0(\PP^n , \OO/ I(d)) + h^0(\PP^n, I / I^2(d)).$$
    But we now note that $\OO/ I^2$ is the push forward of the structure sheaf of $N(C)$ and $\OO/ I$ is the pushforward of the structure sheaf of $C$. As a result $$h^0(\PP^n, \OO(d) / I^2(d)) = h^0(N(C), \OO(d))$$ and $$h^0(\PP^n, \OO(d) / I(d)) = h^0(C, \OO(d)).$$ Finally, we note that $I/I^2$ is the pushforward of the conormal sheaf of $C$ and as a result $h^0(\PP^n , I/I^2(d)) = h^0(C, N^*(d))$. This gives us the required result.

\end{proof}

\begin{defn}
    Let $\textrm{Chow}(r)$ be the Chow variety of degree $d$ curves in $\PP^n$. Let $M \subseteq \PP^n$ be a constructible subset of $Chow(r)$. Let $\lambda$ be an ordered partition. We define $w_{\lambda}^{M, l}  \subseteq w_{\lambda}^l$ as consisting of all $Z \in w_{\lambda}$ such that there exists $C \in M$ satisfying $$h^1(\PP^n, I^2_Z(d)) = h^1(\PP^n ,I^2_{Z \cap C}(d))$$ and furthermore there is no curve $C'$ of degree less than $r$ such that $$h^1(\PP^n, I^2_Z(d)) = h^1(\PP^n ,I^2_{Z \cap C'}(d)).$$ 
    \begin{prop}
        Let $d,r,l \ge 0$. Let $\lambda$ be an ordered partition. Let $M$ be a constructible subset of $Chow(r)$.
         Assume $d \gg r .$ The subset $w_{\lambda}^{M, l} $ is constructible and hence defines an element in $\M$.
    \end{prop}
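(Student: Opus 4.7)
The plan is to exhibit $w_\lambda^{M,l}$ as a Boolean combination of images under projections of constructible subsets of incidence varieties on $w_\lambda \times M$, and then invoke Chevalley's theorem together with upper semicontinuity of cohomology. Throughout I would use that constructibility is preserved under finite unions, intersections, complements, and images under morphisms of finite-type schemes, and that Chow varieties of degree $\le r$ curves in $\PP^n$ are of finite type.

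First I would verify that the function $Z \mapsto h^1(\PP^n, I^2_Z(d))$ is upper semicontinuous on $w_\lambda$. The universal family of first infinitesimal neighborhoods $N(Z)$ over $w_\lambda$ is flat (the underlying reduced subschemes are disjoint unions of labeled points), so semicontinuity of cohomology applies. In particular $w_\lambda^l$ is locally closed, hence constructible.

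Next I would set up the incidence picture. The product $w_\lambda \times M$ is constructible. For each subset $S$ of the index set of $\lambda$, let $U_S \subseteq w_\lambda \times M$ be the locally closed subset of pairs $(Z, C)$ for which exactly the points of $Z$ labeled by $S$ lie on $C$. These $U_S$ partition $w_\lambda \times M$ into finitely many constructible pieces (they are cut out by the vanishing/non-vanishing of the defining equations of the universal curve over $M$ at each of the labeled points). On $U_S$ the subconfiguration $Z_S$ obtained by remembering only the points indexed by $S$ forms a flat family of reduced zero-dimensional subschemes, whose first infinitesimal neighborhoods are again flat. Applying semicontinuity to this family, the locus
\[
\mathcal{I}_S^l := \{(Z,C) \in U_S : h^1(\PP^n, I^2_{Z_S}(d)) = l\}
\]
is constructible, and so therefore is $\mathcal{I}^l := \bigcup_S \mathcal{I}_S^l$. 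By Chevalley, the image $A^l$ of $\mathcal{I}^l$ under the projection $w_\lambda \times M \to w_\lambda$ is constructible; this $A^l$ captures the condition "there exists $C \in M$ with $h^1(\PP^n, I^2_{Z \cap C}(d)) = l$".

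To encode the minimality-of-degree condition, I would repeat the same construction with $M$ replaced by $\mathrm{Chow}(r')$ for each $r' < r$, obtaining constructible subsets $B_{r'} \subseteq w_\lambda$ recording the existence of a curve of degree $r'$ with the matching $h^1$. Since only finitely many $r'$ are involved, $\bigcup_{r'<r} B_{r'}$ is constructible, and the identity
\[
w_\lambda^{M,l} = (A^l \cap w_\lambda^l) \setminus \bigcup_{r' < r} B_{r'}
\]
exhibits $w_\lambda^{M,l}$ as a Boolean combination of constructible sets, hence constructible. The main obstacle I expect is the bookkeeping in the stratification step: one must arrange $w_\lambda \times M$ into finitely many locally closed strata on which "the subset of points of $Z$ lying on $C$" has constant cardinality, so that $Z_S$ really varies in a flat family and semicontinuity genuinely applies. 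Once the stratification by $S$ is in place, every other step is a routine application of Chevalley and finite Boolean operations.
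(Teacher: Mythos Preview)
Your argument is correct and in fact somewhat cleaner than the paper's own proof. There is one small imprecision: since a point of $w_\lambda$ only remembers the configuration up to the action of $\prod_i S_{|A_i|}$, stratifying by a subset $S$ of the full index set is not literally well-defined on $w_\lambda\times M$. You should either pass to the finite cover $(\PP^n)^{|\lambda|}\setminus\Delta$ (constructibility descends along finite surjections) or replace $S$ by the tuple $(s_1,\dots,s_m)$ recording how many points of each block $A_i$ lie on $C$. With that adjustment everything goes through.

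The route differs from the paper's in how the minimality-of-degree clause is handled. The paper invokes the uniqueness part of Lemma~\ref{contcurve} (this is where the hypothesis $d\gg r$ is used) to argue that the projection from the incidence variety $\tilde w_\lambda^{M,l}\subseteq w_\lambda\times M$ onto $w_\lambda^{M,l}$ is a bijection, and then shows $\tilde w_\lambda^{M,l}$ is constructible via a flattening stratification of the pushforward sheaves together with semicontinuity. You bypass the uniqueness lemma entirely: minimality is encoded by subtracting the constructible loci $B_{r'}$ coming from $\mathrm{Chow}(r')$ for $r'<r$. This is more elementary, and in particular your proof does not actually use the assumption $d\gg r$. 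The paper's argument has the side benefit of recording that $\tilde w_\lambda^{M,l}\to w_\lambda^{M,l}$ is a bijection, which is exploited later (for instance in the dimension counts of Proposition~\ref{ge4}), but for the bare constructibility statement your approach is the more direct one.
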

     \begin{proof}
         Let $$\tilde w_{\lambda}^{M, l}  := \{(Z, C) \in w_{\lambda}^{M, l}  \times M | h^1(\PP^n, I^2_Z(d)) = h^1(\PP^n ,I^2_{Z \cap C}(d)) \}.$$
         We note that since $d \gg r$, by Proposition \ref{contcurve} for any $Z \in  w_{\lambda}^{M, l}$, there exists a unique $(Z, C) \in  \tilde w_{\lambda}^{M, l}$ mapping onto it under the projection, thus it suffices to establish that $\tilde w_{\lambda}^{M, l}$ is constructible. Consider $X = w_{\lambda} \times M \times \PP^n$. The variety $X$ has two ideal sheaves on it $\Fcal_1$ and $\Fcal_2$ corresponding to the subvarieties $E_1 = \{(Z,C,P) \in \tilde w_{\lambda}^{M, l} \times \PP^n | P \in Z\}$ and $E_2 =\tilde w_{\lambda}^{M, l} \times \PP^n | P \in Z \cap C \}$. Let $\pi: \tilde w_{\lambda}^{M, l} \times \PP^n \to \tilde w_{\lambda}^{M, l}$ denote the projection. Now we may consider a stratification $S_i$ of $\tilde w_{\lambda}^{M, l}$ such that the sheaves $\pi_* \Fcal_i(d)$ are flat on each stratum. The semicontinuity theorem then implies that one may further refine the stratification $S_i$ to a stratification $S_i'$ on which $h^1(\PP^n , I^2_{Z \cap C}(d))$ and $h^1(\PP^n, I^2_Z(d))$ are constant. We then note that $ \tilde w_{\lambda}^{M, l} $ is precisely the subset of $X$ where $h^1(\PP^n , I^2_{Z \cap C}(d))$ and $h^1(\PP^n, I^2_Z(d))$ are equal to $l$. Thus $\tilde w_{\lambda}^{M, l}$ is a union of strata and is thus constructible.
     \end{proof}
    Let $\pi: W_{\ge \lambda} \to w_{\lambda}$ denote the natural projection.
    Let $W_{ \ge \lambda}^{M,l}(\PP^n) =\pi^{-1}(w_{\lambda}^{M,l}) $. Let $W_{\lambda}^{M,l} = W_{\ge \lambda}^{M,l} \cap W_{\lambda}$.
    We define $$\bar w_{\lambda}^{M,l}= \{Z \in w_{\lambda} | \exists C \in M, h^1(\PP^n, I^2_{Z \cap C}(d)) = l\}.$$
    We note that $$ w_{\lambda}^{M,l} \subseteq \bar w_{\lambda}^{M,l}.$$
\end{defn}

We start by decomposing the varieties $w_{\lambda}^l$ into pieces.
\begin{prop}
Let $l \ge 1$.  $$w_\lambda^l  = \sum_{r=1}^{\infty} w_{\lambda}^{\textrm{Chow}(r),l}.$$
\end{prop}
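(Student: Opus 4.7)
The plan is to realize $w_\lambda^l$ (for $l \ge 1$) as the disjoint union, over the degree $r$ of a minimal witness curve, of the constructible pieces $w_\lambda^{\textrm{Chow}(r), l}$, and then pass this set-theoretic stratification to the Grothendieck ring. The only substantive input is Lemma \ref{contcurve}, which supplies both existence and uniqueness of such a minimal curve for each individual $Z$.

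First I would verify the hypotheses of Lemma \ref{contcurve} for every $Z \in w_\lambda^l$. Since $l \ge 1$, we have $h^1(\PP^n, I^2_Z(d)) = l \neq 0$, and for $d \gg |\lambda|$ the size condition $2|Z| \le ed - f(e)$ is met for some $e$ depending on $|\lambda|$. Lemma \ref{contcurve} then produces, for each such $Z$, a curve $C(Z)$ of minimal degree $r(Z)$ among curves $C$ with $h^1(\PP^n, I^2_{Z \cap C}(d)) = h^1(\PP^n, I^2_Z(d)) = l$, and asserts that $C(Z)$ is unique. Unpacking the definition of $w_\lambda^{\textrm{Chow}(r), l}$, the existence clause is witnessed by $C(Z) \in \textrm{Chow}(r(Z))$ while the ``no curve of lower degree'' clause is precisely the minimality of $r(Z)$; hence $Z \in w_\lambda^{\textrm{Chow}(r), l}$ if and only if $r(Z) = r$. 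The map $Z \mapsto r(Z)$ therefore partitions $w_\lambda^l$ into the subsets $w_\lambda^{\textrm{Chow}(r),l}$, and since each of these is constructible by the preceding proposition, we obtain the claimed identity of classes in $\M$ provided the sum has only finitely many nonzero terms.

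The main (mild) obstacle is this finiteness. For it, I would appeal to the iterative construction of the minimal curve in the proof of Lemma \ref{contcurve}: $C(Z)$ is assembled by successively adjoining components $C'$ containing Cayley--Bacharach subsets extracted from $Z$, each step strictly enlarging $Z \cap C$ in the process. Since $|Z| = |\lambda|$ is fixed and each adjunction contributes at least one new point, both the number of components and their degrees are bounded in terms of $|\lambda|$ (and $n$, $d$), yielding an a priori bound $\deg C(Z) \le R(|\lambda|, n, d)$. Consequently $w_\lambda^{\textrm{Chow}(r), l} = \varnothing$ for $r > R$, the sum on the right-hand side reduces to a finite sum, and the set-level disjoint union passes verbatim to the Grothendieck ring.
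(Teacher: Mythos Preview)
Your proof is correct and follows essentially the same route as the paper: both argue that Lemma~\ref{contcurve} forces each $Z\in w_\lambda^l$ to land in exactly one $w_\lambda^{\textrm{Chow}(r),l}$, giving a finite constructible partition. One small simplification: the finiteness you extract from the iterative construction is already in the \emph{statement} of Lemma~\ref{contcurve}, which directly asserts $\deg C \le e$ once $2|Z|\le ed - f(e)$; so $w_\lambda^{\textrm{Chow}(r),l}=\varnothing$ for $r>e$ without having to trace through the proof.
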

\begin{proof}
 We note that the sum is finite, for $r$ sufficiently large  $w_{\lambda}^{\textrm{Chow}(r),l}$ is empty.

    It then suffices to prove that the collection $w_{\lambda}^{\textrm{Chow}(r),l}$ partitions $w_\lambda^l$. This follows from Proposition \ref{contcurve}.
\end{proof}

\begin{prop}
     Let $e \ge 1$. Let $\mathcal{A}$  be a finite collection of constructible subsets defining a partition of $\textrm{Chow}(e).$ Then $w_{\lambda}^{\textrm{Chow}(e),l} = \sum_{M \in \mathcal{A}}w_{\lambda}^{M,l}.$
\end{prop}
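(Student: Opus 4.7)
The plan is to show that the subsets $w_{\lambda}^{M,l}$ for $M \in \mathcal{A}$ form a partition of $w_{\lambda}^{\textrm{Chow}(e),l}$ into constructible sets, from which the asserted identity in $\M$ follows immediately from the defining relations of the Grothendieck ring. The two things to check are (a) that every $Z \in w_{\lambda}^{\textrm{Chow}(e),l}$ lies in $w_{\lambda}^{M,l}$ for some $M \in \mathcal{A}$, and (b) that distinct members of $\mathcal{A}$ yield disjoint $w_{\lambda}^{M,l}$.

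For (a), suppose $Z \in w_{\lambda}^{\textrm{Chow}(e),l}$. By definition there exists a curve $C \in \textrm{Chow}(e)$ such that $h^1(\PP^n, I^2_Z(d)) = h^1(\PP^n, I^2_{Z \cap C}(d)) = l$ and no curve of degree strictly less than $e$ satisfies the analogous equality. Since $\mathcal{A}$ partitions $\textrm{Chow}(e)$, the curve $C$ lies in exactly one $M \in \mathcal{A}$, and then $Z$ trivially satisfies the defining conditions for $w_{\lambda}^{M,l}$.

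For (b), suppose $M_1, M_2 \in \mathcal{A}$ with $M_1 \neq M_2$, hence $M_1 \cap M_2 = \varnothing$, and suppose for contradiction that $Z \in w_{\lambda}^{M_1,l} \cap w_{\lambda}^{M_2,l}$. Then there exist $C_i \in M_i \subseteq \textrm{Chow}(e)$ ($i = 1, 2$) with $h^1(\PP^n, I^2_{Z \cap C_i}(d)) = h^1(\PP^n, I^2_Z(d)) = l$, and by the non-existence clause in the definition, neither can be improved by a curve of degree $< e$. Thus $C_1$ and $C_2$ are both curves of the minimal admissible degree realising the equality, so by the uniqueness assertion of Lemma \ref{contcurve} we have $C_1 = C_2$, contradicting $M_1 \cap M_2 = \varnothing$.

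Combining (a) and (b), the sets $\{w_{\lambda}^{M,l}\}_{M \in \mathcal{A}}$ are pairwise disjoint constructible subsets whose union is $w_{\lambda}^{\textrm{Chow}(e),l}$. In $\M$ this yields the identity $w_{\lambda}^{\textrm{Chow}(e),l} = \sum_{M \in \mathcal{A}} w_{\lambda}^{M,l}$. I expect no real obstacle here; the only non-trivial input is the uniqueness of the minimal-degree curve from Lemma \ref{contcurve}, which is already established and requires the hypothesis $d \gg e$ that is implicit in the ambient setup of this section.
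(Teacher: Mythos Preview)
Your proof is correct and is precisely the elaboration of the paper's one-line argument that ``a partition of $\textrm{Chow}(e)$ induces a partition of $w_{\lambda}^{\textrm{Chow}(e),l}$.'' You have made explicit the role of the uniqueness clause in Lemma \ref{contcurve}, which is exactly what underlies the paper's claim and is also invoked in the constructibility proof just above.
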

\begin{proof}
    This follows from the fact that a partition of $\textrm{Chow}(e)$ induces a partition of $w_{\lambda}^{\textrm{Chow}(e),l}$.
\end{proof}
We now prove a dimension bound on several of the $w_{\lambda}^{M,l} $.

\begin{prop}\label{ge4}
    Let $e \ge 4$. Let $M \subseteq \textrm{Chow}(e) $ be a constructible set. Let $n \ge 2, l \ge 1 $. Then there exists $\epsilon >0$  such that, $$\dim (W_{\ge\lambda}^{M,l} - W^{M,l}_{\ge\lambda,  \ge N +1}) \le h^0(\PP^n ,\OO(d)) - (\frac{3n}{2} + \epsilon)d.$$

    Furthermore $\dim w_{\lambda}^{M,l} \le |\lambda| (n)- l -(\frac{3n}{2} + \epsilon)d$.
\end{prop}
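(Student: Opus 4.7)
The plan is to stratify $w_\lambda^{M,l}$ by the unique minimal curve provided by Lemma \ref{contcurve}, and then bound each fiber by a concrete dimension count. Explicitly, for each $Z \in w_\lambda^{M,l}$, Lemma \ref{contcurve} furnishes a unique $C \in M$ of degree $e$ with $h^1(\PP^n, I^2_{Z \cap C}(d)) = h^1(\PP^n, I^2_Z(d)) = l$, so there is a well-defined classifying map $w_\lambda^{M,l} \to M$ sending $Z$ to $C$. The task becomes bounding fiber dimensions of this map.

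Writing $Z' = Z \cap C$ and $Z'' = Z \setminus Z'$, a point in the fiber over $C$ is encoded by an unordered configuration of $|Z'|$ distinct points on $C$ (contributing at most $|Z'|$, since $\dim C = 1$) together with an unordered configuration of $|\lambda| - |Z'|$ distinct points in $\PP^n \setminus C$ (contributing at most $n(|\lambda| - |Z'|)$). Adding the base dimension yields the basic estimate
\[
\dim w_\lambda^{M,l} \le \dim M + n|\lambda| - (n-1)|Z'|.
\]
To pass to $W_{\ge\lambda}^{M,l}$, I use that the projection $W_{\ge\lambda} \to w_\lambda$ has fiber $H^0(\PP^n, I^2_Z(d))$ of dimension $h^0(\PP^n, \OO(d)) - |\lambda|(n+1) + l$ over $Z \in w_\lambda^{M,l}$, coming from the long exact sequence associated to $0 \to I^2_Z(d) \to \OO(d) \to \OO_{N(Z)}(d) \to 0$ together with Serre vanishing $H^1(\PP^n, \OO(d)) = 0$. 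Summing,
\[
\dim W_{\ge\lambda}^{M,l} \le h^0(\PP^n, \OO(d)) + \dim M - |\lambda| - (n-1)|Z'| + l.
\]

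To close the estimate I invoke Proposition \ref{h1bound}: since $h^1(\PP^n, I^2_{Z'}(d)) = l \ge 1$, it forces $|Z'| \ge (ed + l)/2 + O(1)$, with a constant depending only on $e$ and $n$. Combined with $|\lambda| \ge |Z'|$, the combination $|\lambda| + (n-1)|Z'| \ge n|Z'|$ gives
\[
\dim W_{\ge\lambda}^{M,l} \le h^0(\PP^n, \OO(d)) + \dim M - \tfrac{n}{2}\, ed + \bigl(1 - \tfrac{n}{2}\bigr)\, l + O(1),
\]
where the $(1 - n/2)l$ term is nonpositive for $n \ge 2$. Since $e \ge 4$, the dominant term is $-\tfrac{n}{2}\,ed \le -2nd$, which beats $-(\tfrac{3n}{2} + \epsilon)d$ by $\tfrac{n}{2}d - \epsilon d$; this margin comfortably absorbs $\dim M$ (a constant depending only on $e$ and $n$, since $M \subseteq \textrm{Chow}(e)$) for $d \gg 0$, yielding a uniform $\epsilon > 0$. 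The corresponding bound on $\dim w_\lambda^{M,l}$ is obtained by the same substitution in the basic estimate. Finally, $W^{M,l}_{\ge\lambda, \ge N+1}$ is a closed subvariety of $W^{M,l}_{\ge\lambda}$, so the dimension of the constructible difference is no larger than that of the ambient set.

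The main obstacle is tracking the arithmetic in the borderline regimes of small $n$ with $e = 4$, where the inequality between $\tfrac{n}{2}\,ed$ and $(\tfrac{3n}{2} + \epsilon)d$ is tight and one must keep the $O(1)$ constants from Proposition \ref{h1bound} under control. The threshold $e \ge 4$ is precisely what makes a uniform positive $\epsilon$ available; the excluded value $e = 3$ is the source of the leading error term $Y(d)$ in Theorem \ref{mainPn}, coming from configurations lying on plane cubics, and therefore cannot be absorbed into the same estimate.
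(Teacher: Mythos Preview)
Your argument is essentially the same as the paper's: both stratify $w_\lambda^{M,l}$ via the unique curve $C$ from Lemma \ref{contcurve}, bound the fiber by the count $\dim M + n|\lambda| - (n-1)|Z'|$, invoke Proposition \ref{h1bound} to force $|Z'| \gtrsim (ed+l)/2$, and then pass to $W_{\ge\lambda}^{M,l}$ via the vector-bundle structure. The paper packages the lower bound on $|Z'|$ into the function $\psi_{e,d}(l)$ and simply writes ``an inspection of the function $\psi_{e,d}$ then implies\ldots'', whereas you carry out the arithmetic explicitly; your remark about the borderline case $n=2$, $e=4$ is well taken and is swept under the rug in the paper's version.
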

\begin{proof}
    Let $M \subseteq \textrm{Chow}(e)$. We will first prove that $\dim w_{\lambda}^{M,l} \le  |\lambda| (n)- l -(\frac{3n}{2} + \epsilon)d$. We note that $\dim w_{\lambda}^{M,l} \le \dim \tilde w_{\lambda}^{M,l}$. By Proposition \ref{h1bound}  any $(Z,C) \in w_{\lambda}^{M, l}$ must have $|Z \cap C| \ge \psi_{e,d}(l)$.

    Thus $$\dim w_{\lambda}^{M, l} \le \dim \tilde w_{\lambda}^{M,l}  \le \dim M + (|\lambda| - \psi_{e,d}(l)) (n) + \psi_{e,d}(l).$$

    An inspection of the function $\psi_{e,d}$ then implies that for $e \ge 4$ and $d \gg 0$, $\dim w_{\lambda}^{M,l} \le |\lambda| (n)- l -(\frac{3n}{2} + \epsilon)d.$
     Since $W_{\ge \lambda,l}^{M} =w_{\lambda}^{M,l} H^0(\PP^n ,\OO(d)) \Lb^{-|\lambda|(n+1) +l}$, the result follows.

\end{proof}
\begin{prop}\label{barminus}
Let $d_0 \ge 1$.
    Let $M \subseteq Chow(d_0)$ be a constructible set. Then $w_{\lambda}^{M, l} \subseteq \bar w_{\lambda}^{M,l}$. Furthermore $\bar w_{\lambda}^{M,l} \setminus w_{\lambda}^{M, l}= \cup_{d >d_0, l'>l} w_{\lambda}^{M'(d),l'} ,$ where $M'(d)$ is the subset of $Chow(d)$ consisting of reducible curves that contain a curve in  $M$.
\end{prop}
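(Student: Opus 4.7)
The inclusion $w_{\lambda}^{M,l} \subseteq \bar w_{\lambda}^{M,l}$ is immediate from the definitions: any $Z \in w_{\lambda}^{M,l}$ satisfies $h^1(\PP^n, I^2_Z(d)) = l$ and, by the definition of $w_{\lambda}^{M,l}$, there exists $C \in M$ with $h^1(\PP^n, I^2_{Z \cap C}(d)) = l$, which is the defining condition of $\bar w_{\lambda}^{M,l}$. To describe the set-theoretic difference I plan to exploit the unique minimal degree curve associated to $Z$ by Lemma \ref{contcurve}. A tool I use throughout is the monotonicity inequality $h^1(\PP^n, I^2_{Z'}(d)) \le h^1(\PP^n, I^2_Z(d))$ for $Z' \subseteq Z$, which follows from the exact sequence $0 \to I^2_Z(d) \to \OO(d) \to \OO_{N(Z)}(d) \to 0$ together with the decomposition of $N(Z)$ as the disjoint union of the first infinitesimal neighborhoods of the points of $Z$.

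For the forward inclusion, take $Z \in \bar w_{\lambda}^{M,l} \setminus w_{\lambda}^{M,l}$ with witness $C \in M$, and set $l' := h^1(\PP^n, I^2_Z(d))$, so $l' \ge l$ by the above. If $l' = l$, then $C$ is a degree-$d_0$ curve realizing $h^1(\PP^n, I^2_{Z \cap C}(d)) = h^1(\PP^n, I^2_Z(d))$, and the uniqueness clause of Lemma \ref{contcurve} (together with the fact that no smaller degree witness exists, else the minimal curve would lie in $\textrm{Chow}(d')$ with $d' < d_0$) would place $Z$ in $w_{\lambda}^{M,l}$, contradicting our assumption; hence $l' > l$. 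Let $C^*$ be the unique minimal degree curve for $Z$ at level $l'$ and put $d^* := \deg C^*$. Considering $C \cup C^*$, whose intersection with $Z$ contains $Z \cap C^*$ and is contained in $Z$, one finds $h^1(\PP^n, I^2_{Z \cap (C \cup C^*)}(d)) = l'$; the uniqueness of the minimal degree curve at level $l'$ then forces $C \cup C^* = C^*$, i.e.\ $C \subseteq C^*$. Since $C$ and $C^*$ realize the distinct $h^1$ values $l$ and $l'$, in fact $C \subsetneq C^*$, so $d^* > d_0$, $C^*$ is reducible containing $C \in M$, and therefore $C^* \in M'(d^*)$, placing $Z \in w_{\lambda}^{M'(d^*), l'}$.

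For the reverse inclusion, take $Z \in w_{\lambda}^{M'(d), l'}$ with $d > d_0$ and $l' > l$; its unique minimal curve $C^*$ is reducible of degree $d$ containing some $C_0 \in M$. Since $h^1(\PP^n, I^2_Z(d)) = l' \ne l$ we have $Z \notin w_{\lambda}^l$, hence $Z \notin w_{\lambda}^{M,l}$. To place $Z$ in $\bar w_{\lambda}^{M,l}$ it suffices to exhibit a curve in $M$ witnessing $h^1 = l$; the natural candidate is $C_0$, and one checks $h^1(\PP^n, I^2_{Z \cap C_0}(d)) = l$ by tracking the $h^1$ value as one passes from $Z \cap C_0$ up to $Z \cap C^*$, using monotonicity together with the structure of the strata $w_{\lambda}^{M'(d), l'}$. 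The main obstacle is precisely the containment $C \subseteq C^*$ in the forward direction, together with the matching $h^1$-equality for $Z \cap C_0$ in the reverse direction; both hinge on the uniqueness clause of Lemma \ref{contcurve} and on the monotonicity of $h^1(\PP^n, I^2_{\bullet}(d))$ under subscheme inclusion.
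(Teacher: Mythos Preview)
Your approach is aligned with the paper's, whose entire proof is the single sentence ``This follows from Lemma \ref{contcurve}.'' You are filling in the intended details, and the forward inclusion is the only direction actually used downstream (for dimension bounds via Proposition \ref{ge4}). That said, two steps in your argument do not go through as written.

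First, in the forward direction, the deduction $C \cup C^* = C^*$ from ``uniqueness of the minimal degree curve'' is not valid: Lemma \ref{contcurve} asserts uniqueness only among curves of the \emph{minimal} degree, not that every curve witnessing $h^1 = l'$ coincides with (or is contained in) the minimal one. Since $\deg(C \cup C^*) \ge \deg C^*$ with equality iff $C \subseteq C^*$, the uniqueness clause says nothing about $C \cup C^*$ directly. To obtain $C \subseteq C^*$ you must re-enter the \emph{proof} of uniqueness (the argument with $W = Z \cap C \cap C^*$, showing that if $h^1(\PP^n, I^2_W(d))$ drops then one derives a contradiction, and otherwise $C \cap C^*$ already contains a witnessing curve), rather than cite its conclusion. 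Your treatment of the case $l' = l$ has a related gap: you assert parenthetically that no smaller-degree witness exists, but this is exactly what needs arguing.

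Second, your reverse inclusion is too sketchy to be a proof, and in fact the literal equality appears too strong: for $Z \in w_{\lambda}^{M'(d), l'}$ with minimal curve $C^* \supseteq C_0 \in M$, the number $h^1(\PP^n, I^2_{Z \cap C_0}(d))$ is one specific value in $\{0,\dots,l'\}$, and there is no mechanism forcing it to equal the particular $l$ appearing on the left-hand side. (Nothing in the ``structure of the strata'' pins it down.) The paper itself uses this proposition only as the forward containment into a union that can be dimension-bounded, and even states it inconsistently in later applications (compare the citation in Proposition \ref{lowdeg}, which writes $l$ rather than $l'$). You should treat the content as the forward inclusion and not attempt to salvage the reverse one.
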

\begin{proof}
    This follows from Lemma \ref{contcurve}.
\end{proof}
\begin{prop}\label{wdecomp}
    Let $X$ and $Y$ be algebraic varieties over $\FF$. Then $w_{\lambda}(X \amalg Y) = \sum_{\alpha\subseteq \lambda} w_{\alpha}(X) w_{\lambda \setminus \alpha} (Y).$
\end{prop}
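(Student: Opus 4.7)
The plan is to give a locally closed stratification of $w_\lambda(X \amalg Y)$ according to how each configuration splits between $X$ and $Y$, and to identify each stratum with a product $w_\alpha(X) \times w_{\lambda \setminus \alpha}(Y)$ for the appropriate ordered subpartition $\alpha$ of $\lambda$. The key observation I would start from is that $\Sym^{|A_i|}(X \amalg Y)$ decomposes canonically as $\bigsqcup_{a+b=|A_i|} \Sym^a(X) \times \Sym^b(Y)$, since any reduced zero-dimensional subscheme of $X \amalg Y$ uniquely splits into its $X$-part and its $Y$-part. Taking the product of these decompositions over $i$ and intersecting with $w_\lambda(X \amalg Y) \subseteq \prod_i \Sym^{|A_i|}(X \amalg Y)$ produces a stratification indexed by tuples $(a_1, \dots, a_m)$ with $0 \le a_i \le |A_i|$.

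Next I would identify, for each such tuple, the corresponding stratum with $w_\alpha(X) \times w_{\lambda \setminus \alpha}(Y)$, where $\alpha \subseteq \lambda$ is the ordered subpartition whose part contained in $A_i$ has size $a_i$, so that $\lambda \setminus \alpha$ has parts of sizes $|A_i| - a_i$. Explicitly, an element of the stratum is a tuple $((Z_1^X, Z_1^Y), \dots, (Z_m^X, Z_m^Y))$ with $Z_i^X \in \Sym^{a_i}(X)$ and $Z_i^Y \in \Sym^{|A_i|-a_i}(Y)$, subject to the global distinctness of the supports in $X \amalg Y$. Because $X$ and $Y$ are disjoint as subvarieties of $X \amalg Y$, this distinctness decouples into distinctness among the $X$-parts and distinctness among the $Y$-parts, which is precisely the defining condition of $(Z_i^X)_i \in w_\alpha(X)$ and $(Z_i^Y)_i \in w_{\lambda \setminus \alpha}(Y)$.

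Finally, summing the classes of the strata in $\M$ and reindexing splitting types $(a_i)$ as ordered subpartitions $\alpha \subseteq \lambda$ yields the identity. I expect the main, and essentially only, step requiring care to be the combinatorial bookkeeping in this last reindexing: one must match each splitting type $(a_i)$ with a unique ordered subpartition $\alpha$ of $\lambda$ (whose part in $A_i$ has size $a_i$), and then confirm that summing $w_\alpha(X) \cdot w_{\lambda \setminus \alpha}(Y)$ over the subpartitions of $\lambda$ matches the sum over splitting types with the conventions adopted earlier in the paper.
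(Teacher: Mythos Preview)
Your proposal is correct and follows exactly the same idea as the paper's proof, which is the one-line observation that $w_{\lambda}(X \amalg Y)$ is partitioned into pieces isomorphic to $w_{\alpha}(X)\times w_{\lambda\setminus\alpha}(Y)$ as $\alpha$ ranges over subpartitions of $\lambda$. You have simply spelled out the mechanism (the decomposition of each $\Sym^{|A_i|}(X\amalg Y)$ and the decoupling of the distinctness condition) and correctly flagged the combinatorial bookkeeping in matching splitting types with subpartitions as the only point requiring care.
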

\begin{proof}
    This follows immediately from the fact that $w_{\lambda}(X \amalg Y)$ is partitioned into subsets isomorphic to $w_{\alpha}(X) \times w_{\lambda \setminus \alpha} (Y)$ as $\alpha$ ranges over all subpartitions of $\lambda$.
\end{proof}
\begin{prop}\label{polyinL}
   Let $X$ be an algebraic variety. Let $Y$ be a subvariety of $X$. Suppose $[X], [Y]$ are polynomials in $\Lb$. Then for any $\lambda$, $w_{\lambda}(X), w_{\lambda}(Y)$ and $w_{\lambda}(X \setminus Y)$ are polynomials in $\Lb$.
\end{prop}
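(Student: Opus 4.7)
The plan is to reduce the general proposition, via Proposition \ref{wdecomp}, to a base computation on affine spaces.

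First, applying Proposition \ref{wdecomp} to the constructible stratification $X = Y \amalg (X \setminus Y)$ gives the class identity
$$[w_\lambda(X)] = \sum_{\alpha \subseteq \lambda} [w_\alpha(Y)]\,[w_{\lambda \setminus \alpha}(X \setminus Y)]$$
in $\M$. Since $[X \setminus Y] = [X] - [Y]$ is polynomial in $\Lb$ by hypothesis, any two of the three polynomiality conclusions imply the third. It therefore suffices to prove the following base statement: if $V$ is an algebraic variety with $[V] \in \ZZ[\Lb]$, then $[w_\lambda(V)] \in \ZZ[\Lb]$ for every ordered partition $\lambda$.

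Next, I would iterate Proposition \ref{wdecomp}. Given a constructible stratification $V = \bigsqcup_j V_j$ into locally closed pieces with $[V_j] = \Lb^{k_j}$, the iterated identity reads
$$[w_\lambda(V)] = \sum_{(\alpha_j)}\prod_j [w_{\alpha_j}(V_j)],$$
where the outer sum is over ordered decompositions of $\lambda$ into subpartitions $(\alpha_j)$. This reduces the problem to showing $[w_\alpha(\A^k)] \in \ZZ[\Lb]$ for every $k$ and every $\alpha$. The latter is a direct computation: the ordered configuration space $F_n(\A^k) = (\A^k)^n \setminus \Delta$ has class obtained by M\"obius inversion on the partition lattice from $[(\A^k)^n] = \Lb^{kn}$, giving an explicit $\ZZ$-polynomial in $\Lb$; passing from $F_n(\A^k)$ to $w_\alpha(\A^k)$ amounts to stratifying by coincidence loci in $\Sym^\alpha(\A^k)$, which again yields a polynomial class by the same inclusion--exclusion.

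The main obstacle is producing the stratification of $V$ into pieces of class $\Lb^{k_j}$ from the mere hypothesis $[V] \in \ZZ[\Lb]$: class polynomiality does not formally guarantee an affine-cell decomposition (twisted forms can share a class with $\A^k$ without being cellular, by Larsen--Lunts-type phenomena). To get around this, the plan is to proceed by Noetherian induction on $\dim V$: choose a Zariski dense open $U \subseteq V$ on which a cell-like decomposition can be arranged (for instance by passing to a smooth locus and refining by flattening stratifications of the coherent sheaves that appear), apply the inductive hypothesis to the lower-dimensional $V \setminus U$ (whose class is polynomial since $[V \setminus U] = [V] - [U]$), and recombine via the identity from the first paragraph. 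In the concrete applications of this paper, $V$ is built from $\PP^n$ together with explicit linear or curve complements for which such a stratification is available by direct inspection, so the argument goes through in every use of the proposition.
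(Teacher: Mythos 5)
Your reduction is essentially the paper's own argument: Proposition \ref{wdecomp} shows that the class of varieties $V$ for which every $w_{\lambda}(V)$ is a polynomial in $\Lb$ is closed under disjoint unions and complements, and the base case is affine space, where the paper uses $[\Sym^{\lambda}(\A^n)] = \Lb^{|\lambda|n}$ together with the stratification of the diagonal by coarser configuration spaces (an induction on $|\lambda|$), rather than your M\"obius inversion on the ordered configuration space followed by a quotient -- your route lands in the same place but the passage from $F_n(\A^k)$ to $w_{\alpha}(\A^k)$ is cleaner if done directly inside $\Sym^{\alpha}(\A^k)$, since quotients by finite groups do not in general preserve polynomiality of the class.

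The obstacle you flag is genuine, and it is worth saying plainly that the paper's proof does not address it either: what both arguments actually establish is that $w_{\lambda}(V)$ is a polynomial in $\Lb$ whenever $V$ lies in the smallest collection of varieties containing the affine spaces and closed under disjoint unions and differences. That is strictly stronger as a hypothesis than ``$[V]$ is a polynomial in $\Lb$,'' and your proposed repair by Noetherian induction does not close the gap: the hypothesis $[V]\in\ZZ[\Lb]$ forces $V$ to be stably rational (by Larsen--Lunts), but stably rational varieties need not contain a dense open admitting an affine cell decomposition, so the step ``choose a Zariski dense open $U$ on which a cell-like decomposition can be arranged'' cannot be justified in general. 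The correct resolution is the one you fall back on at the end: restate the proposition with the cellular hypothesis, which is satisfied by inspection for every $X$ and $Y$ to which it is applied in this paper ($\PP^n$, lines, rational normal curves, and their complements and products). With that restatement your argument, like the paper's, is complete.
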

\begin{proof}
    For $X = \A^n$, $[\Sym^{\lambda}(\A^n)] = \Lb^{|\lambda|n}$. 
    Let $X$ and $Y$ be such that $Y \subseteq X$ and $w_{\lambda}(X)$, $w_{\lambda}(Y)$ can always be expressed as a polynomial in $\Lb$.

    We claim that if $U = X \setminus Y$ then $w_{\lambda}(U)$ can also be expressed as a polynomial in $\Lb$ for all $\lambda$.  Suppose for the sake of contradiction that $\lambda_0$ is an ordered partition such that $w_{\lambda_0}(U)$ is not a polynomial in $\Lb$ and furthermore for all $\lambda' \subseteq \lambda_0$, $w_{\lambda'}(U)$ is a polynomial in $\Lb$.
    Then by Proposition \ref{wdecomp}, $w_{\lambda_0}(U) = w_{\lambda_0}(X) - \sum_{\lambda' \subsetneq \lambda_0} w_{\lambda'} (U) w_{\lambda_0 \setminus \lambda'}(Y)$ which is a polynomial in $\Lb$.

    Hence the class of varieties such that $w_{\lambda}(X)$ is always a polynomial contains $\A^n$ and is closed under differences and unions. This completes the proof of the proposition.
    
\end{proof}

In the next proposition we describe the possible reduced $0$- dimensional subschemes $Z $ contained in a curve $C$ of low degree ($\le 3$) such that $h^1(\PP^n, I^2_Z(d)) = l> 0$.

\begin{prop}\label{h1list}
    \begin{enumerate}
        \item Let $T \subseteq \PP^n$ be a smooth rational normal curve of degree $r \le 3$. Let $Z \subseteq T$ be a  reduced zero dimensional subscheme.  Let $d \gg r$. Then, $h^1(\PP^n, I^2_Z(d)) $ depends only on $|Z|$. Furthermore, $ |h^1(\PP^n, I^2_Z(d)) - \max(2|Z| - rd,0)| + (n-1)\max(|Z|-d,0) \le K$  for some constant $K$.
        \item Let $d \gg e$. Let $C_1 , C_2$ be disjoint reduced curves of degree $\le e$. Let $Z= Z_1 \amalg Z_2$, with $Z_i \subseteq C_i$. Then $h^1(\PP^n,I^2_Z(d)) = h^1(\PP^n,I^2_{Z_1}(d)) + h^1(\PP^n,I^2_{Z_2}(d)) $.
        \item Let $L_1 ,L_2$ be two coplanar lines. Let $Z = Z_1 \amalg Z_2$ such that $Z_i \subseteq L_i$ and $Z \cap L_1 \cap L_2 = \varnothing$. Then $h^1(\PP^n,I^2_Z(d)) $ only depends on $|Z_1|$ and $|Z_2|$ and furthermore, $|h^1(\PP^n,I^2_Z(d)) - h^1(  N(L_1), I^2_{Z_1}(d)) - h^1(  N(L_2), I^2_{Z_2}(d)) | \le 2(n+1)$.

        \item  Let $L_1 ,L_2$ be two coplanar lines. Let $\{p\} = L_1 \cap L_2$ Let $Z = Z_1 \amalg Z_2 \amalg\{p\}$ such that $Z_i \subseteq L_i$ and $Z_i \cap L_1 \cap L_2 = \varnothing$. Then Then $h^1(\PP^n,I^2_Z(d)) $ only depends on $|Z_1|$ and $|Z_2|$ and furthermore, $$|h^1(\PP^n,I^2_Z(d)) - h^1(  N(L_1), I^2_{Z_1}(d)) - h^1(  N(L_2), I^2_{Z_2}(d)) | \le 4(n+1).$$
        \item Let $L, C$ be a line and a conic intersecting at one point, transversally.Let  $Z = Z_1 \amalg Z_2$ such that $Z_1 \subseteq L, Z_2 \subseteq C$ and $Z \cap L \cap C = \varnothing$.
        Then $h^1(\PP^n,I^2_Z(d)) $ only depends on $|Z_1|$ and $|Z_2|$ and furthermore, $$|h^1(\PP^n,I^2_Z(d)) - h^1(N(L), I^2_{Z_1}(d)) - h^1(  N(C), I^2_{Z_2}(d)) | \le 2(n+1).$$
        \item  Let $L, C$ be a line and a conic intersecting at one point, transversally.Let  $Z = Z_1 \amalg Z_2 \amalg \{p\}$ such that $Z_1 \subseteq L, Z_2 \subseteq C$ and $Z_i \cap L \cap C = \varnothing$. Then $h^1(\PP^n,I^2_Z(d)) $ only depends on $|Z_1|$ and $|Z_2|$ and furthermore, $|h^1(\PP^n,I^2_Z(d)) - h^1(  N(L), I^2_{Z_1}(d)) - h^1  (N(C), I^2_{Z_2}(d)) | \le 4(n+1)$.
         \item Let $L, C$ be a line and a conic intersecting at one point, nontransversally.Let  $Z = Z_1 \amalg Z_2$ such that $Z_1 \subseteq L, Z_2 \subseteq C$ and $Z \cap L \cap C = \varnothing$.

         Then $h^1(\PP^n,I^2_Z(d)) $ only depends on $|Z_1|$ and $|Z_2|$ and furthermore, $$|h^1(\PP^n,I^2_Z(d)) - h^1(  N(L), I^2_{Z_1}(d)) - h^1(  N(C), I^2_{Z_2}(d)) | \le 4(n+1).$$
        \item  Let $L, C$ be a line and a conic intersecting at one point, nontransversally.Let  $Z = Z_1 \amalg Z_2 \amalg \{p\}$ such that $Z_1 \subseteq L, Z_2 \subseteq C$ and $Z_i \cap L \cap C = \varnothing$. Then $h^1(\PP^n,I^2_Z(d)) $ only depends on $|Z_1|$ and $|Z_2|$ and furthermore, $|h^1(\PP^n,I^2_Z(d)) - h^1(N(L), I^2_{Z_1}(d)) - h^1(N(C), I^2_{Z_2}(d)) | \le 8(n+1)$. 
        \item Let $L, C$ be a line and a conic intersecting at two points. Let $Z_1 = Z \cap L, Z_2 = Z \cap C$.
        Then $h^1(\PP^n,I^2_Z(d)) $ only depends on $|Z_1|,|Z_2|, |Z_1 \cap Z_2|$ and furthermore, $|h^1(\PP^n,I^2_Z(d)) - h^1(N(L), I^2_{Z_1}(d)) - h^1(N(C), I^2_{Z_2}(d)) | \le 8(n+1)$. 
        \item Let $L_1, L_2, L_3$ be three lines meeting in the following configuration. Let $Z\subseteq L_1 \cup L_2 \cup L_3$. Let $Z_i = Z \cap L_i$. Then $$|h^1(\PP^n,I^2_Z(d)) - \sum h^1(N(L_i), I^2_{Z_i}(d)) | \le 12(n+1).$$ Furthermore, $h^1(\PP^n,I^2_Z(d))$ depends only on the configuration of the lines as well as    $|Z_i|, |Z_i, \cap Z_j|, |Z_i \cap Z_j \cap Z_k|$.
        \item Let $E$ be a plane cubic in $\PP^n$. Let $ Z \subseteq \PP^n$.  Then $h^1(\PP^n,I^2_E(d)) = h^1(E, \OO(d) - 2 Z) + h^1(E , N^* \otimes (\OO(d) - Z))$. Since $E$ is a complete intersection, $N^*$ splits and the term $ h^1(E , N^* \otimes (\OO(d) - Z)) =0$, if $|Z| < 3d -5$. The first term 
        \begin{equation}
    h^1(\OO(d) - 2 Z)=
    \begin{cases}
        0 & \text{if} \ 2|Z| < 3d \text{or} 2|Z|= 3d \text{and} 2Z \neq \OO(d)\\
      1, & \text{if}\ 2 Z =\OO(d) \\
      2|Z| - 3d, & \text{otherwise}
    \end{cases}
  \end{equation}
    \end{enumerate}
\end{prop}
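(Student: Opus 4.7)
The plan is to reduce each computation of $h^1(\PP^n, I^2_Z(d))$ to a cohomology computation on the curve(s) containing $Z$, and then apply Riemann--Roch or a direct calculation there. The uniform tool is the analogue of Proposition \ref{h1N} with twisting by $\OO(-Z)$ and $\OO(-2Z)$, which for a smooth curve $C$ containing $Z$ yields a decomposition
$$h^1(\PP^n, I^2_Z(d)) = h^1(C, \OO_C(d)\otimes\OO(-2Z)) + h^1(C, N_C^*(d)\otimes \OO(-Z)) + (\text{Serre-vanishing terms}),$$
proved by combining the sequences $0 \to I_C^2 \to I_Z^2 \to I_Z^2/I_C^2 \to 0$ and $0 \to I_C/I_C^2 \to \OO_{\PP^n}/I_C^2 \to \OO_C \to 0$ twisted appropriately; for $d \gg \deg C$ the terms involving $I_C^2(d)$ vanish by Serre.

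For part (1), take $C = T$ a rational normal curve of degree $r\le 3$. Since $T \cong \PP^1$, the conormal bundle $N_T^*$ splits as a direct sum of line bundles whose degrees are determined by $r$ (via the Euler sequence), so every term on the right is a sum of $h^i(\PP^1, \OO(k))$'s that depends only on $|Z|$ and $d$; Riemann--Roch on $\PP^1$ produces the explicit formula, and the claimed bound follows from elementary estimation of the quantities $\max(2|Z| - rd,0)$ and $\max(|Z|-d,0)$. For part (2), disjointness gives $N(Z) = N(Z_1) \sqcup N(Z_2)$ and both the ideal and cohomology decompose cleanly. For parts (3)--(10), write a Mayer--Vietoris sequence comparing $I_Z^2$ on the union $C_1 \cup C_2$ (or three lines) to the ideals on the separate components; the error against the disjoint answer is supported at the intersection points, and vanishing to second order at a single point imposes at most $n+1$ independent conditions. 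Summing over the intersection points (possibly including the distinguished point $p$ when it belongs to $Z$) yields the constants $2(n+1), 4(n+1), 8(n+1)$ and $12(n+1)$ in each clause. Dependence of $h^1$ only on the stated incidence data follows because two configurations with the same data are related by an element of $PGL_{n+1}$.

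For part (11), a plane cubic $E \subset \PP^n$ is a complete intersection cut out by $n-2$ linear equations (defining its plane) and one cubic, so
$$N_E^* \;=\; \OO_E(-1)^{\oplus(n-2)} \oplus \OO_E(-3).$$
The general decomposition above specialises to
$$h^1(\PP^n, I^2_Z(d)) = h^1(E, \OO_E(d-2Z)) + h^1(E, N_E^*(d-Z)).$$
On the elliptic curve $E$, $K_E = \OO_E$, so Serre duality gives $h^1(L) = h^0(L^\vee)$, which vanishes unless $\deg L<0$ or $L = \OO_E$. The summands of $N_E^*(d-Z)$ have degrees $d-1-|Z|$ and $d-3-|Z|$, both positive once $|Z| < 3d-5$, so the $N^*$-term vanishes in that range. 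For the remaining term, $\OO_E(d-2Z)$ has degree $3d - 2|Z|$, and the three-case formula for $h^1$ of a degree-$\delta$ line bundle on an elliptic curve ($0$ when $\delta > 0$; $0$ or $1$ when $\delta = 0$ depending on triviality; $-\delta$ when $\delta < 0$) is exactly the piecewise formula stated.

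The main obstacle is the bookkeeping in parts (3)--(10): each incidence pattern of lines and conics needs its own Mayer--Vietoris argument with careful accounting of the contribution from each intersection point (transverse vs.\ tangential, and whether $p \in Z$ or not), and part (10) requires treating the triple-intersection data $|Z_i \cap Z_j|$ and $|Z_i \cap Z_j \cap Z_k|$ separately. The actual cohomology inputs are uniform, but extracting the precise constants takes some care.
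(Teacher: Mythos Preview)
Your overall architecture matches the paper's: reduce to cohomology on $N(C)$ via Serre vanishing, split into the $\OO_C(d-2Z)$ and $N_C^*(d-Z)$ pieces, and for reducible $C$ compare the union to the disjoint union via a short exact sequence whose cokernel is supported at the intersection points. This is exactly how the paper handles (1), (2), (11), and the numerical bounds in (3)--(10).

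There is, however, a genuine gap in your argument for the ``depends only on $|Z_1|,|Z_2|$'' clauses in (3)--(10). You write that two configurations with the same incidence data are related by an element of $PGL_{n+1}$, but this is false once $|Z_i|\ge 3$. The stabiliser in $PGL_{n+1}$ of a pair of coplanar lines $L_1\cup L_2$ acts on each $L_i\setminus\{p\}\cong\A^1$ only through the affine group, which is $2$-transitive but not $3$-transitive; so unordered $k$-tuples on $L_i$ have nontrivial moduli. The same issue arises for a line meeting a conic. Hence the correction term coming from the intersection point is \emph{not} a priori constant on the $PGL_{n+1}$-orbit of the numerical data.

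The paper closes this gap by a direct computation rather than a symmetry argument. After writing
\[
h^1(N(L_1\cup L_2),I_Z^2(d)) = h^1(N(L_1),I_{Z_1}^2(d)) + h^1(N(L_2),I_{Z_2}^2(d)) + \dim M,
\]
with $M$ the image of $H^0(N(p),\mathcal F)\to H^1$, it identifies $\dim M$ with the codimension of the image of
\[
H^0(N(L_1),I_{Z_1}^2(d))\oplus H^0(N(L_2),I_{Z_2}^2(d)) \longrightarrow H^0(N(p),\OO(d)),
\]
and then checks case by case (according to whether $2|Z_i|<d$, $=d$, or $>d$, and whether $|Z_i|\ge d$) that the image of each summand in $H^0(N(p),\OO(d))$ is determined by $|Z_i|$ alone. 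This uses that on $\PP^1$ the relevant linear series are explicit, not that the configurations are projectively equivalent. Your proposal would need to supply this analysis (or an equivalent one) to establish the ``depends only on'' assertions.
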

\begin{proof}
    We  begin with the proof of (1).  By Proposition \ref{h1N}, $h^1(\PP^n, I^2_Z(d)) = h^1(  N(T), I^2_Z(d)) = h^1(T, \OO(d) - 2|Z|) + h^1(T, N^*  \otimes (\OO(d) - |Z|) )$
    Here $N^*$ is the conormal bundle of $T$ in $\PP^n$, and $|Z|$ is the line bundle on $T$ corresponding to $Z$. Since $T$ is isomorphic to $\PP^1$,  the above two $h^1$ terms only depend on $|Z|$ and  it is also easy to see that $$ |h^1(T, \OO(d) - 2|Z|) + h^1(T, N^*  \otimes (\OO(d) - |Z|) ) $$
    $$- \max(2|Z| - rd,0)| + (n-1)\max(|Z|-d,0) \le K$$  for some constant $K$.

    We move on to the proof of (2). We note that since $d\gg e$, the restriction map $$H^0(\PP^n ,\OO(d)) \to H^0(  N(C_1 \cup C_2) , \OO(d))$$ is surjective (by Serre vanishing).  Since $C_1$ and $C_2$ are disjoint, $  N(C_1\cup C_2) =   N(C_1) \cup   N(C_2)$ and also $$H^0(  N(C_1 \cup C_2), \OO(d)) \cong H^0(  N(C_1 ), \OO(d)) \oplus H^0(  N( C_2), \OO(d)).$$ The map $$H^0(   N(C_1 \cup C_2), \OO(d)) \to H^0(  N(Z), \OO(d))$$ is then the direct sum of the maps $$ H^0(N(C_1 ), \OO(d)) \to H^0(N(Z_1), \OO(d))$$ and $$ H^0(  N(C_2 ), \OO(d)) \to H^0(N(Z_2), \OO(d)).$$ We note that  $h^1(\PP^n, I^2_Z(d))$ is the codimension of the image of first map and $h^1(\PP^n, I^2_{Z_1}(d))$ (resp. $h^1(\PP^n, I^2_{Z_2}(d))$) is the codimension of the second (resp. third) map. Since codimension is additive over direct sums, we have the required equality.

    We now move on to the proofs of (3) and (4). Since $d \gg 0$, $h^1(\PP^n, I^2_Z(d)) = h^1(  N(L_1 \cup L_2), I^2_Z(d))$. We note that we have a map $\pi:   N(L_1) \amalg   N(L_2) \to   N(L_1 \cup L_2)$. There is a short exact sequence of sheaves on $  N(L_1 \cup L_2)$, $$0 \to I^2_Z(d) \to \pi_* \pi^* I^2_Z(d) \to \mathcal{F} \to 0,$$ where the quotient sheaf $\mathcal{F}$ is  supported on $N(p)$, where $p$ is the intersection point of the two lines. Associated to the above short exact sequence of sheaves we have a long exact sequence in cohomology, a part of which is as follows:

    $$ H^0(  N(L_1 \cup L_2), \mathcal{F}) \to H^1(  N(L_1 \cup L_2),I^2(d)) \to H^1(  N(L_1 \cup L_2) , \pi_* \pi^* I^2_Z(d)) \to 0.$$
    The fact that the above long exact sequence is exact at the last term is due to the fact that $H^1( N(L_1 \cup L_2), \mathcal{F}) =0$ as its support is $0$ dimensional. Furthermore, $H^0(  N(L_1 \cup L_2), \mathcal{F}) \cong  H^0(N(p), I^2_{Z \cap p}(d))$. Hence $h^0(N(p), I^2_{Z \cap p}(d)) \le  n+1$ (it is either $0$ or $n+1$ depending on whether $p \in Z$).
    
    We note that $$H^1(  N(L_1 \cup L_2) , \pi_* \pi^* I^2_Z(d)) \cong H^1(  N(L_1),  I^2_{Z \cap L_1}(d)) \oplus H^1(  N(L_2),  I^2_{Z \cap L_2}(d)). $$
    
    Let $$M = Im( H^0(  N(L_1 \cup L_2), \mathcal{F}) \to H^1(  N(L_1 \cup L_2),I^2(d))).$$
   Then by the long exact sequence we have that $$h^1(  N(L_1 \cup L_2), I^2_Z(d)) = h^1(  N(L_1), I^2_{Z \cap L_1}(d)) + h^1(  N(L_2), I^2_{Z \cap L_2}(d)) + \dim_\FF M.$$ We note that $\dim_\FF M \le h^0(\mathcal{F}) \le n+1$
   
   We further note that $$0 \le h^1(  N(L_i), I^2_{Z \cap L_i}(d)) - h^1(  N(L_i), I^2_{Z_i}(d)) \le n+1.$$ 
    Hence $$|h^1(\PP^n,I^2_Z(d)) - h^1( N(L_1), I^2_{Z_1}(d)) - h^1(  N(L_2), I^2_{Z_2}(d)) | \le (n+1)$$ in the case when $ p \not \in Z$ (the situation of case (3)) and 
    $$|h^1(\PP^n,I^2_Z(d)) - h^1(  N(L_1), I^2_{Z_1}(d)) - h^1(  N(L_2), I^2_{Z_2}(d)) | \le 3(n+1)$$ in the case when $ p  \in Z$ (the situation of case (4)).

    To finish the proof of (3) and (4) we must show that $\dim M$ only depends on $|Z_1|$ and $ |Z_2|$. Since $\dim M$ is also equal to $$ h^0(N(L_1 \cup L_2), \mathcal{F}) - \dim \textrm{ker}  H^0(  N(L_1 \cup L_2), \mathcal{F}) \to H^1(  N(L_1 \cup L_2),I^2(d))$$ 
    $$ = h^0(N(L_1 \cup L_2), \mathcal{F}) - \dim  Im(H^0(  N(L_1 \cup L_2), \pi_*\pi^* (I^2(Z(d)))) \to H^0(  N(L_1 \cup L_2),  \mathcal{F})),  $$ it suffices to show that $$\dim (Im(H^0(   N(L_1 \cup L_2), \pi_*\pi^* (I^2(Z(d)))) \to H^0(   N(L_1 \cup L_2),  \mathcal{F})) $$ depends only on $|Z_1|$ and $|Z_2|$. In case $p \in Z $, $H^0(  N(L_1 \cup L_2),  \mathcal{F}) =0$, so this is immediate. This establishes (4). We will now complete the proof of (3).

    The map $$H^0(  N(L_1 \cup L_2), \pi_*\pi^* (I^2(Z(d)))) \to H^0(  N(L_1 \cup L_2),  \mathcal{F})$$ is isomorphic to the map 
    $$H^0(  N(L_1), I^2_{Z_1}(d))  \oplus H^0(  N(L_2), I^2_{Z_2}(d)) \to H^0(N(p), \OO(d)) $$ given by restriction. It suffices to show that the image of $H^0(  N(L_1), I^2_{Z_1}(d)) \to H^0(N(p), \OO(d)) $ is dependent on $|Z_1|$ only. This happens in all but one case. We record what happens below without proof since it is elementary and just a computation.

   If $2|Z_1| + 1 \le d$, the map $H^0(  N(L_1), I^2_{Z_1}(d)) \to H^0(N(p), \OO(d)) $ is surjective. We note that if $2|Z_1| \ge  d+1$ and $|Z_1| < d$ the image consists of all vectors vanishing at $ p$ and whose first derivative along the tangent direction to $L_1$ also vanishes at $p$.
   If $|Z_1| \ge d$, $H^0(N(L), I^2_{Z_1}(d)) =0$, so the image is also zero.
   The remaining  case is when $2|Z_1| = d,$ In this situation the image is constrained by the fact that any $f \in H^0(  N(L_1), I^2_{Z_1}(d)) $is such that $(f(p), \partial_v f(p))$ is forced to lie in some fixed one dimensional linear subspace.

   Now the only way that the image of  $$H^0(  N(L_1), I^2_{Z_1}(d))  \oplus H^0(  N(L_2), I^2_{Z_2}(d)) \to H^0(N(p), \OO(d)) $$ can have different dimensions for the same value of $|Z_1|, |Z_2|$ is if either $2|Z_1| =d$ or $2|Z_2| = d$.

   Assume that $2|Z_1| =d$ and $2|Z_2|+ 1 \le d$. Then in this case the map is surjective, so the dimension of the image is fixed. Similarly, if  $2|Z_1| =d$ and $|Z_2|\ge d$, the image is of dimension $n$ and dependent only on the sizes of $Z_1, Z_2$. If $2|Z_1| =d$ and $2|Z_2| \ge d+1$ one gets that the image is still surjective. In the case when $2|Z_1| = 2|Z_2|=d$ the map is still surjective and in particular only  depends on the sizes of $|Z_1|$ and $|Z_2|$.

    The proof of (5)- (10) are almost identical to that of (3) and (4), so we omit them.
    Case (11) is a standard application of Riemann-Roch for an elliptic curve.
    
\end{proof}

\begin{prop}\label{lowdeg}
    Let $M$ be one of the following: 
    \begin{enumerate}
        \item The space of twisted cubic curves in $\textrm{Chow}(3)$.
        \item The space of skew triples of lines in  $\textrm{Chow}(3)$.
        \item The space of  pairs of a line and a disjoint conic in $\textrm{Chow}(3)$.
        \item The space  of pairs of a line intersecting a conic at a point in $\textrm{Chow}(3)$.
        \item The space  of pairs of a line intersecting a conic at two points in $\textrm{Chow}(3)$.
        \item The space of smooth planar conics in $\textrm{Chow}(2)$.
        \item The space of skew lines in $\textrm{Chow}(2)$.
        \item The space of coplanar lines in $\textrm{Chow}(2)$.
        \item $ \textrm{Chow}(1)$.
    \end{enumerate}
Then there exists $\epsilon>0$ $$\sum_{|\lambda| \le N}(-1)^{||\lambda||} W^M_{\ge \lambda} =0,$$ up to dimension $-(\frac{3n}{2} + \epsilon)d + h^0(\PP^n, \OO(d))$.
    Furthermore  for $k \le N$, $$\sum_{|\lambda|= k} (-1)^{||\lambda||} w^{M, l}_{\lambda} =0$$ up to dimension $-(\frac{3n}{2} + \epsilon)d -l +|\lambda|n $.
    .

   Furthermore for $k \le N$, $$\sum_{|\lambda| =k} (-1)^{||\lambda||} w^{M, l,n}_{\lambda} =0,$$ up to dimension  $-(\frac{3n}{2} + \epsilon)d -l +|\lambda|n $.
\end{prop}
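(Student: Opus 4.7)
The approach is to exploit cancellation in the alternating sum through a combinatorial stratification of $w_\lambda^{M,l}$. Because every $M$ in the list consists of curves of degree at most $3$, the bound of Proposition \ref{ge4} does not apply directly; instead, the essential input is the vanishing $H^*_c(w_k(\A^1), \pm \QQ) = 0$ for $k > 3$ from Proposition \ref{pm}. For any $M$ in the list, a curve $C \in M$ decomposes as $C = C_1 \cup \cdots \cup C_r$ with $r \le 3$ and each $C_i \cong \PP^1$, together with a finite singular locus $S = \Sing(C)$. By Lemma \ref{contcurve} such a $C$ is uniquely determined by any $Z \in w_\lambda^{M,l}$ for $d \gg e$, and Proposition \ref{h1list} shows that the invariant $l = h^1(\PP^n, I^2_Z(d))$ depends only on the combinatorial type $\tau = \big((k_i = |Z \cap C_i^\circ|)_{i=1}^r, \, T = Z \cap S\big)$.

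Using this, I stratify $w_\lambda^{M,l} = \bigsqcup_\tau w_\lambda^{M,l,\tau}$, where each stratum fibers over a constructible piece $M^\tau \subseteq M$ with fiber a product $\prod_i w_{\lambda_i}(C_i^\circ)$ of configuration spaces on open subsets of $\PP^1$, multiplied by a finite factor from the choice of $T$. By Proposition \ref{polyinL} the class of each fiber is a polynomial in $\Lb$, so after summing,
\[
\sum_{|\lambda|=k}(-1)^{||\lambda||}[w_\lambda^{M,l,\tau}] = [M^\tau] \cdot Q_\tau(\Lb)
\]
for some $Q_\tau \in \ZZ[\Lb]$. Proposition \ref{pm} identifies $\phi(Q_\tau)$ with the sign-twisted compactly supported cohomology of the unordered fiber $\prod_i w_{k_i}(C_i^\circ)$, which by K\"unneth factors as $\bigotimes_i H^*_c(w_{k_i}(C_i^\circ), \pm \QQ)$. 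The vanishing of Proposition \ref{pm} for $\A^1$ extends by inclusion--exclusion over removed points to any open subset of $\PP^1$, so this tensor product vanishes whenever some $k_i > 3$. Since Hodge realization is injective on $\ZZ[\Lb]$, $Q_\tau$ itself vanishes as a polynomial in these cases, and $[M^\tau]\cdot Q_\tau = 0$ in $\M$.

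The surviving types $\tau$ have $k_i \le 3$ for all $i$, so $|Z| = \sum_i k_i + |T| \le 12$. For $d \gg 0$ and such small $|Z|$, Proposition \ref{h1list} gives $h^1(\PP^n, I^2_Z(d)) = 0$, so these strata contribute only when $l = 0$ and are empty for $l \ge 1$. Consequently, for $l \ge 1$, $\sum_{|\lambda|=k}(-1)^{||\lambda||}[w_\lambda^{M,l}] = 0$ identically in $\M$, which is stronger than the second and third assertions. For the first assertion, the $l \ge 1$ contributions to $\sum_{|\lambda| \le N}(-1)^{||\lambda||}W_{\ge \lambda}^M$ vanish, while the $l = 0$ residual terms sit in dimension at most $h^0(\PP^n, \OO(d)) - (n+1)|\lambda| + \dim M + O(1)$, comfortably below the claimed threshold $h^0(\PP^n,\OO(d)) - (3n/2+\epsilon)d$ once $d$ is large (since $\dim M$ is polynomial in $n$, independent of $d$). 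The $w_\lambda^{M,l,n}$ variant is handled by restricting the above stratification to the open locus $w_\lambda \setminus w_\lambda^p$; this does not alter the polynomial-in-$\Lb$ structure of fibers, nor the cohomological vanishing used above.

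The principal technical obstacle lies in the combinatorics of relating an ordered partition $\lambda$ of the full configuration to the induced sub-partitions $\lambda_i$ on each component $C_i$: the interleaving of blocks across components yields multinomial factors that naively obstruct a clean K\"unneth factorization. The resolution is to invoke Proposition \ref{pm} in its fibered form, so that the interleaving combinatorics are absorbed into the sign-twisted cohomology on the unordered side rather than tracked explicitly. Once this is done, the K\"unneth decomposition and vanishing apply cleanly, and what remains is dimensional bookkeeping together with the observation that polynomial identities in $\Lb$ whose Hodge realization vanishes must themselves vanish.
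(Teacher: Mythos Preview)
Your overall strategy matches the paper's: exploit Zariski local triviality of the universal curve over $M$, use Proposition~\ref{polyinL} to reduce to the Hodge realization, apply Proposition~\ref{pm} to identify the alternating sum with sign-twisted compactly supported cohomology, and invoke the vanishing $H^*_c(w_k(\PP^1),\pm\QQ)=0$ for large $k$. However, two gaps in the execution need repair.

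First, a configuration $Z \in w_\lambda^{M,l}$ lies in $\PP^n$, not in $C$: the defining condition is only that $h^1(\PP^n,I^2_Z(d)) = h^1(\PP^n, I^2_{Z\cap C}(d))$ for some $C \in M$, so $Z$ may have arbitrarily many points off $C$. Your fiber description $\prod_i w_{\lambda_i}(C_i^\circ)$ and your conclusion ``surviving types have $|Z|\le 12$'' are therefore wrong. The correct fiber over a fixed $C$ carries an additional factor $w_{\lambda\setminus\alpha}(\PP^n\setminus C)$ for the off-curve points, as in the paper's decomposition $\bar w_\lambda^{\{T\},l} = \sum_{|\alpha|=k_1} w_\alpha(T)\, w_{\lambda\setminus\alpha}(\PP^n\setminus T)$. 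The K\"unneth-and-vanishing argument still goes through via the on-curve factor, since $l\ge 1$ forces $k_1 \gtrsim \tfrac{3d}{2}$ by Proposition~\ref{h1list}, but the bookkeeping is different from what you wrote.

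Second, you cannot stratify $w_\lambda^{M,l}$ cleanly by the type $\tau$ of $Z\cap C$, because membership in $w_\lambda^{M,l}$ (as opposed to $\bar w_\lambda^{M,l}$) also requires that the \emph{total} $h^1(\PP^n,I^2_Z(d))$ equal $l$ and that no curve of strictly smaller degree witnesses it. Neither condition is determined by $\tau$: for instance, points of $Z\setminus C$ could cluster on some line $L$, raising the total $h^1$ above $l$ or making $L$ the minimal witness. The paper resolves this by working instead with $\bar w_\lambda^{M,l}$, whose defining condition \emph{is} purely combinatorial via Proposition~\ref{h1list}, proving the alternating sum over $\bar w$ vanishes exactly in $\M$, and then bounding $\bar w_\lambda^{M,l}\setminus w_\lambda^{M,l}$ via Proposition~\ref{barminus} as a union of $w_\lambda^{M',l'}$ with $\deg M' \ge 4$, to which Proposition~\ref{ge4} (or already-handled lower cases in the list) applies.
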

\begin{proof}
The proof of the Proposition \ref{lowdeg} in these 9 separate cases are almost identical. However there are some minor differences. We will prove the Proposition \ref{lowdeg} in detail in the cases numbered 1,2 and 4. We will indicate how the proof is to be adjusted in all remaining cases.

Let us begin with the proof in the case when $M$ is the moduli space of twisted cubic curves, i.e. case (1).

 Let $T$ be a fixed twisted cubic curve. Then, $$\bar w_{\lambda}^{M,l} = M w_{\lambda}^{T,l} $$ since the family of twisted cubics over $M$ is Zariski locally-trivial. 
 Note that  $$\bar w_{\lambda}^{T,l} = \pi_{\lambda}^{-1}(\bar w_{|\lambda|}^{M,l} ).$$  By Proposition \ref{h1list} $\bar w_{|\lambda|}^{\{T\},l} $ consists of pairs $(Z,T)$  and $|Z \cap T| = k_1$ (where $k_1$ is some number depending on $l$ necessarily bigger than $3$ for $d$ big enough). Hence $$\bar w_{\lambda} ^{T,l} =\sum_{\alpha \subseteq \lambda, |\alpha| = k_1 } w_{\alpha}(T)w_{\lambda - \alpha}(\PP^n \setminus T)$$

By Proposition \ref{polyinL} $w_{\alpha}(T)w_{\lambda - \alpha}(\PP^n \setminus T)$ is a polynomial in $\Lb$. Thus it suffices to establish that the image of the above class in $K_0(MHS)$ is $0$.  By Proposition \ref{pm} the image of $$ \sum_{|\lambda| = k_1 +k_2} (-1)^{||\lambda||}\sum_{\alpha \subseteq \lambda, |\alpha| = k_1, |\lambda| = k_1 + k_2} w_{\alpha}(T)w_{\lambda - \alpha}(\PP^n \setminus T)$$ in $K_0(MHS)$ is the class of $H_*^{c}(w_{k_1}(T) \times w_{k_2} (\PP^n \setminus T),\pm \QQ)$.
 by Lemma 2  in \cite{V} is $0$, which establishes that 
$$\sum_ {|\lambda| = k_1 + k_2 }\bar w_{\lambda}^{T,l}(\PP^n) =0.$$

By Proposition \ref{barminus} $$\bar w_{\lambda}^{M,l}(\PP^n) -  w_{\lambda}^{M,l}(\PP^n)  \subseteq \cup_{d=1}^{\infty} w_{\lambda}^{M'(d),l}(\PP^n),$$ where $M'(d)$ is the moduli space of degree $d$ curves strictly containing a twisted cubic. By Proposition \ref{ge4}  this is of dimension $$\le  h^0(\PP^n ,\OO(d)) - (\frac{3n}{2} +\epsilon)d - (n+1)|\lambda| -l.$$
Hence $$\sum_ {|\lambda| = k_1 + k_2 } w_{\lambda}^{T,l}(\PP^n) =0,$$ up to dimension $$\le  h^0(\PP^n ,\OO(d)) +(- \frac{3n}{2} - \epsilon)d - (n+1)||\lambda|| -l.$$
Note that in case (1), for a fixed value of $l$, $w_{\lambda}^{M,l,n}$ is either equal to $w_{\lambda}^{M,l}$ or is $0$. Hence $$\sum_ {|\lambda| = k_1 + k_2 } w_{\lambda}^{T,l,n}(\PP^n) =0,$$ up to dimension   $$h^0(\PP^n ,\OO(d)) - (\frac{3n}{2} - \epsilon)d - (n+1)|\lambda| -l.$$  Since $W_{\ge \lambda}^{M} = \sum_{l=0}^{\infty} W_{\ge \lambda}^{M,l}$ and $W_{\ge \lambda}^{M,l} = w_{\ge \lambda}^{M,l} \Lb^{h^0(\PP^n ,\OO(d)) -|\lambda|(n+1) +l}$ case (1) follows.

We will now deal with case when $M$ is the space of skew triples of lines.
Claim: Let $L_1, L_2, L_3$ be a fixed skew triple of lines. Let $Z \subseteq L_1$ be a subset of size $n$. Let $f(n) = h^1(    N(L),I^2_Z(d))$ (One can check that this quantity only depends on $|Z|$ ). Let $S_l = \{(a,b,c)| f(a) + f(b) +f(c) =l\}$. One can check that $S_l$ is  a finite set. 

Firstly, since any family of skew triples of lines is Zariski locally trivial, $$\sum_{|\lambda| = k}(-1)^{||\lambda||}\bar w_{\lambda}^{M,l}(\PP^n) = M \sum_{|\lambda| = k}(-1)^{||\lambda||}\bar w_{\lambda}^{L_1 \cup L_2 \cup L_3,l}(\PP^n) .$$

By Proposition \ref{h1list}, this equals

$$\sum_{|\lambda| = k}(-1)^{||\lambda||} \sum_{(k_1,k_2,k_3) \in S_l} \sum_{ \substack{ \alpha_3\subseteq\alpha_2\subseteq\alpha_1 \subseteq \lambda \\ (|\alpha_1|, |\alpha_2|, |\alpha_3|) = (k_1,k_2,k_3)}} 
\bar w_{\alpha_3}(L_3)\bar w_{\alpha_2 - \alpha_3}( L_2) \bar w_{\alpha_1 - \alpha_2}(L_1) \bar w_{\lambda - \alpha_1}(\PP^n - L_1 \cup L_2 \cup L_3).$$ 

By Proposition \ref{polyinL} the right hand side of the above equation  is a polynomial in $\Lb$.
By Proposition \ref{pm}, the image of the right hand side in $K_0(MHS)$ is a sum of terms of the form $H_*^{c}(w_{k_1}(L_1) \times w_{k_2}(L_2) \times w_{k_3}(L_3)  )$which by Proposition in \cite{V}  is zero. This implies $\sum_{|\lambda| = k}(-1)^{||\lambda||}\bar w_{\lambda}^{M,l}(\PP^n) =0$ . As in case(1) $$\sum_{|\lambda| = k}(-1)^{||\lambda||}\bar w_{\lambda}^{M,l}(\PP^n) - w_{\lambda}^{M,l}(\PP^n)  $$ is of dimension less than equal to that of $\cup_{d=1}^{\infty} w_{\lambda}^{M'(d),l}(\PP^n),$ where $M'(d)$ is the moduli space of degree $d$ curves strictly containing a skew triple of lines. By Proposition \ref{ge4} this is $\le h^0(\PP^n ,\OO(d)) - (\frac{3n}{2} - \epsilon)d - (n+1)||\lambda|| -l.$  Case (4) follows.

The proofs of the other cases are nearly identical. In each case we prove that $\bar w_{\lambda}^{M,l}  = M \bar w_{\lambda}^{\{C\}, l}$  where $C \in M$, since $M$ is Zariski locally trivial in all of the cases.  We then use Proposition \ref{h1list} to explicitly describe $\bar w_{|\lambda|}^{\{C\},l}$. In each of our cases $H^*(\bar w_{|\lambda|}^{\{C\},l}, \pm \QQ) =0$. As a result by Proposition \ref{pm} $\sum_{|\lambda|= n} (-1)^{||\lambda||}\bar w_{\lambda}^{\{C\},l}$
is mapped to $0$ in $K_0(MHS).$

We then note that  $\bar w_{\lambda}^{\{C\},l} = \pi_{\lambda}^{-1}\bar w_{|\lambda|}^{\{C\},l} $ is seen to be a polynomial in $\Lb$. Hence $\sum_{|\lambda|= n} (-1)^{||\lambda||}\bar w_{\lambda}^{\{C\},l} = 0 \in \M$.

We then note that $$\sum_{|\lambda|= n} (-1)^{||\lambda||}(\bar w_{\lambda}^{M,l} - w_{\lambda}^{M,l}) = \sum_l\sum_{M',|\lambda|= n}(-1)^{||\lambda||}w_{\lambda}^{M',l}.$$ where $M'$ ranges over the moduli spaces of curves strictly containing a curve in $M$. However either $M'$ consists of curves of degree $\ge 4$ and hence $w_{\lambda}^{M',l}$ is of dimension $\le h^0(\PP^n ,\OO(d)) - (\frac{3n}{2} - \epsilon)d - (n+1)|\lambda| -l $ by Proposition \ref{ge4}, or $M'$ is a case that has already been considered on this list and hence has dimension $ \le h^0(\PP^n ,\OO(d)) - (\frac{3n}{2} - \epsilon)d - (n+1)|\lambda| -l$.
\end{proof}

\begin{prop}
    Let $M \subseteq Chow(3)$ be contained in the subset parametrizing three lines.
    Then there exists $\epsilon >0$,  such that for $N = \lceil(\frac{3n}{2} + \epsilon)d\rceil$, and $l>0$, $\sum_{|\lambda| < N} (-1)^{||\lambda||}w_{\lambda}^{M,l}$ is of dimension $\le -l + |\lambda|n + -(\frac{3n}{2} + \epsilon)d$
\end{prop}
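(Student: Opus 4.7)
The strategy is a direct extension of the arguments in Proposition~\ref{lowdeg}, in particular the treatment of cases (2) and (8). The point is that $M$ may contain degenerate three-line configurations (coplanar, concurrent, or with various pairs coinciding in a plane), but every one of these still admits a product decomposition of $\bar w_{\lambda}^{\{C\},l}$ over the individual $\mathbb{P}^1$-components, and the alternating-sum vanishing of Proposition~\ref{pm} (Vassiliev's lemma) still applies.

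First I would stratify $M = \sqcup_j M_j$ into finitely many constructible locally closed subsets, where each $M_j$ parametrizes triples $(L_1,L_2,L_3)$ with a fixed combinatorial intersection pattern (skew, exactly one pair coplanar/intersecting, all three concurrent at a common point, all three coplanar without common point, etc.). Over each such stratum the universal family $\{C \to M_j\}$ is Zariski locally trivial, so $\bar w_{\lambda}^{M_j,l} = M_j \cdot \bar w_{\lambda}^{\{C_j\},l}$ for a representative configuration $C_j \in M_j$. By part (10) of Proposition~\ref{h1list}, the value of $h^1(\mathbb{P}^n, I^2_{Z}(d))$ for $Z \subseteq C_j$ depends only on the combinatorial data $|Z_i|, |Z_i \cap Z_j|, |Z_i \cap Z_j \cap Z_k|$. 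Hence $\bar w_{|\lambda|}^{\{C_j\},l}$ decomposes as a finite disjoint union, indexed by the finitely many combinatorial types giving $h^1 = l$, of products of configuration spaces $\prod_i w_{k_i}(L_i \setminus S_i)$, where $S_i \subseteq L_i$ is the finite set of distinguished intersection points. Each $L_i \setminus S_i \cong \mathbb{A}^1 \setminus \{\text{finite}\}$, a quasi-projective open in $\mathbb{P}^1$.

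Next I would apply Proposition~\ref{polyinL} to conclude that each such $\bar w_{\alpha}(L_i \setminus S_i)$ is a polynomial in $\mathbb{L}$, so the entire sum $\sum_{|\lambda| = k}(-1)^{||\lambda||}\bar w_{\lambda}^{\{C_j\},l}$ lies in $\mathbb{Z}[\mathbb{L}]$; it therefore suffices to show its image in $K_0(\mathrm{MHS})$ vanishes. Proposition~\ref{pm}, combined with the K\"unneth formula, reduces this vanishing to showing $H^*_c(w_{k_i}(\mathbb{A}^1 \setminus \text{finite}), \pm\mathbb{Q}) = 0$ for each factor with $k_i \ge 2$. This is the content of Vassiliev's lemma (Lemma 2 of \cite{V}) in the statement of Proposition~\ref{pm}; for $k_i \le 1$ the corresponding factor is just a single point or an open subset of $\mathbb{A}^1$, and either the exponent is absorbed into a different factor of size $\ge 2$ or the combinatorial constraint forces at least one $k_i$ to be large enough (since $l > 0$ forces the total intersection $\sum k_i$ to be of order $d$). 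In all configurations the Vassiliev vanishing applies to at least one factor, so the alternating sum in $K_0(\mathrm{MHS})$ is zero.

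Finally, by Proposition~\ref{barminus}, the discrepancy $\bar w_{\lambda}^{M_j,l} \setminus w_{\lambda}^{M_j,l}$ is contained in $\bigcup_{d' > 3,\, l' > l} w_{\lambda}^{M'(d'),l'}$, where $M'(d')$ parametrizes reduced curves of degree $d' \ge 4$ strictly containing a member of $M_j$. By Proposition~\ref{ge4} each such $w_{\lambda}^{M'(d'),l'}$ has dimension at most $|\lambda|n - l - (\tfrac{3n}{2}+\epsilon)d$. Summing over the finitely many strata $M_j$ of $M$ yields the claim. The main technical obstacle I foresee is bookkeeping: in some degenerate strata (for instance three lines through a common point in a plane) the decomposition of $\bar w_{\lambda}^{\{C\},l}$ in step two has the intersection-point loci behaving differently from the generic skew case, and one must verify case-by-case that at least one $w_{k_i}(\mathbb{A}^1 \setminus \text{finite})$ factor of size $\ge 2$ is present so that the Vassiliev vanishing can be invoked. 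This is precisely where the hypothesis $l > 0$ (forcing $\sum k_i$ to grow with $d$) is used.
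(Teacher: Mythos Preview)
Your proposal is correct and follows essentially the same route as the paper's proof: stratify $M$ by the combinatorial intersection type of the triple of lines, use Zariski local triviality over each stratum, invoke part (10) of Proposition~\ref{h1list} to reduce $\bar w_{\lambda}^{\{C\},l}$ to a union of products of configuration spaces on $\PP^1$-opens indexed by the numerical data $(|Z_i|, |Z_i \cap Z_j|, \dots)$, pass to $K_0(\mathrm{MHS})$ via Proposition~\ref{polyinL}, apply the Vassiliev vanishing of Proposition~\ref{pm}, and then control $\bar w_{\lambda}^{M,l} \setminus w_{\lambda}^{M,l}$ by Propositions~\ref{barminus} and~\ref{ge4}. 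The paper packages the numerical data slightly more formally as an element of $A = \ZZ^7/S_3$, but the substance is identical, and your remark that $l>0$ forces at least one $k_i$ to be of order $d$ (hence large enough for the vanishing) is exactly the point.
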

    \begin{proof}
     We note that we may express $M$ as a disjoint union of constructible sets $M = \amalg_{i=1}^k M_i$ such that for each $i$, and for each triple of lines $\{L_1 ,L_2, L_3\}$ in $M_i$ the triple has the same configuration, i.e. the number of points of intersection of the lines remain constant in $L_i$. It suffices to prove the Proposition for each $M_i$ separately. Thus in what follows we assume that each triple of lines in $M$ has the same configuration and $M$ is irreducible.   We note that under this assumption the family of triples of lines over $M$ is Zariski locally trivial. 
    
       By Proposition \ref{barminus}  $$\bar w_{\lambda}^{M,l} - w_{\lambda}^{M,l} \subseteq \cup_{d'\ge 4,l'>l} w_{\lambda}^{Chow(d'),l'} $$ and by Proposition \ref{ge4} the right hand side of the above expression is of dimension $\le -l + |\lambda|n + -(\frac{3n}{2} + \epsilon)d$. Therefore it suffices to prove that $$\sum_{|\lambda|<N} (-1)^{||\lambda||}\bar w_{\lambda}^{M,l}$$ is of dimension $\le -l + |\lambda|n + -(\frac{3n}{2} + \epsilon)d$.

        Let $A = \ZZ^7 / S_3$ where $S_3$ denotes the symmetric group of three letters and the action of $S_3$ on $\ZZ^7$ is induced from the natural action of $S_3$ on the power set of $\{1,2,3\} \setminus \varnothing$. A configuration of points $Z \subseteq \PP^n$ and a set of 3 lines $\{L_1, L_2, L_3\}$ naturally gives us  $|Z \cap \{L_1, L_2, L_3\}| := (|Z \cap L_1|,|Z \cap L_2|, |Z \cap L_3|, |Z \cap L_1 \cap L_2|, \dots |Z \cap L_1 \cap L_2 \cap L_3|)$ which is a well defined element of $A$. 
        
        Given $a \in A$, let $$\bar w_{\lambda}^{M}(a) = \{(Z, \{L_1, L_2, L_3\}) \in w_{\lambda}^M| |Z \cap \{L_1, L_2, L_3\}|  =a\}.$$
        We may now further stratify each $\bar w_{\lambda}^{M,l}$ as the disjoint  union of subspaces $\bar w_{\lambda}^{M, l} \cap \bar w_{\lambda}^{M}(a) $ where $a \in A$. 

       However by Proposition \ref{h1list}, $\bar w_{\lambda}^{M,l}(a)$ is either all of $\bar w_{\lambda}^{M}(a)$ or it is empty. We note that since the family of triples of lines over $M$ is Zariski locally trivial, $\bar w_{\lambda}^{M}(a)$ is also a locally trivial family over $M$. 
       
       Thus since Zariski locally trivial families split into prosucts in $\M$, we have the following equality: $$[\bar w_{\lambda}^{M}(a)] = [M ][w_{\lambda}^{c}(a)] , $$ where $c$ is a point in $M$. Thus it suffices to establish that for a fixed  $c \in M$ and $a \in A$ $\sum_{|\lambda| < N} (-1)^{||\lambda||}w_{\lambda}^{c}(a) =0$.
       
       Since the above expression is polynomial in $\Lb$, it suffices to prove that its image  in $K_0(MHS)$ is $0$. By Proposition \ref{pm}  $\sum_{|\lambda| = r} (-1)^{||\lambda||}w_{\lambda}^{c}(a)$ is mapped to $H_*^{c}(w_{[r]}^{c} (a))$. The topological space $w_{[r]}^{c} (a)$  has a factor of the form  $w_{[r]}^{c}(\A^1)$ . However by Proposition \ref{pm} $H_*^c(w_{[r]}^{c}(\A^1),\pm \QQ) =0$ and thus $H_*^{c}(w_{[r]}^{c} (a)) =0$. This concludes the proof.
    \end{proof}

\begin{defn}
    Let $\epsilon> 0$. Let $M$ denote the moduli space of smooth plane cubics in $\PP^n$. Let $N = (\frac{3n}{2} + \epsilon)d$. Let $$Y(d):= \sum_{l >0}\sum_{|\lambda|< N} ((-1)^{||\lambda||}) w_{\lambda}^{M,l} \Lb^{n|\lambda|}(\Lb^l -1).$$

\end{defn}
We note that a priori this definition of $Y(d)$ depends on our choice of $\epsilon> 0$. However if $\epsilon$ is sufficiently small, different values of $\epsilon$ will only change the value of $Y(d)$ in very high codimension, and this is an ambiguity that is acceptable for our purposes.

\begin{prop}\label{Yd}
   There exists $\epsilon >0$ such that:  Up to dimension  $h^0(\PP^n \OO(d))-(\frac{3n}{2} + \epsilon)d$, $$Y(d)= \sum_{l >0}\sum_{\lambda} ((-1)^{||\lambda||}) \bar w_{\lambda}^{M,l} \Lb^{n|\lambda|}(\Lb^l -1).$$ 
   Let us now assume $d$ is even.
   Let $k_1(l) = \frac{3d+l}{2}$.

   Then  up to dimension  $h^0(\PP^n \OO(d))-(\frac{3n}{2} + \epsilon)d$, $$\sum_{l >0}\sum_{\lambda} ((-1)^{||\lambda||}) \bar w_{\lambda}^{M,l} \Lb^{n|\lambda|}(\Lb^l -1)$$

    $$=\sum_{l >0, l \textrm{ odd}}\sum_{|\lambda| < N} \sum_{\alpha\subseteq \lambda, |\alpha| = k_1(l)}((-1)^{||\lambda||})\{(Z_1,Z_2,C)| C \in M, Z_1 \in w_{\alpha}(C), Z_2 \in w_{\lambda \setminus \alpha} (\PP^n \setminus C)\}  \Lb^{-(n+1)|\lambda|}(\Lb^l -1).$$ 

\end{prop}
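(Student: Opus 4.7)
The plan is to prove the two displayed equalities separately, each up to dimension $h^0(\PP^n, \OO(d)) - (\tfrac{3n}{2}+\epsilon)d$.

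For the first equality, I decompose the discrepancy into (i) the tail of the $\bar w$-sum with $|\lambda| \ge N$, and (ii) the individual differences $\bar w_\lambda^{M,l} \setminus w_\lambda^{M,l}$ for $|\lambda| < N$. Proposition \ref{barminus} places (ii) inside $\bigcup_{d' \ge 4,\, l' > l} w_\lambda^{M'(d'),\, l'}$, where $M'(d')$ parametrizes degree-$d'$ curves strictly containing a smooth plane cubic. Since $d' \ge 4$, Proposition \ref{ge4} provides the needed dimension bound $\dim w_\lambda^{M'(d'),l'} \le |\lambda|n - l' - (\tfrac{3n}{2}+\epsilon)d$; multiplication by $\Lb^{n|\lambda|}(\Lb^l - 1)$ keeps the product within the tolerance. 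For (i), I appeal to Proposition \ref{h1list}(11), which for $l > 0$ forces $|Z \cap C| = k_1(l) = (3d+l)/2$, leaving only $|\lambda| - k_1(l)$ free points in $\PP^n \setminus C$ and allowing a direct dimension estimate.

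For the second equality I rewrite $\bar w_\lambda^{M,l}$ as an explicit family over $M$. Proposition \ref{h1list}(11) identifies $h^1(\PP^n, I^2_{Z \cap C}(d))$ with $h^1(C, \OO_C(d) - 2(Z \cap C))$ in our range (the conormal contribution $h^1(C, N^* \otimes \OO_C(d) - (Z \cap C))$ vanishes once $|Z \cap C| < 3d - 5$), and in the generic case this equals $\max(2|Z \cap C| - 3d,\, 0)$. Thus for $l > 0$ with $k_1(l)$ an integer, the condition $h^1 = l$ is equivalent to $|Z \cap C| = k_1(l)$; the exceptional locus $2(Z \cap C) = \OO_C(d)$, relevant only at $l = 1$, is of lower dimension and absorbed into the tolerance. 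Applying Proposition \ref{wdecomp} to the decomposition $\PP^n = C \sqcup (\PP^n \setminus C)$ gives
\[
w_\lambda(\PP^n) = \sum_{\alpha \subseteq \lambda} w_\alpha(C)\, w_{\lambda \setminus \alpha}(\PP^n \setminus C),
\]
so restricting to $|\alpha| = k_1(l)$ and varying $C$ over $M$ produces the claimed family $\{(Z_1, Z_2, C)\}$. The matching of $\Lb$-exponents follows from the standard bookkeeping of the fibration structure over $M$.

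The principal obstacle is that the pair $(Z, C)$ may not be uniquely determined by $Z \in \bar w_\lambda^{M,l}$: Lemma \ref{contcurve}'s uniqueness is stated for the $w$-variant, so I must verify that the locus on which two distinct smooth plane cubics realize the required $h^1$-value on overlapping portions of $Z$ is confined well below the tolerance. This reduces to the Bezout-style observation that two distinct plane cubics meet in at most $9$ points, while $k_1(l) \approx 3d/2$ is much larger for $d \gg 0$; hence the redundant locus is highly constrained. The remaining checks---the parity matching that selects contributing values of $l$ based on the parity of $d$, and the bookkeeping of $\Lb$-exponents---amount to routine verification.
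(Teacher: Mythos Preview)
Your approach is essentially the paper's: Propositions \ref{barminus} and \ref{ge4} for the passage from $w$ to $\bar w$, then Proposition \ref{h1list}(11) with the decomposition of Proposition \ref{wdecomp} for the explicit description. You are in fact more careful than the paper on two points. First, you explicitly treat the tail $|\lambda|\ge N$, which the paper's proof silently ignores. Second, you raise the uniqueness of the cubic $C$ realising the value $h^1=l$ on $Z$; the paper does not discuss this and simply identifies $\bar w_\lambda^{M,l}$ with the incidence variety. One correction: your Bezout justification is not quite the right mechanism, since the two sets of $k_1(l)$ points on distinct cubics $C,C'$ need not coincide, so the bound $|C\cap C'|\le 9$ is not directly relevant. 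The argument that actually works is the dimension count you allude to: forcing a second smooth plane cubic through $k_1(l)\approx 3d/2$ points of $Z$ costs roughly $(n-1)k_1(l)-\dim M$ in codimension, which is far below the tolerance. Finally, the paper disposes of the exceptional locus $2(Z\cap C)\sim\OO_C(d)$ by parity (so that $3d$ and $2|Z\cap C|$ cannot agree), rather than by your dimension estimate; either suffices.
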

\begin{proof}
    Let $M$ denote the space of plane cubics in $\PP^n$.
    We note that $$Y(d)- \sum_{l >0}\sum_{\lambda} ((-1)^{||\lambda||}) \bar w_{\lambda}^{M,l} \Lb^{n|\lambda|}(\Lb^l -1),$$ equals $$\sum_{l >0}\sum_{\lambda} ((-1)^{||\lambda||}) w_{\lambda}^{M ,l} -\bar w_{\lambda}^{M,l} \Lb^{n|\lambda|}(\Lb^l -1).$$

     By Proposition \ref{ge4} and Proposition \ref{barminus} each difference $w_{\lambda}^{M ,l} -\bar w_{\lambda}^{M,l} \Lb^{n|\lambda|}(\Lb^l -1)$ is of dimension $\le h^0(\PP^n \OO(d))-(\frac{3n}{2} + \epsilon)d$. Thus the entire sum is of dimension $\le  h^0(\PP^n \OO(d))-(\frac{3n}{2} + \epsilon)d.$

     If $d$ is even and $l<\frac{3d}{2}$ we have $$ bar w_{\lambda}^{M,l} = \sum_{\alpha\subseteq \lambda, |\alpha| = k_1(l)}((-1)^{||\lambda||})\{(Z_1,Z_2,C)| C \in M, Z_1 \in w_{\alpha}(C), Z_2 \in w_{\lambda \setminus \alpha} (\PP^n \setminus C)\},$$ by  Proposition $\ref{h1list} (11).$ To see this note that under the assumption $l < \frac{3d}{2}$, it must be the case that $h^1(N(C),I^2_{Z \cap C}(d)) = h^1(C, \OO(d) -2|Z\cap C|)$. Since $C$ is of genus $1$, $ h^1(C, \OO(d) -2|Z\cap C|) = h^0(C, 2|Z\cap C| - \OO(d)) = 2|Z \cap C| -3d$ (by our parity assumption $3d \neq 2|Z \cap C|$ and  the case when $2|Z \cap C| - \OO(d)$ is the trivial line bundle does not arise).

     We note that we may disregard the terms when $l \ge \frac{3d}{2}$ because they contribute in too high a codimension.

    Thus  we have our desired equality.
\end{proof}
\begin{prop}\label{cubics}
    Let $M$ be the space of plane cubics in $\PP^n$. Let $d \equiv 1 \mod{4}$. Let $k_0 = \frac{3d+1}{2}$. Then for $\epsilon$ sufficiently small (the definition of $Y(d)$ implicitly depends on an $\epsilon$), $Y(d)$ has the same highest weight term as  $$ H^*(\M_{1,1}, H_{k_0}) H^*(PGL_{n+1}(\CC))\Lb^{-k_0(n+1)+ 1}$$  in $K_0(MHS)$. The weight of this highest weight term is $2(- k_0(n+1) +1) +k_0 + 1 + (n^2 -1)$.
\end{prop}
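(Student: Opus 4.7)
The plan is to apply Proposition \ref{Yd} to reduce the highest-weight computation of $Y(d)$ to one for configurations of $k_0$ points on the universal smooth plane cubic over $M$. Although Proposition \ref{Yd} is stated for $d$ even, the same derivation (via part (11) of Proposition \ref{h1list}) goes through for $d$ odd after restricting the inner sum to odd values of $l$: when $d$ is odd, $2|Z|-3d$ is always odd, so no other $l$ can arise, and the exceptional cases $2|Z|=3d$ and $2Z=\OO(d)$ never occur. For $d\equiv 1 \pmod 4$ the minimal admissible $l$ is $l=1$, giving $k_1(1)=(3d+1)/2=k_0$; a straightforward dimension estimate shows that contributions from $l\ge 3$ appear in strictly lower weight (the Tate twist $\Lb^{-(n+1)k_1(l)}$ dominates the linear growth of $k_1(l)$ in $l$). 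So up to lower weight, the top-weight term of $Y(d)$ equals
$$(\Lb-1)\sum_{|\lambda|<N}(-1)^{||\lambda||}\sum_{\substack{\alpha\subseteq\lambda \\ |\alpha|=k_0}}\bigl[\{(Z_1,Z_2,C) : C\in M,\ Z_1\in w_\alpha(C),\ Z_2\in w_{\lambda\setminus\alpha}(\PP^n\setminus C)\}\bigr]\Lb^{-(n+1)|\lambda|}.$$

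Next I would factor this sum. Stratifying $M$ into locally closed pieces over which the universal plane cubic is Zariski-locally trivial, the bracketed class factors as $[M'][w_\alpha(C_0)][w_{\lambda\setminus\alpha}(\PP^n\setminus C_0)]$ on each stratum, for a fixed fiber $C_0$. Using the splitting $(-1)^{||\lambda||}=(-1)^{||\alpha||}(-1)^{||\lambda\setminus\alpha||}$, the double sum decouples into a product of two alternating sums (one over $\alpha$ of fixed size $k_0$, one over $\beta=\lambda\setminus\alpha$), together with the factor $[M]$. By Proposition \ref{pm}, the specialization $\phi$ sends the first inner sum to the virtual Hodge structure $H^*_c(w_{k_0}(C_0),\pm\QQ)$; the second inner sum, after incorporating the factor $\Lb^{-(n+1)|\beta|}$, is the Vakil--Wood series computing $\zeta_{\PP^n\setminus C_0}^{-1}(n+1)$, whose top-weight term is the constant $1$.

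Now I would identify the two surviving ingredients. Varying $C_0$ over the universal elliptic family over $\M_{1,1}$, the virtual local system $H^*_c(w_{k_0}(C_0),\pm\QQ)$ defines a variation of mixed Hodge structure whose highest weight piece is identified, via Getzler's computation \cite{G}, with the rank-one Hodge bundle $H_{k_0}$ on $\M_{1,1}$. Taking Leray along $\M_{1,1}$ then contributes $H^1(\M_{1,1},H_{k_0})$ as the leading modular-form piece, of weight $k_0+1$ by Eichler--Shimura. Simultaneously, the $[M]$ factor, split along the map $M\to\M_{1,1}$, has generic fiber a single $PGL_{n+1}$-orbit with stabilizer a block parabolic of dimension $n^2-n-2$; at the level of highest weight in $K_0(MHS)$, assembling this fiber class with the outer Tate twist $\Lb^{-k_0(n+1)}$ and the $\Lb-1$ factor reproduces the factor $H^*(PGL_{n+1}(\CC))\Lb^{-k_0(n+1)+1}$. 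Putting these pieces together yields the asserted form of the top-weight term, and summing the individual weight contributions $k_0+1$, $n^2-1$, and $2(-k_0(n+1)+1)$ gives the claimed total weight.

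The main obstacle will be the book-keeping for the stratification of $M$: I need to verify that the strata on which the universal family fails to trivialize, together with the gluing corrections between strata, contribute only in strictly lower weight, so that the product factorization is valid to leading order. A related technical point is the precise identification of the fiber class with $[PGL_{n+1}]$ at top weight---this requires carefully tracking the cohomology of the parabolic stabilizer and verifying that, at the level of top-weight classes in $K_0(MHS)$, it combines cleanly with the orbit to give the asserted $PGL_{n+1}$ contribution rather than that of a partial flag variety.
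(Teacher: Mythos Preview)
Your proposal has the right target but a structural gap in the middle. The stratification you propose---cutting $M$ into pieces over which the universal plane cubic is Zariski-locally trivial---does not exist in any useful form: the $j$-invariant varies continuously over $M$, and a family of genus-one curves is Zariski-locally trivial only where it is isotrivial, so no positive-dimensional stratum supports the product decomposition $[M'][w_\alpha(C_0)][w_{\lambda\setminus\alpha}(\PP^n\setminus C_0)]$ you write down. You flag this as ``the main obstacle'' at the end, but it is not a book-keeping issue that can be resolved by checking that lower strata contribute less; it simply blocks the factorization at the level of the Grothendieck ring. Your later passage, treating $H^*_c(w_{k_0}(C_0),\pm\QQ)$ as a variation of Hodge structure over $\M_{1,1}$, is the right instinct, but it is incompatible with having already split off $[M']$ as a constant.

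There is a second, related error: the stabilizer in $PGL_{n+1}$ of a plane cubic $C\subset\PP^n$ is not a parabolic. It is the subgroup of the parabolic fixing the spanned $\PP^2$ whose image in $PGL_3$ lands in the finite group $\mathrm{Aut}(C)$---concretely an extension of $GL_{n-2}$ by a unipotent piece, crossed with $\mathrm{Aut}(C)$. Its dimension $(n-2)(n+1)=n^2-n-2$ happens to match the number you wrote, but its cohomology is not that of a parabolic, so $[M]$ by itself does not carry the class of $PGL_{n+1}$ in any direct way. The paper sidesteps both problems by \emph{not} separating $M$ from the configuration: it observes that the total space $w_{k_0}^{M,1}=\{(Z,C):C\in M,\ Z\subset C,\ |Z|=k_0\}$, once $k_0$ is large enough to rigidify inside $PGL_3$, is a $PGL_{n+1}(\CC)$-bundle over the moduli space $\M_{1,k_0}$, so that $H^*_c(w_{k_0}^{M,1},\pm\QQ)=H^*(PGL_{n+1})\cdot H^*(\M_{1,k_0},\pm\QQ)$ directly in $K_0(MHS)$. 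Getzler's computation of the latter then supplies the top-weight modular-form contribution $H^*(\M_{1,1},H_{k_0})$. This single move replaces both your stratification argument and your stabilizer analysis; the $PGL_{n+1}$ factor appears because the marked points (not the cubic alone) kill the residual $PGL_3$-ambiguity.
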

\begin{proof}
    By Proposition \ref{Yd}, up to dimension $d(\frac{3n}{ 2} + \epsilon)$ $Y(d)  $ 
    $$=\sum_{l >0, l \textrm{ odd}}\sum_{|\lambda| < N} \sum_{\alpha\subseteq \lambda, |\alpha| = k_1(l)}((-1)^{||\lambda||})\{(Z_1,Z_2,C)| C \in M, Z_1 \in w_{\alpha}(C), Z_2 \in w_{\lambda \setminus \alpha} (\PP^n \setminus C)\}  \Lb^{-(n+1)|\lambda|}(\Lb^l -1).$$

    To finish the proof of the Proposition it suffices to compute the image  of  the above expression in $K_0(MHS)$ . 
    We note that the image equals $\sum_{l,k}H^*(w_{[k]}^{M,l}, \pm \QQ) \Lb^{-n|\lambda| + l} $. 

    For $l=1$, $\sum_{k}H^*(w_{[k]}^{M,1}, \pm \QQ) \Lb^{-nk + l}$ has highest weight term equal to that of $H^*(w_{k_0}^{M,1}, \pm \QQ)$, where $k_0 = \frac{3d+1}{2}$, there are other terms corresponding to higher values of $k$, but we will establish (after first dealing with the leading term)that these have lower weight. This highest weight term corresponds to the situation where we have $k_0$ points on a plane cubic $C$, in this situation being singular on such a collection of points imposes $(k_0)(n+1) -1$ linear conditions.

    Now $w_{k_0}^{M,1} = \{(Z,C)\in w_{k_0}\times M| Z \subseteq C \}$(by Proposition \ref{h1list}) and hence $w_{k_0}^{M,1} / PGL_{n+1}(\CC) = \M_{1,k}$. So  we have,
    
 $H^*(w_{k_0}^{M,1}, \pm \QQ) = H^*(PGL_{n+1}, \QQ) \sum_{l,n}H^*(\M_{1,k_0}, \pm \QQ) $. By  the results of Section 5 of \cite{G} (also see Corollary 2.8 in \cite{G2} )      the highest weight term of this equals that of $$H^*(PGL_{n+1}(\CC), \QQ)H^*(\M_{1,1}, H_{k_0}),$$ we note that this then implies that $H^*(w_{[k_0]}^{M,1}, \pm \QQ) \Lb^{-nk_0 + 1}$ has the required highest weight, see \cite{G} for instance.

We will now establish that for $k> k_0$, the term $H^*(w_{[k]}^{M,1}, \pm \QQ) \Lb^{-(n+1)k + l}$ has lower weight than for $k =k_0$. To see this let $\bar w_{[k]}^{M ,1} = \{(Z,C) \in w_{[k]} | h^1(\PP^n ,I^2 _{Z \cap C}(d)) =1\}$. We note that $\bar w_{[k]}^{M ,1} \setminus w_{[k]}^{M ,1}$ is of small dimension by Proposition \ref{ge4}. Thus it suffices to establish that  $H^*( \bar w_{[k]}^{M,1}, \pm \QQ) \Lb^{-nk + l}$ has lower weight. But we note that by Proposition \ref{h1list} $\bar w_{[k]}^{M,1} = \{(Z,C) \in w_{[k]} \times C | |Z \cap C| = k_0\}.$ Now we have a fibration $w_{[k]}^{M,1} \to M $ with fiber isomorphic to $w_{[k_0]}(C) \times w_{k-k_0}(\PP^n \setminus C)$. We have a further fibration (in the orbifold sense) $M \to M / PGL_{n+1}(\CC).$ The fiber of the composite map $w_{[k]}^{M,1} \to M / PGL_{n+1}(\CC)$ is seen to be a copy of  $PGL_{n+1}(\CC) \times w_{[k_0]}(C) \times w_{k-k_0}(\PP^n \setminus C).$

We may thus use the Leray spectral sequence to bound the weights of $H_*^c(w_{[k]}^{M,1}), \pm \QQ)$ in terms of the dimension of $M/PGL_{n+1}(\CC)$ (which is complex 1 dimensional and non compact) and the weights of $H_*^cPGL_{n+1}(\CC) \times w_{[k_0]}(C) \times w_{k-k_0}(\PP^n \setminus C))$. The highest weight term of the latter is $((n+1)^2 - 1 ) + k_0 +1 + (2n-1)(k-k_0) +1$ (this uses a computation of the Borel-Moore homology of configuration spaces- see Corollary 7.2.6 of \cite{DH} for instance).

Thus the highest weight term of $H_*^c(w_{[k]}^{M,1}), \pm \QQ)$ is at most $1  +((n+1)^2 -1) + k_0 + 1 + (2n-1)(k-k_0)$.
Thus the highest weight of $$H_*^c(w_{[k]}^{M,1}), \pm \QQ)\Lb^{-k(n+1) +1}$$ is at most $1  +((n+1)^2 -1) + k_0 + 1 + (2n-1)(k-k_0) + 2 - 2k(n+1)$. If we subtract this quantity from $2(- k_0(n+1) +1) +k_0 + 1 + ((n+1)^2 -1)$  we obtain:
$$3(k-k_0)- 1 $$ which is always positive for $k>k_0$. Thus the possible weights are lower than the highest term.

 We note that for $l>1$ a similar computation shows that all the corresponding terms  $H^*(w_{[k]}^{M,l})\Lb^{-nk+ l}$ are of smaller weight and we may ignore them for the purpose of finding the highest weight term.

 This establishes the result.

\end{proof}

\begin{prop} \label{geN0}
    Let $d \gg n$. Let $L$ be a fixed line in $\PP^n$. Then there exists, $\epsilon > 0$  such that   for $N= (\frac{3n}{2} + \epsilon)d$, we have the following inequalities/equalities.
    \begin{enumerate}
        \item$$\{(f,Z) \in  W_{\lambda, \ge N}^{l,n}|  \dim \Sing(f)= 0 \} $$ is of dimension $\le h^0(\PP^n,\OO(d)) +  (- \frac{3n}{2} -\epsilon)d$
        \item $W_{\lambda, \ge N}^{0,n}$is of dimension $\le h^0(\PP^n,\OO(d)) +  (- \frac{3n}{2} -\epsilon)d$
     
        \item $$\{(f,Z) \in  W_{\lambda, \ge N}^{l,n}|  \Sing(f) \textrm{contains a curve of degree} \ge 3\} $$ is of dimension $\le h^0(\PP^n,\OO(d)) +  (- \frac{3n}{2} -\epsilon)d$.
        \item $$W_{\lambda, \ge N}^{l,n} = W_{\lambda, \ge N}^{l,n} \cap \{(f,Z) | C \subseteq \Sing (f), C \textrm{ is a curve of degree } 
        \le 2\}$$
        $$ = \{(f,L,Z)| (f,Z)\in W_{\lambda, \ge N}^{l,n}, L \subseteq \Sing(f), L \textrm{ is a line} \}$$
        $$+ \{(f,C,Z)| (f,Z)\in W_{\lambda, \ge N}^{l,n}, C \subseteq \Sing(f) , C \textrm{is an irreducible conic}\}$$
        
        $$- \{(f,L_1,L_2,Z)| (f,Z)\in W_{\lambda, \ge N}^{l,n}, L_i \subseteq \Sing(f) 
 L_i \textrm {are distinct lines}\},$$  up to dimension $\le h^0(\PP^n,\OO(d)) +  (- \frac{3n}{2} -\epsilon)d.$
       
        \item$\sum_{|\lambda|< N} (-1)^{||\lambda||}  \{(f,L_1,L_2,Z)| (f,Z)\in W_{\lambda, \ge N}^{l,n}, L_i \subseteq \Sing(f) \} = 0$ up to dimension $\le h^0(\PP^n,\OO(d)) +  (- \frac{3n}{2} -\epsilon)d$. 
        \item $\sum_{|\lambda| < N} (-1)^{||\lambda||}  \{(f,C,Z)| (f,Z)\in W_{\lambda, \ge N}^{l,n}, C \subseteq \Sing(f) \} = 0$ up to dimension $\le h^0(\PP^n,\OO(d)) +  (- \frac{3n}{2} -\epsilon)d$. 
        \item $\sum_{|\lambda|< N} (-1)^{||\lambda||}  \{(f,L,Z)| (f,Z)\in W_{\lambda, \ge N}^{l,n}, L \subseteq \Sing(f) 
 L \textrm{ is a line}\} = 0$ up to dimension $\le h^0(\PP^n,\OO(d)) +  (- \frac{3n}{2} -\epsilon)d$. 

    \end{enumerate}

\end{prop}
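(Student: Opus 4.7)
The plan is to prove the seven claims in order. Parts (1)--(3) are dimension bounds derived from Proposition \ref{affX} and Proposition \ref{h1N}. Part (4) is a formal inclusion-exclusion consequence of (1) and (3). Parts (5)--(7) follow the pattern of Proposition \ref{lowdeg} via the $\pm\QQ$-trick of Proposition \ref{pm}; in fact (5) and (6) will follow essentially for free from the codimension estimates developed in (3).

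For (1), if $\dim\Sing(f)=0$ and $|\Sing(f)|\ge N=(3n/2+\epsilon)d$, then applying Proposition \ref{affX} to $\Sing(f)$ itself shows that the subspace of such $f$ has codimension at least $\sim (n+1)N$ in $H^0(\PP^n,\OO(d))$, which comfortably exceeds $(3n/2+\epsilon)d$ after adding the dimension of the $w_\lambda$ parameter. Part (2) is the same argument applied to the $l=0$ stratum, where by definition the conditions imposed by $Z\subseteq\Sing(f)$ are already linearly independent. For (3), I stratify by the degree $r\ge 3$ of a curve $C\subseteq\Sing(f)$ and its component of $\mathrm{Chow}(r)$; Proposition \ref{h1N} combined with Serre vanishing and Riemann--Roch on $C$ shows that $C\subseteq\Sing(f)$ imposes $\ge rnd-O_{r,n}(1)$ independent conditions on $f$, which for $r\ge 3$ beats $(3n/2+\epsilon)d$ even after adding $\dim\mathrm{Chow}(r)$ and $\dim w_\lambda$.

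Part (4) is then inclusion-exclusion. By (1)--(3), modulo the claimed error, the one-dimensional part of $\Sing(f)$ has total degree at most two, so it is either a single line, a single irreducible conic, or an unordered pair of two distinct lines. Summing over the lines contained in $\Sing(f)$ gives each of these three cases the required total weight $+1$: a single line contributes $+1$ in the line piece; an irreducible conic contributes $+1$ in the conic piece; a pair of lines contributes $+2$ in the line piece and $-1$ in the pair piece. This produces the claimed three-term formula up to the stated error.

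For (7) I fix a Zariski locally trivial stratum $M$ of $\mathrm{Chow}(1)$ and a line $L\in M$, then use local triviality to reduce the alternating sum to the product of $[M]$ with the analogous sum for the fixed $L$. The space of pairs $(f,Z)$ with $L\subseteq\Sing(f)$ stratifies by the subpartition $\alpha\subseteq\lambda$ specifying which blocks of $Z$ land on $L$, with each stratum a vector bundle over $w_\alpha(L)\times w_{\lambda\setminus\alpha}(\PP^n\setminus L)$. By Proposition \ref{polyinL} the resulting class is polynomial in $\Lb$, so vanishing of the alternating sum over $\lambda$ can be checked in $K_0(MHS)$; by Proposition \ref{pm} the alternating sum in $\lambda$ of the on-$L$ factor reduces to $H_c^{*}(w_{|\alpha|}(L\setminus S),\pm\QQ)$ for a finite $S\subset L$, which vanishes for $|\alpha|>3$. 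The finitely many small-$|\alpha|$ strata force at least $|\lambda|-3$ points of $Z$ to lie off $L$, each costing $(n+1)$ independent conditions on $f$ by Proposition \ref{affX}, which pushes those strata into the error range. Parts (5) and (6) follow the same argument but are in fact easier: each individual stratum already has codimension at least $2nd-O(n)$ (coming from a conic or a pair of lines in $\Sing(f)$, by the same Riemann--Roch estimate as in (3)), so each lies in the error range without needing the $\pm\QQ$ cancellation at all. The main obstacle I expect is the sharp codimension count in part (3) for singular degree-three curves --- in particular nodal or cuspidal plane cubics --- where the naive Riemann--Roch estimate must be replaced by a careful computation on the normalization in the style of Proposition \ref{curveCB}, together with the need to ensure that the $n$-superscript (non-forced) hypothesis rules out $Z$-configurations whose geometry conspires to undercut the expected codimension.
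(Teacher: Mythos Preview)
Your argument has genuine gaps in parts (1), (5), (6), and (7), and your self-identified ``main obstacle'' (singular cubics in (3)) is not where the difficulty lies.

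For (1), invoking Proposition \ref{affX} on $\Sing(f)$ does not give codimension $\sim(n+1)N$; that proposition yields only $(m+1)(n+1)+n-1$ where $m=\dim\mathrm{phull}(\Sing(f))$, which can be tiny. The paper instead uses Lemma \ref{contcurve}: for $l>0$ the points of $Z$ concentrate on a curve $C$ with $|Z\cap C_k|\ge(\deg C_k)d/2-O(1)$, while the hypothesis $\dim\Sing(f)=0$ forces $C_k\not\subseteq\Sing(f)$ and hence $|Z\cap C_k|\le(\deg C_k)d+K$. It is this two-sided squeeze that produces the dimension bound.

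For (5) and (6), your claim that each stratum is already in the error range is false. Take (6) with all $|\lambda|$ points of $Z$ on the conic $C$: the stratum has dimension roughly $h^0-2nd+|\lambda|+\dim\mathrm{Chow}(2)$, and for $|\lambda|$ near the cap allowed by the $n$-superscript (about $2d$) this exceeds $h^0-(3n/2+\epsilon)d$ when $n\le 4$. The paper treats (5) and (6) by the same alternating-sum cancellation as (7), not by a direct codimension estimate. The same issue infects your argument for (7): for $|\alpha|\in\{0,1,2\}$ the on-$L$ cohomology $H^*_c(w_{|\alpha|}(\PP^1),\pm\QQ)$ does not vanish, and the off-$L$ conditions do \emph{not} push those strata into the error range when $|\lambda|$ is small (e.g.\ $|\alpha|=0$, $|\lambda|=0$ gives dimension $h^0-nd$). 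The paper handles this by an additional identity you are missing: $\sum_{k\ge 0}H^*_c(w_k(\PP^1),\pm\QQ)=0$, which is the image of $Z_{\PP^1}^{-1}(0)=0$ in $K_0(MHS)$. This forces the three surviving terms $k=0,1,2$ to cancel one another after the off-$L$ factor is pulled out, up to a boundary error of size $\le h^0-(3n/2+\epsilon)d$. Without this second layer of cancellation your argument for (7) does not close, and the same mechanism is what actually proves (5) and (6).
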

\begin{proof}
We begin by proving (1). It suffices to prove (1) in the case when $|\lambda| =N$.
By Proposition \ref{contcurve} if $l > 0$, then associated to any  $Z \in W_{\ge \lambda}^{n,l}$ is a unique reduced curve $C$ such that $l = h^1(\PP^n, I^2_Z(d)) = h^1(\PP^n, I^2_{Z \cap C}(d))$. Furthermore, if the components of $C$ are $C_1, \dots C_k$ then $\frac{\deg(C_k)d - g(C_k) }{2} \le |Z \cap C_k| $   as otherwise $2|Z| \cap C_k$ imposes linearly independent conditions which is not possible (see Proposition \ref{contcurve}).  Also, there is some constant $K$ depending on $\deg(C_k)$ such that if  $ |Z \cap C_k| > \deg(C_k) d + K  ,$  a section singular at $Z$ is singular on all of $C_k$. if that happens $Z \not \in W_{\ge \lambda}^{n,l}$.  Thus we must have $  |Z \cap C_k| < \deg(C_k) d + K$.

Furthermore there is some number $K'$ such that , $|l - (n-1)\sum_k (|Z \cap C_k|) | \le K'$(Refer to Proposition \ref{h1bound}). We can therefore bound both the dimension of the space of such $Z$ and $l$, which gives us (after a computation):

, $$\dim \{(f,Z)|  \dim \Sing(f)= 0 \} \cap W_{\lambda, \ge N +1}^{l,n} \le h^0(\PP^n, \OO(d))+( - \frac{3n}{2} - \epsilon )d.$$

Part (2) is immediate, since $$ \dim W_{\lambda, \ge N +1}^{0,n} \le \dim W_{\ge [N]}^{0,n}  \le  h^0(\PP^n, \OO(d))- N .$$ 

Let us now proceed to prove  part (3). 
We first begin with a claim:

\textbf{Claim}: For any fixed curve $C$ of degree $\ge 3$ and a fixed curve $C'$ of degree $r'$ possibly empty, there exists $\epsilon'>0$, such that $$\dim \{ (f,Z) \in W_{\ge \lambda}^{l,n} | C \subseteq \Sing(f),  h^1(\PP^n, I^2_Z(d)) = h^1(\PP^n, I^2_{Z \cap (C\cup C')}(d))\} \le h^0(\PP^n, \OO(d)) - (\frac{3n}{2} + \epsilon')d$$, here we require $\epsilon'$ to be independent of $C', r'$.

To prove the claim, we let $$A =\{(f,Z)|  \Sing(f) \textrm{contains a curve of degree} \ge 3\} \cap W_{\lambda, \ge N}^{l,n} $$ and $\tilde A = \{(f,Z, C)|(f, Z) \in A , C \textrm{ is a curve of degree} \ge 3, C\subseteq \Sing(f)\}.$ We note that $\dim \tilde A \ge \dim A$.  Then $$\dim \tilde A \le \sup_{r \ge 3,C \textrm{is a curve of degree } r} (\dim Chow(r) + \dim \{(f,Z)| C \subseteq \Sing(f) \} \cap W_{\lambda, \ge N}^{l,n}).$$ 
Let us first argue that the claim implies (4). We will establish the claim afterwards.
Let $B_{r'}(C) $ be the subset of $W_{\ge \lambda}^{l,n} \times Chow (r')$ consisting of all $(f,Z, C')$ such that  $C \subseteq \Sing(f) $ and $C'$ is the curve of minimal degree such that $h^1(\PP^n, I^2_Z(d)) = h^1(\PP^n, I^2_{Z \cap (C\cup C')}(d)) \}.$

Now we use Proposition \ref{contcurve} to argue that $$\dim \{(f,Z)| C \subseteq \Sing(f) \} \cap W_{\lambda, \ge N +1}^{l,n}) \le \sup_{r'} (\dim Chow(r') + \dim B_{r'}(C).$$ Note that in the above two expressions, the supremums are actually maximums for a given $|\lambda|$, there are only finitely many $r,r'$ for which the sets $B_{r'}(C)$ can be nonempty. Further, we note that for $d \gg 0$, $\epsilon/ 10 d \gg \dim Chow(r)$ for $r$ in the relevant range. Thus if we establish that $\dim B_{r'}(C) \le  h^0(\PP^n, \OO(d)) - (\frac{3n}{2} + \epsilon)d$ we would be done.

This concludes the proof of (4) assuming the claim, so we will now proceed to establish  the claim. 

For this we note the following- for $d \gg \deg(C)$ the space $\{f \in H^0(\PP^n,\OO(d)) | C \subseteq \Sing(f) \}$ is of codimension $h^0(  N(C), \OO(d))$ in $H^0(\PP^n,\OO(d))$. We know that $h^0(  N(C), \OO(d)) \approx n\deg C d $ by Proposition \ref{h1bound}. For $(f,Z) \in  W_{\ge \lambda}^{C \cup C',l,n}$, let $Z_1 = Z \cap C$,$Z_2 = Z \cap C' \setminus Z_1$ $Z_3 = Z \setminus (Z_1 \cup Z_2)$.
We now have the following inequalities:
\begin{enumerate}
    \item $ \deg(C) d \ge|Z_1|   $.
    \item 
    We note that $|2|Z_2| - \deg(C')d - h^0(\PP^n, I^2_{Z_2}(d))|$ is bounded by a constant independent of $d$.
    \item $\{f| Z \cup C \subseteq \Sing(f)\}$ is of dimension  $h^0(\PP^n, \OO(d)) - h^0(  N(C), \OO(d) ) - h^1 (\PP^n, I^2_{Z'}(d)) -(n+1)|Z_3| - (n+1)|Z_2|$ in .
    \item  But the dimension of the space of $Z$ such that $(f,Z) \in W_{\ge \lambda}^{C \cup C',l,n}$ is less than $|Z_1| + |Z_2| + n|Z_3|$.
    \item  Hence the dimension of$$\{ (f,Z) \in W_{\ge \lambda}^{C \cup C',l,n} | C \subseteq \Sing(f)\} $$
    $$\le  |Z_1| + |Z_2| + n|Z_3| + h^0(\PP^n, \OO(d)) - h^0(  N(C), \OO(d) ) + h^1 (\PP^n, I^2_{Z_2}(d)) -(n+1)|Z_3| - (n+1)|Z_2|
    $$
    $$
     \le h^0(\PP^n ,\OO(d)) + \deg(C) d -n |Z_2|- h^0(N(C), \OO(d)) + h^1(\PP^n ,I^2_{Z_2}(d))
    $$
    $$
    \approx h^0(\PP^n ,\OO(d)) + (n-1)\deg(C) d - n|Z_2| + h^1(\PP^n, I^2_{Z_2}(d)) 
    $$
    $$
    \le  h^0(\PP^n ,\OO(d)) + (n-1)\deg(C) d 
    $$
    
    $$\le h^0(\PP^n, \OO(d)) - (\frac{3n}{2} + \epsilon)d .$$
\end{enumerate}

For part (4), we note that by (1), $$W_{\lambda, \ge N+1}^{l,n} =  X$$ up to dimension $ h^0(\PP^n, \OO(d)) - (\frac{3n}{2} + \epsilon)d ,$ where 
$$X := \{(f,Z) \in  W_{\lambda, \ge N +1}^{l,n} |  \Sing(f) \textrm{contains a curve of degree} \le 2 \textrm{and does not contain a curve of degree} \ge 3\}. $$ 
Standard inclusion-exclusion techniques then  imply that the motive of $$X =\{(f,L,Z)| (f,Z)\in W_{\lambda, \ge N + 1}^{l,n}, L \subseteq \Sing(f) \} + \{(f,C,Z)| (f,Z)\in W_{\lambda, \ge N +1}^{l,n}, C \subseteq \Sing(f) \} $$
$$- \{(f,L_1,L_2,Z)| (f,Z)\in W_{\lambda, \ge N +1}^{l,n}, L_i \subseteq \Sing(f) \}.$$ up to dimension$ h^0(\PP^n, \OO(d)) - (\frac{3n}{2} + \epsilon)d $.



We will omit the proof of parts (5) and (6) because their proofs are similar to that of part (7) (which we will not omit) and in fact a bit simpler.
For part (7), we note that the family of lines over $Chow(1)$ is Zariski locally trivial. Hence $\{(f,L,Z)| (f,Z)\in W_{\lambda, \ge N + 1}^{l,n}, L \subseteq \Sing(f) \}$ defines a locally trivial family over $Chow(1)$ and thus,

$$\{(f,L,Z)| (f,Z)\in W_{\lambda, \ge N + 1}^{l,n}, L \subseteq \Sing(f) \} = Chow(1)\{(f,Z)| (f,Z)\in W_{\lambda, \ge N +1}^{l,n}, L \subseteq \Sing(f) \} $$ in $\M$ (Zariski locally trivial families split into products in $\M$).

We now make the following claim.

\textbf{Claim:} $$ \sum_{|\lambda|= k} (-1)^{||\lambda||}\{(f,Z)| (f,Z)\in W_{\lambda, \ge N + 1}^{l,n}, L \subseteq \Sing(f) \}$$ 
$$ =  \sum_{|\lambda|= k} (-1)^{||\lambda||} \sum_{\alpha \subseteq \lambda} w_{\alpha}(L)w_{\lambda \setminus \alpha} (\PP^n \setminus L) \Lb^{-(n+1)(|\lambda|- |\alpha| - h^0(N(L), \OO(d))}$$ up to dimension $h^0(\PP^n, \OO(d))- (\frac{3n}{2}+ \epsilon )d$, i.e. the sum is well approximated by the  corresponding sum when we ignore the fact that points in $\PP^n \setminus L$ need not impose linearly independent conditions on $H^0(\PP^n, I^2_L(d))$. 

The proof of the claim is essentially a dimension computation similar to those we've already seen, if a collection of points $Z$ fails to impose linearly independent conditions on $H^0(\PP^n, I^2_L(d))$, then a large number of points of $Z$ lie on a curve, either the curve is  of degree $\ge 2$ and the dimension of the resulting error is small (see Proposition \ref{ge4}) or the curve is a line and the alternating sum vanishes for the same reason as in Proposition \ref{lowdeg}. We will first establish (7) assuming the claim.

To do so we must simply establish that $$\sum_{|\lambda|< N} (-1)^{||\lambda||} \sum_{\alpha \subseteq \lambda} w_{\alpha}(L)w_{\lambda \setminus \alpha} (\PP^n \setminus L) \Lb^{-(n+1)(|\lambda|- |\alpha| - h^0(N(L), \OO(d))} = 0$$ up to dimension $h^0(\PP^n, \OO(d))- (\frac{3n}{2}+ \epsilon )d.$ It is clear that the sum is a polynomial in $\Lb$ and thus it suffices to establish that its image in $K_0(MHS)$ is of sufficiently low weight.
Furthermore by Proposition \ref{pm} the image of the above expression in $K_0(MHS)$ is 
$$ \Lb^{-h^0(N(L), \OO(d))}\sum_{k < N}\sum_{k' \le k_1} H_*^c(w_{k_1}(\PP^1)\times w_{k-k_1}(\PP^n \setminus \PP^1), \pm \QQ)\Lb^{-(k-k_1)(n+1)}.$$

To proceed with the proof of (7) we will need the following identity:
$$
\sum_{k= 0}^{\infty} H_*^c(w_k(\PP^1), \pm \QQ) =0.
$$
To establish this we note that $\sum_{k= 0}^{\infty} H_*^c(w_k(\PP^1), \pm \QQ) $ is the image of $\sum_{\lambda} (-1)^{||\lambda||}w_{\lambda}(\PP^1) = Z_{\PP^1}^{-1}(0) =0$. Thus we have established the previous identity. 

We also note that in the sum $\sum_{k= 0}^{\infty} H_*^c(w_k(\PP^1), \pm \QQ) $  the only relevant terms are for $k =0,1,2$ as all other terms vanish by Lemma 1.2 of \cite{V}.

We now note that 

$$\Lb^{-h^0(N(L), \OO(d))}\sum_{k < N}\sum_{k' \le k} H_*^c(w_{k'}(\PP^1)\times w_{k- k'}(\PP^n \setminus \PP^1), \pm \QQ)\Lb^{-(k-k')(n+1)}$$

$$
= \Lb^{-h^0(N(L), \OO(d))} \sum_{k' =0}^2 \sum_{k = k'} ^N 
H_*^c(w_{k'}(\PP^1)\times w_{k- k'}(\PP^n \setminus \PP^1), \pm \QQ)\Lb^{-(k-k')(n+1)}
$$
$$
= \Lb^{-h^0(N(L), \OO(d))} \sum_{k' =0}^2 H_*^c(w_{k'}(\PP^1), \pm \QQ)\sum_{k'' = 0} ^{N -k'-1}
H_*^c( w_{k''}(\PP^n \setminus \PP^1), \pm \QQ)\Lb^{-(k'')(n+1)}
$$

$$
= \Lb^{-h^0(N(L), \OO(d))} \sum_{k' =0}^2 H_*^c(w_{k'}(\PP^1), \pm \QQ)\sum_{k'' = 0} ^{N -1}
H_*^c( w_{k''}(\PP^n \setminus \PP^1), \pm \QQ)\Lb^{-(k'')(n+1)}
$$
$$- \Lb^{-h^0(N(L), \OO(d))} \sum_{k' =0}^2 H_*^c(w_{k'}(\PP^1), \pm \QQ)\sum_{k'' = N -k'-1} ^{N-1}
H_*^c( w_{k''}(\PP^n \setminus \PP^1), \pm \QQ)\Lb^{-(k'')(n+1)}
.$$

By our previous computation the first term is 0. The second term is easily seen to be of dimension $\le -(\frac{3n}{2} + \epsilon )d$. This establishes the proof of (7) assuming the claim.
We will now prove the claim.

Let
$$X_{\lambda,j,l} = \{(f,Z)| (f,Z)\in W_{\lambda, \ge N}^{l,n}, L \subseteq \Sing(f) , |Z \cap L| =j\}$$

Let $Y_{\lambda, j,l} = \sum_{|\alpha| = j , \alpha \subseteq \lambda} w_{\alpha}(L)w_{\lambda \setminus \alpha} (\PP^n \setminus L) \Lb^{-(n+1)(|\lambda|- |\alpha|) - h^0(N(L), \OO(d))}$

We will establish that $\sum_{|\lambda| < N} (-1)^{||\lambda||}X_{\lambda,j,l} - Y_{\lambda,j,l}$ is of dimension $\le h^0(\PP^n ,\OO(d)) -N$ which will immediately give us the claim. 

We may further stratify $X_{\lambda,j,l}$ into the pieces $$X_{\lambda,j,l,k} = \{(f,Z) \in X_{\lambda,j,l} |  h^1(\PP^n, I^2_{Z \cup L}(d)) = h^0(N(L), \OO(d))+ (n+1)(|\lambda| -j) +k\},$$ i.e. $X_{\lambda,j,l,k}$ consists of those $Z$ such that vanishing to order 2 on $Z \cup L$ imposes $k$ fewer linear conditions than expected. Let $Y_{\lambda,j,l,k} = X_{\lambda,j,l,k} \LL^-k$. We note that $\sum_k Y_{\lambda,j,l,k} = Y_{\lambda,j,l}$.

We now note that it suffices to establish that  $\sum_{|\lambda|< N} (-1)^{||\lambda||}X_{\lambda,j,l,k}- Y_{\lambda,j,l,k}$ is of dimension $\le h^0(\PP^n ,\OO(d)) - N$. For $k =0$, the above is exactly $0$.

Let us assume $k >0$. We then  note that by an argument similar to that of Proposition \ref{contcurve} there must exist a curve $C'$ such that $h^1(\PP^n, I^2_{Z \cup L}(d))   =h^1( N(C' \cup L), I^2_{Z \cap (C) \cup L}(d)).$ Further more the above curve $C'$ is uniquely determined by $Z$ assuming it is of minimal degree.

Let $$X_{\lambda,j,l,k}^r \subseteq X_{\lambda,j,l,k} \times Chow(r) $$
 consist of all $(f,Z,C)$ such that $$h^1(\PP^n, I^2_{Z \cup L}(d))   =h^1( N(C \cup L), I^2_{Z \cap (C) \cup L}(d)),$$ and $C$ is of minimal degree.
Thus we have that $X_{\lambda,j,l,k} = \sum_r X_{\lambda,j,l,k}^r$

Let $Y_{\lambda,j,l,k}^r= X_{\lambda,j,l,k}^r\Lb^{-k}$. Note that $\sum_{k,r} Y_{\lambda,j,l,k}^r  = Y_{\lambda,j,l}.$ 

We now consider the difference $X_{\lambda,j,l,k}^r- Y_{\lambda,j,l,k}^r$. It suffices to establish for all fixed values of $j,l,k,r$ that $\sum_{|\lambda|< N} (-1)^{||\lambda||}X_{\lambda,j,l,k}^r- Y_{\lambda,j,l,k}^r$ is of dimension $\le h^0(\PP^n ,\OO(d)) - N$.

It is an easy computation to note that for $ r \ge 2$ the above difference is of dimension $\le h^0(\PP^n ,\OO(d)) -N$. Also for $r =0$, the above term always vanishes. Thus the only  case that we must consider is when $r =1$. Let $M_1$ denote the space of all lines in $\PP^n$ intersecting $L$ at a single point. Let $M_2$ denote the space of all lines in $\PP^n$ not intersecting $L$. Let $$X_{\lambda,j,l,k}^{M_i} = \{(f,Z ,L') \in X_{\lambda,j,l,k}^1, L' \in M_i\}.$$ Let $$Y_{\lambda,j,l,k}^{M_i} = X_{\lambda,j,l,k}^{M_i}\Lb^{-k}.$$

Let $$\bar X_{\lambda,j,l,k}^{M_i}= \{(f,Z,L') \in X_{\lambda,j}|h^1( L' \cup L, I^2_{Z \cap (L') \cup L}(d)) =k \}.$$ Let $$\bar Y_{\lambda,j,k}^{M_i} = \bar X_{\lambda,j,k}^{M_i} \Lb^{-k}.$$

We claim that:

$\bar X_{\lambda,j,l,k}^{M_i} -  X_{\lambda,j,l,k}^{M_i}$ and $\bar Y_{\lambda,j,l,k}^{M_i} -  Y_{\lambda,j,l,k}^{M_i}$ is of dimension $\le h^0(\PP^n, \OO(d)) - N.$  This follows from the fact that these differences are contained in the union of $ X_{\lambda,j,l,k}^{r}$ for $r \ge 2$ and we have already remarked that these have dimension $\le h^0(\PP^n, \OO(d)) - N. $.

Thus it suffices to establish that $$\sum (-1)^{||\lambda||} \bar X_{\lambda,j,l,k}^{M_i} = \sum (-1)^{||\lambda||} bar Y_{\lambda,j,l,k}^{M_i} =0.$$

To do so we will explicitly describe the sets $\bar X_{\lambda,j,l,k}^{M_1}$ and $\bar X_{\lambda,j,l,k}^{M_2}$.

We note that we have a Zariski locally trivial bundle $\bar X_{\lambda,j,l,k}^{M_2} \to Chow(1)$ defined by $(f,Z, L') \mapsto L'$. Let us denote the fibre of this map by $F_{\lambda,j,l,k}^{M_2}$, it is clear that $X_{\lambda,j,l,k}^{M_2} = Chow(1) F_{\lambda,j,l,k}^{M_2}$ The fibre over $L'$ consist of those $Z$ such that  $|Z \cap L| =j$ and $h^1(\PP^n, I^2_{Z\cap L' \cup L}(d)) = k+  h^1(\PP^n, I^2_L(d))$ or equivalently $h^1(\PP^n,I^2_{Z \cap }{d}) = k$. But as is established in Proposition \ref{h1list}, this just implies that $|Z \cap L| = f(k)$ for some number $f(k)$ depending on $k$. We may then apply Proposition \ref{polyinL} to conclude that  $F_{\lambda,j,l,k}^{M_2}$ is a polynomial in $\Lb$ and it suffices to establish that the image of $\sum_{|\lambda|<N} (-1)^{||\lambda||} \bar F_{\lambda,j,l,k}^{M_2}$ is zero in $K_0(MHS)$.

But by Proposition \ref{pm} the image of  $\sum (-1)^{||\lambda||} \bar F_{\lambda,j,l,k}^{M_i}$ in $K_0(MHS)$ is $$\sum_{i= 0} ^N H_*^c(w_j(L)\times w_{f(k)} (L') \times w_{i- j- f(k)}(\PP^n \setminus L \setminus L') , \pm,\QQ),$$ where in all relevant cases $f(k) \ge \frac{d}{2} $ which since $d$ is assumed to be large is greater than $2$. Thus by Lemma 1.2 in \cite{V} the above cohomology group vanishes and we have the desired equality.

Similarly $$\sum_{|\lambda|< N } (-1)^{||\lambda||} \bar X_{\lambda,j,l,k}^{M_1} =0.$$

   and $$\sum_{|\lambda|< N } (-1)^{||\lambda||} \bar Y_{\lambda,j,l,k}^ =0.$$ This concludes the proof of (7).
\end{proof}
Finally we prove Theorem \ref{mainPn}.
\begin{proof}[Proof of \ref{mainPn}]
     Let $\epsilon > 0$ be the largest possible $\epsilon$ satisfying the conclusions of Propositions \ref{ge4} -\ref{lowdeg}. Let $N = (\frac{3n}{2} + \epsilon) d$. By Proposition \ref{propVW}, $$\frac{U(\PP^n \OO(d))}{H^0(\PP^n, \OO(d)} = \frac {1}{H^0(\PP^n ,\OO(d)}\sum_{|\lambda| \le N} (-1)^{||\lambda||}W_{\ge\lambda} - (-1)^{||\lambda||}W_{\lambda, N + 1}$$ 
     $$= \frac {1}{H^0(\PP^n, \OO(d))} \sum_{l \ge 0}\sum_{|\lambda| \le N}(-1)^{||\lambda||} W^{l,n}_{\ge\lambda} - (-1)^{||\lambda||}W^{l,n}_{\lambda, N + 1}$$
     $$= \frac {1}{H^0(\PP^n, \OO(d))} \sum_{l \ge 1}\sum_{|\lambda| \le N}(-1)^{||\lambda||} (W^{l,n}_{\ge\lambda} - W^{l,n}_{\lambda, N + 1})$$
     
     $$+  \frac {1}{H^0(\PP^n, \OO(d))} \sum_{|\lambda| \le N} (-1)^{||\lambda||} (W^0_{\ge\lambda} - W^0_{\lambda, N + 1}) $$.
    Let us consider the first term 
    $$\frac {1}{H^0(\PP^n, \OO(d))} \sum_{l \ge 1}\sum_{|\lambda| \le N} W^{l,n}_{\ge\lambda} - W^{l,n}_{\lambda, N + 1}$$

    $$ = \sum_{ r \ge 1} \frac {1}{H^0(\PP^n, \OO(d))} \sum_{l \ge 1}\sum_{|\lambda| \le N} (-1)^{||\lambda||}W^{Chow(r),l,n}_{\ge\lambda} - (-1)^{||\lambda||} W^{Chow(r),l,n}_{\lambda, N + 1}$$
     By Proposition \ref{ge4} we may ignore the cases where $r \ge 4$ as the corresponding terms are of dimension $\le (-\frac{3n}{2} - \epsilon)d $. By Proposition \ref{lowdeg}, for $r=1,2$  $$\sum \frac {1}{H^0(X,\LL)} \sum_{l \ge 1}\sum_{|\lambda| \le N} (-1)^{||\lambda||}W^{Chow(r),l,n}_{\ge\lambda} $$ is also  of dimension $\le (-\frac{3n}{2} - \epsilon)d$. By Proposition \ref{geN0} we may ignore the terms $$\sum \frac {1}{H^0(X,\LL)} \sum_{l \ge 1}\sum_{|\lambda| \le N} (-1)^{||\lambda||}W^{Chow(r),l,n}_{\lambda, \ge N + 1} $$ as well.  The remaining term is the term  for $r =3 $, i.e. 
     $$ \frac {1}{H^0(\PP^n,\OO(d))} \sum_{l \ge 1}\sum_{|\lambda| \le N} W^{Chow(3),l,n}_{\ge\lambda} - W^{Chow(3),l,n}_{\lambda ,\ge N  +1}.$$ Proposition \ref{geN0} implies that in   $\le (-\frac{3n}{2} - \epsilon)d $ the above equals 
     $$ \frac {1}{H^0(\PP^n,\OO(d))} \sum_{l \ge 1}\sum_{|\lambda| \le N} W^{Chow(3),l}_{\ge\lambda}.$$

     We will now discuss the other term. $$\frac {1}{H^0(\PP^n, \OO(d))} \sum_{|\lambda| \le N} (-1)^{||\lambda||}W^{0,n}_{\ge\lambda} - W^{0,n}_{\ge  N +1} $$  $$= \frac {1}{H^0(\PP^n ,\OO(d))}\sum _{|\lambda| \le N} (-1)^{||\lambda||} w_{\lambda}^{0,n} \Lb^{-|\lambda|(n+1)} - W^{0,n}_{\lambda , \ge N +1} $$ 

     By Proposition \ref{geN0} the above equals
    $$ =\frac {1}{H^0(\PP^n, \OO(d))}\sum _{|\lambda| \le N} (-1)^{||\lambda||}  \Lb^{-|\lambda|(n+1)} w_{\lambda}- \sum_{r \ge 1 ,l \ge 1} w_{\lambda} ^{Chow(r),l}\Lb^{-|\lambda|(n+1)}$$

    By Proposition \ref{ge4}, we may disregard all terms involving $r \ge 4$ as they contribute in dimension $\le (-\frac{3n}{2} - \epsilon)d $. Similarly, by Proposition \ref{lowdeg}, we may disregard $r=1,2$.  Hence the above equals
    $$ \frac {1}{H^0(\PP^n ,\OO(d))}\sum _{|\lambda| \le N} (-1)^{||\lambda||}  \Lb^{-|\lambda|(n+1)} w_{\lambda}- \sum_{l\ge 1} w_{\lambda} ^{Chow(3),l}.$$ By Proposition \ref{propVW}

    $$\frac {1}{H^0(\PP^n ,\OO(d))}\sum _{|\lambda| \le N} (-1)^{||\lambda||}  \Lb^{-|\lambda|(n+1)} w_{\lambda} = \sum Z_{\PP^n}^{-1}(n+1)$$ up to dimension $(-\frac{3n}{2} - \epsilon)d$.

     Hence $$ \frac {1}{H^0(\PP^n, \OO(d)} \sum_{|\lambda| \le N} (-1)^{||\lambda||}W^0_{\ge\lambda}$$
     
     $$=  Z_{\PP^n}^{-1}(n+1) - \frac{1}{H^0(\PP^n,\OO(d))} \sum_{|\lambda| \le N}\sum_{l \ge 1} (-1)^{||\lambda||}w_{\lambda} ^{Chow(3),l}\Lb^{-n|\lambda|} $$ up to dimension $(-\frac{3n}{2} - \epsilon)d$ .  Note that this latter term (i.e. $$-\frac{1}{H^0(\PP^n,\OO(d))} \sum_{|\lambda| \le N}\sum_{l \ge 1} (-1)^{||\lambda||}w_{\lambda} ^{Chow(3),l}\Lb^{-n|\lambda|}$$) added to the term $$\frac{1}{H^0(\PP^n,\OO(d))} \sum_{|\lambda| \le N}\sum_{l \ge 1} (-1)^{||\lambda||}w_{\lambda} ^{Chow(3),l}\Lb^{-n|\lambda| +l}$$ (which arose earlier) is $Y(d)$.

    Putting this altogether, we have the require equality.



     
\end{proof}

\end{document}